\setlist{leftmargin=5mm}
	\def\MR#1{}
\numberwithin{equation}{section}
\newcommand{\R}{\mathbb{R}}
\newcommand{\RR}{\mathbb{R}}
\newcommand{\E}{\mathbb{E}}
\newcommand{\Prob}{\mathbb{P}}
\newcommand{\td}{\widetilde}
\renewcommand{\cal}{\mathcal}
\newcommand{\var}{\text{Var}}
\newcommand{\Mod}{\text{Mod}}
\newcommand{\KL}{\text{KL}}
\newcommand{\ora}{{\rm oracle}}
\newcommand{\pen}{\text{pen}}
\newcommand{\ada}{\text{adapt}}
\newcommand{\ldot}{\underline{\odot}}
\newcommand{\udot}{\overline{\odot}}
\newcommand{\pnorm}[2]{\lVert#1\rVert_{#2}}
\newcommand{\abs}[1]{\lvert#1\rvert}
\newcommand{\biggabs}[1]{\bigg\lvert#1\bigg\rvert}
\renewcommand{\epsilon}{\varepsilon}
\renewcommand{\d}[1]{\mathrm{d}#1}
\newcommand{\floor}[1]{\left\lfloor #1 \right\rfloor}
\newcommand{\ceil}[1]{\left\lceil #1 \right\rceil}
\newcommand{\beq}{\begin{equation}}
\newcommand{\eeq}{\end{equation}}
\newcommand{\beqa}{\begin{equation} \begin{aligned}}
\newcommand{\eeqa}{\end{aligned} \end{equation}}
\newcommand{\beqas}{\begin{equation*} \begin{aligned}}
\newcommand{\eeqas}{\end{aligned} \end{equation*}}
\newcommand{\bit}{\begin{itemize}}
	\newcommand{\eit}{\end{itemize}}
\newcommand{\bmat}{\begin{bmatrix}}
	\newcommand{\emat}{\end{bmatrix}}
\def\undertilde#1{\mathord{\vtop{\ialign{##\crcr
$\hfil\displaystyle{#1}\hfil$\crcr\noalign{\kern1.5pt\nointerlineskip}
$\hfil\tilde{}\hfil$\crcr\noalign{\kern1.5pt}}}}}
\theoremstyle{definition}\newtheorem{problem}{Problem}[section]
\theoremstyle{definition}
\theoremstyle{remark}
\theoremstyle{remark}\newtheorem{remark}[problem]{Remark}
\theoremstyle{definition}\newtheorem{example}[problem]{Example}
\theoremstyle{plain}\newtheorem{theorem}[problem]{Theorem}
\theoremstyle{plain}
\theoremstyle{plain}\newtheorem{lemma}[problem]{Lemma}
\theoremstyle{plain}
\theoremstyle{plain}\newtheorem{proposition}[problem]{Proposition}
\theoremstyle{plain}
\theoremstyle{plain}
\DeclarePairedDelimiter{\parr}{(}{)}
\DeclarePairedDelimiter{\co}{{\rm Coef}[}{]}
\DeclareMathOperator*{\argmin}{arg\,min}
\newcommand*\patchAmsMathEnvironmentForLineno[1]{%
	\expandafter\let\csname old#1\expandafter\endcsname\csname #1\endcsname
	\expandafter\let\csname oldend#1\expandafter\endcsname\csname end#1\endcsname
	\renewenvironment{#1}%
	{\linenomath\csname old#1\endcsname}%
	{\csname oldend#1\endcsname\endlinenomath}}%
\newcommand*\patchBothAmsMathEnvironmentsForLineno[1]{%
	\patchAmsMathEnvironmentForLineno{#1}%
	\patchAmsMathEnvironmentForLineno{#1*}}%
\begin{document}
	
\title{\LARGE On a phase transition in general order spline regression}

\date{\today}

\author{Yandi Shen\thanks{Department of Statistics, University of Washington, Seattle, WA 98105, USA. E-mail: \tt{ydshen@uw.edu} },~  Qiyang Han\thanks{Department of Statistics, Rutgers University, Piscataway, NJ 08854, USA. E-mail: \tt{qh85@stat.rutgers.edu}},~ and Fang Han\thanks{Department of Statistics, University of Washington, Seattle, WA 98105, USA. E-mail: \tt{fanghan@uw.edu}} }
\maketitle
	

%

%


\begin{abstract}
In the Gaussian sequence model $Y= \theta_0 + \varepsilon$ in $\RR^n$, we study the fundamental limit of approximating the signal $\theta_0$ by a class $\Theta(d,d_0,k)$ of (generalized) splines with free knots. Here $d$ is the degree of the spline, $d_0$ is the order of differentiability at each inner knot, and $k$ is the maximal number of pieces. We show that, given any integer $d\geq 0$ and $d_0\in\{-1,0,\ldots,d-1\}$, the minimax rate of estimation over $\Theta(d,d_0,k)$ exhibits the following phase transition:
\begin{equation*}
\begin{aligned}
\inf_{\td{\theta}}\sup_{\theta\in\Theta(d,d_0, k)}\E_\theta\pnorm{\td{\theta} - \theta}{}^2 \asymp_d
\begin{cases}
k\log\log(16n/k), & 2\leq k\leq k_0,\\
k\log(en/k), & k \geq k_0+1.
\end{cases}
\end{aligned}
\end{equation*}
The transition boundary $k_0$, which takes the form $\floor{(d+1)/(d-d_0)} + 1$, demonstrates the critical role of the regularity parameter $d_0$ in the separation between a faster $\log \log(16n)$ and a slower $\log(en)$ rate. We further show that, once encouraging an additional `$d$-monotonicity' shape constraint (including monotonicity for $d = 0$ and convexity for $d=1$), the above phase transition is eliminated and the faster $k\log\log(16n/k)$ rate can be achieved for all $k$. These results provide theoretical support for developing $\ell_0$-penalized (shape-constrained) spline regression procedures as useful alternatives to $\ell_1$- and $\ell_2$-penalized ones.
\end{abstract}



\section{Introduction}

\subsection{Overview}
Consider the regression model 
\begin{align}\label{eq:model}
Y_i = f_0(i/n)+ \epsilon_i, \qquad i=1,\ldots,n,
\end{align}
where $f_0:[0,1]\to\mathbb{R}$ is an unknown function and $\epsilon_i$'s are independent normal random variables with mean zero and variance $\sigma^2$. Throughout the paper, we reserve the notation $\theta_0$ for the truth in (\ref{eq:model}), i.e., $(\theta_0)_i \equiv f_0(i/n)$. The main goal of this paper is to study the approximation of $\theta_0$ by splines with free knots.

Consider the (generalized) spline space with the following three parameters: $d$, the degree of the spline; $d_0$, the level of continuity; $k$, the maximal number of pieces. More formally, $(d,d_0,k)$-splines are defined as (exact definition in Section \ref{sec:gen_spline}):
\begin{align}\label{def:spline_fcn_intro}
\Big\{&f:[0,1]\rightarrow \R: f \text{ has at most $k+1$ knots, is a degree $d$ polynomial} \\
&\text{  between knots, and is $d_0$-times differentiable at each inner knot}\Big\}. \notag
\end{align}
For any fixed degree $d$, $d_0$ takes value in $\{-1,0,\ldots,d-1\}$, with $d_0 = d-1$ being the smoothest case and $d_0 = -1$ allowing for discontinuity between pieces. To avoid degeneracy to global polynomials, we only consider the case $k\geq 2$ in this paper. The corresponding sequence space is defined as
\begin{align}\label{def:spline_seq_intro}
\Theta(d,d_0,k)\equiv \Big\{\theta\in\RR^n:\theta_i = f(i/n)\text{ for some  $(d,d_0,k)$-spline } f\Big\}.
\end{align}
Compared to splines in more classical settings \cite{de1978practical,
green1993nonparametric,
wahba1990spline}, the above parameter space does not fix the knots a priori and thus provides more flexibility. Previously, general order splines with free knots have been studied in, e.g., \cite{mammen1997locally,tibshirani2014adaptive,baranowski2019narrowest}.

Splines of the forms (\ref{def:spline_fcn_intro}) and (\ref{def:spline_seq_intro}) have frequently emerged in nonparametric curve estimation problems. For example, the classical smoothing splines \cite{wahba1990spline} arise from minimizing the least squares criterion with an $\ell_2$ roughness penalty. In the $\ell_1$ world, splines are closely related to total variation regularization or denoising studied in, e.g.,  \cite{rudin1992nonlinear,mammen1997locally,chen2001atomic,davis2001local,tibshirani2005sparsity,steidl2006splines,rinaldo2009properties,harchaoui2010multiple,hutter2016optimal,dalalyan2017prediction}. In recent years, these methods with the spline space (\ref{def:spline_seq_intro}) received a revival of interest under the name \emph{trend filtering}; cf. \cite{kim2009ell_1,tibshirani2014adaptive,wang2014falling,guntuboyina2020adaptive}. 

Despite the long history and large volume of works related to the spline spaces (\ref{def:spline_fcn_intro})-(\ref{def:spline_seq_intro}), their fundamental statistical limits have remained largely unexplored. Our first main result in this paper reveals the following intriguing phase transition in the minimax rate of estimation error over $\Theta(d, d_0, k)$:
\begin{equation}\label{eq:minimax_noshape_intro}
\begin{aligned}
\inf_{\td{\theta}}\sup_{\theta\in\Theta(d,d_0, k)}\E_\theta\pnorm{\td{\theta} - \theta}{}^2 \asymp_d
\begin{cases}
\sigma^2k\log\log(16n/k), & 2\leq k\leq k_0,\\
\sigma^2k\log(en/k), & k \geq k_0+1.
\end{cases}
\end{aligned}
\end{equation}
Here, $\|\cdot\|$ denotes the Euclidean norm and $\asymp_d$ denotes equivalence in order up to some positive constant that only depends on $d$. The transition boundary $k_0$, which takes the form $\floor{(d+1)/(d-d_0)}+1$ with $\floor{\cdot}$ denoting the floor function, governs the maximal number of pieces above which the optimal dependence of the estimation error on the sample size $n$ changes from the faster $\log\log (16n)$ rate to the slower $\log (en)$ rate. Notably, for any fixed degree $d$, $k_0$ is an increasing function of the regularity parameter $d_0$. In the two extreme cases, we have $k_0 = d+2$ if $d_0 = d-1$ (smoothest) and $k_0 = 2$ if $d_0 = -1$ (roughest). In other words, \emph{the driving factor behind the phase transition in (\ref{eq:minimax_noshape_intro}) is the regularity due to the differentiability structure encoded in $d,d_0$}.

The minimax rate in (\ref{eq:minimax_noshape_intro}) is achieved by the $\ell_0$-constrained spline least squares estimator (LSE) $\widehat{\theta}\equiv \widehat{\theta}(\Theta(d,d_0,k),Y)$, with $Y\equiv (Y_1,\ldots,Y_n)^\top$ and
\begin{align}\label{eq:lse}
\widehat{\theta}(\Theta, Y) \equiv \argmin_{\theta\in\Theta}\pnorm{Y - \theta}{2}^2\quad \text{ for any }\Theta\subset\RR^n.
\end{align}
In fact, a more general oracle inequality allowing for arbitrary model mis-specification can be proved for $\widehat{\theta}$. Due to the non-convexity of $\Theta(d,d_0,k)$, the solution to  \eqref{eq:lse} with $\Theta=\Theta(d,d_0,k)$ may not be unique and we choose any $\widehat{\theta}$ that achieves the minimum. Among the three parameters, we take $d$ and $d_0$ to be fixed in advance and consider $k$ as a tuning parameter to balance the approximation error of $\theta_0$ in \eqref{eq:model} by $\Theta(d,d_0,k)$ and the complexity of the latter space. The estimator in \eqref{eq:lse} with $\Theta = \Theta(d,d_0,k)$ can therefore be viewed as a class of $\ell_0$-splines in their constrained form.

The minimax rate in \eqref{eq:minimax_noshape_intro} and the rate-optimality of $\ell_0$-constrained spline LSE are interesting from at least two very different angles. First, the minimax rate in \eqref{eq:minimax_noshape_intro} is particularly useful in penalty selection for the adaptive version of the $\ell_0$-constrained spline LSE $\widehat{\theta}(\Theta(d,d_0,k),Y)$. Specifically, suppose $\theta_0\in\Theta(d,d_0,k^*)$ in \eqref{eq:model} with $d$ and $d_0 $ fixed in advance and an unknown $k^*$ on the number of pieces. Our aim is to find an adaptive version of $\widehat{\theta}$ that does not require the knowledge of $k^*$ but remains minimax optimal in estimation. Using the classical approach in \cite{birge1993rates,barron1999risk, birge2001gaussian,massart2007concentration}, this can be done by resorting to the penalized spline LSE $\widehat{\theta}_{\ada}$, where 
\begin{align}\label{eq:pen_lse_def_intro}
\widehat{\theta}_{\ada} &\equiv \widehat{\theta}(\Theta(d,d_0,\widehat{k}), Y)
\end{align}
with some data-driven $\widehat{k}$:
\begin{align}\label{eq:khat_intro}
\widehat{k}&\equiv \argmin_{1\leq k\leq n}\Big\{\pnorm{Y-\widehat{\theta}(\Theta(d,d_0,k),Y)}{}^2 + \pen(k;d,d_0)\Big\}
\end{align}
for some penalty function $\pen(\cdot; d,d_0)$. The estimator $\widehat{\theta}_\ada$ can thus be viewed as a class of $\ell_0$-penalized splines. Similar $\ell_0$-penalized procedures have previously been studied in \cite{kohler1999nonparametric,boysen2009consistencies,fan2018approximate,jewell2018exact}. When the penalty $\pen(\cdot; d,d_0)$ is chosen to be proportional to the minimax rate established in \eqref{eq:minimax_noshape_intro}, $\widehat{\theta}_{\ada}$ is guaranteed to be adaptively minimax optimal over $\Theta(d,d_0,k)$ for all values of $k$.

Second, \eqref{eq:minimax_noshape_intro} suggests some interesting comparison between $\ell_0$- and $\ell_1$-regularizers in spline regression. For expository purpose, let us consider the simplest piecewise constant class $\Theta(0,-1,k)$, where the transition boundary is given by $k_0 = 2$. There, while the $\ell_0$-constrained spline LSE, as defined in \eqref{eq:lse} with $\Theta = \Theta(0,-1,2)$, is able to achieve the faster $\log\log (16n)$ rate with $2$ pieces, the same rate has been proven to be un-attainable by the $\ell_1$ trend filtering, even with an additional minimum spacing condition that could be substantially improved with $\ell_0$-splines \cite{van2018tight,fan2018approximate, guntuboyina2020adaptive}. Computationally, unlike the context of sparse linear regression where the $\ell_0$ problem of best-subset selection is provably NP-hard \cite{natarajan1995sparse}, efficient dynamic programming algorithms do exist for implementing \eqref{eq:lse}, at least in the discontinuous case ($d_0 = -1$) \cite{auger1989algorithms,winkler2002smoothers,jackson2005algorithm,friedrich2008complexity} and the first-order continuous case ($d_0 = 0$) \cite{fearnhead2019detecting}.  Our results hence suggest that \emph{the $\ell_0$-constrained spline LSE could be an attractive alternative to its $\ell_1$ counterparts in spline regressions}.

To motivate the second main result of this paper, we recall the following minimax result from \cite{gao2017minimax}: for all $k\geq 2$,
\begin{align}\label{eq:minimax_GHZ_intro}
\inf_{\td{\theta}^*}\sup_{\theta^*\in\Theta^*(0,k)}\E_{\theta^*}\|\td{\theta}^* - \theta^*\|^2 \asymp \sigma^2k\log\log(16n/k),
\end{align}
where $\Theta^\ast(0,k)$ is the sub-class of $\Theta(0,-1,k)$ with non-decreasing signals. Comparing (\ref{eq:minimax_noshape_intro}) with $d=0,d_0=-1$ and (\ref{eq:minimax_GHZ_intro}) above, we see that the phase transition from the faster rate $\log\log(16n)$ to the slower rate $\log (en)$ in (\ref{eq:minimax_noshape_intro}) is eliminated in (\ref{eq:minimax_GHZ_intro}) under the additional monotonicity shape constraint. This raises the natural questions of whether a similar gain by shape constraints applies to higher-order splines, and if so, which type of shape constraints should be encouraged. As shape-constrained models repeatedly prove their usefulness in various applications, answering the above questions is of both practical and theoretical interests.

To this end, following  \cite{balabdaoui2007estimation,chatterjee2015risk}, we consider the following sub-class of $(d,d_0,k)$-splines with an additional `$d$-monotone' shape constraint (exact definition in Section \ref{sec:gen_shape}):
\begin{align}\label{eq:fun-class-intro}
\Big\{f:[0,1]\rightarrow\RR:& \text{ $f$ is a $(d,d-1,k)$-spline with non-decreasing}\\ 
&\text{highest-order polynomial coefficients}\Big\}. \notag
\end{align}
Two canonical examples are $d = 0$ and $1$, with the former corresponding to non-decreasing signals with at most $k$ constant pieces, and the latter corresponding to convex signals with at most $k$ linear pieces. Both classes have been extensively studied in the literature; cf. \cite{zhang2002risk,chatterjee2015risk,bellec2018sharp,gao2017minimax} for the case $d = 0$ and \cite{guntuboyina2013global,chatterjee2015risk,bellec2018sharp} for the case $d = 1$. Define the sequence space corresponding to \eqref{eq:fun-class-intro} as $\Theta^*(d,k)$. 

As a special case of our second main result, we show an analogue of (\ref{eq:minimax_GHZ_intro}) under the convexity (=1-monotone) shape constraint: for all $k\geq 2$,
\begin{align}\label{eq:minimax_shape_intro}
\inf_{\td{\theta}^*}\sup_{\theta^*\in\Theta^*(1,k)}\E_{\theta^*}\|\td{\theta}^* - \theta^*\|^2 \asymp \sigma^2k\log\log(16n/k).
\end{align}
The same upper bound actually holds for the general $d$-monotone class $\Theta^*(d,k)$, with a complementary lower bound showing that the $\log\log (16n)$ rate cannot be further improved even with only two pieces. Comparing (\ref{eq:minimax_noshape_intro}) and (\ref{eq:minimax_shape_intro}), it is hence clear that \emph{a higher-order `$d$-monotonicity' shape constraint eliminates the phase transition in (\ref{eq:minimax_noshape_intro}) for general $d$ in that the faster $k\log\log(16n/k)$ rate can now be achieved for all $k$.} The $d$-monotonicity therefore offers an attractive non-parametric sub-class $\Theta^*(d,k)$ of the general $\Theta(d,d-1,k)$ over which additional gain can be obtained in estimating the underlying signal. 

Finally, we remark on the technical challenges in proving (\ref{eq:minimax_noshape_intro}) and (\ref{eq:minimax_shape_intro}). Unlike the relatively straightforward proof of the $\log(en)$ part in \eqref{eq:minimax_noshape_intro}, the derivation of the correct transition boundary $k_0$ and the faster $\log\log(16n)$ rate requires non-trivial efforts from both analytical and probabilistic angles. The analytic step is to derive sharp enough controls for the magnitudes of the polynomial coefficients of signals in $\Theta(d,d_0,k)$ and $\Theta^\ast(d,k)$, which, in a certain sense, need be `tied' to either the left-most or the right-most knot of the signal. This is possible either due to the strong regularity inherited in the differentiability structure of $\Theta(d,d_0,k)$ for $k\leq k_0$, or to the global regularity within the $d$-monotonicity shape constraint. Once the above controls are obtained, a generalized version of the law of iterated logarithm (LIL), which we will develop in Section \ref{sec:lil}, can be applied to obtain the iterated logarithmic rates in (\ref{eq:minimax_noshape_intro}) and (\ref{eq:minimax_shape_intro}). 

The rest of the paper is organized as follows. Sections \ref{sec:gen_spline}
and \ref{sec:gen_shape} are devoted to the study of unshaped splines $\Theta(d,d_0,k)$ and shaped splines $\Theta^*(d,k)$, respectively. A general version of the LIL in expectation is developed in Section \ref{sec:lil}. Main proofs of the results are presented in Sections \ref{sec:noshape_upper} and \ref{sec:shape_upper}, with the remaining technical lemmas collected in the Appendix.

\subsection{Notation} 

For any $x\in \R$, write $(x)_+ \equiv\max\{x,0\}$. Let $\bm{1}_\cdot$ denote the indicator function. For any non-negative integers $a,b$, we use $[a;b]$ to denote the set $\{a,\ldots,b\}$ and $(a;b]$ to denote the set $\{a+1,\ldots, b\}$. For any two positive integers $a,b$, let $\Mod(a;b)$ be the remainder of $a$ divided by $b$. For any two real numbers $a,b$, define $a\vee b\equiv \max\{a,b\}$ and $a\wedge b\equiv \min\{a,b\}$. For any positive integers $m \geq n$, let $\ldot(m;n)\equiv m(m-1)\ldots(m-n+1)$ and $\udot(m;n)\equiv m(m+1)\ldots(m+n-1)$.  Let $\mathbb{Z}_+$ denote the set of positive integers and $\mathbb{Z}_{\geq 0}\equiv \mathbb{Z}_{+}\cup\{0\}$. For any $d\in \mathbb{Z}_+$, let $\mathbb{S}^d\subset\RR^{d+1}$ stand for the unit sphere. We write $\E_{\theta_0}$ as expectation under the experiment \eqref{eq:model} with truth $\theta_0$.

Let $C^m([0,1])$ denote the set of all $m$-times differentiable functions on $[0,1]$. For any $f\in C^m([0,1])$ and integer $0\leq \ell \leq m$, let $f^{(0)}(x)\equiv f(x)$ and $(D^{(\ell)}f)(x) \equiv f^{(\ell)}(x)$ be the $\ell$-th derivative of $f$ at point $x$. For any function $f$ defined on $[0,1]$, $\tau\in[0,1]$, and real number $c$, define the first-order integral $(I^1_{c;\tau}f)(x) \equiv \int_\tau^x f(y)\ \d y + c$ for $x\in[0,1]$, and the  $m$-th order integral iteratively as $(I^m_{c_0,\ldots,c_{m-1};\tau}f)(x) \equiv \big(I^1_{c_0;\tau}(I^{m-1}_{c_1,\ldots,c_{m-1};\tau}f)\big)(x)$ for any positive integer $m\geq 2$ and real sequence $\{c_\ell\}_{\ell=0}^{m-1}$. For any real function $f$, let $f(x_-)$ and $f(x_+)$ denote the left and right limits at $x$, respectively. 

For two non-negative sequences $\{a_n\}$ and $\{b_n\}$, we write $a_n\lesssim_d b_n$ (resp. $a_n\gtrsim_d b_n$) if $a_n \leq Cb_n$ (resp. $a_n\geq cb_n$) for some $C,c>0$ that only depend on $d$. We also write $a_n\asymp_d b_n$ if both $a_n\lesssim_d b_n$ and $a_n\gtrsim_d b_n$ hold. In the following, we will suppress $d$ in $\lesssim_d$, $\gtrsim_d$, and $\asymp_d$ when no confusion is possible. For any given constants $a_1,a_2,\ldots$, we write $C(a_1,a_2,\ldots)$ and $c(a_1,a_2,\ldots)$ to denote positive constants that only depend on $a_1,a_2,\ldots$.

\section{General-order spline regression}\label{sec:gen_spline}

We start with an exact definition of the general-order spline space in \eqref{def:spline_fcn_intro}: 
\begin{align*}
&\cal{F}_n(d,d_0,k)\equiv \Big\{f:[0,1]\to \RR: \text{there exist } 0\equiv n_0\leq \ldots \leq n_k\equiv n \text{ such that }\\ 
&\qquad n_0,\ldots, n_k \in \mathbb{Z}_{\geq 0}, \qquad n_i - n_{i-1}\geq (d+1)\bm{1}_{n_i >  n_{i-1}},\\
&\qquad f\text{ is a $d$-degree polynomial on each interval }(n_{i-1}/n, n_i/n], \text{ and } \\
&\qquad f^{(\ell)}\big((n_i/n)_-\big) = f^{(\ell)}\big((n_i/n)_+\big) \text{ for all } i\in[1;k-1] \text{ and } \ell\in[0;d_0]\Big\}.
\end{align*}
For any fixed degree $d\geq 0$, the range of $d_0$ is $[-1;d-1]$, with $d_0 = -1$ allowing the spline $f$ to be completely discontinuous.  The numbers $n_0/n,\ldots,n_{k}/n$ are the \textit{knots} of $f$, with the middle $(k-1)$ ones as \textit{inner knots}. Define the corresponding sequence space
\begin{align}\label{eq:spline_seq}
\Theta_n(d,d_0,k) \equiv \Big\{\theta\in\RR^n: \theta_i = f(i/n)\text{ for some } f\in\cal{F}_n(d,d_0,k)\Big\};
\end{align}
in what follows, we suppress the subscript $n$ of $\Theta_n(d,d_0,k)$ when no confusion is possible and name $n_0,\ldots,n_k$ in its corresponding spline $f\in\cal{F}_n(d,d_0,k)$ the {\it knots} of $\theta$.

Two remarks regarding the above spline class are in line.
\begin{itemize}
    \item[(i)] The function space $\cal{F}_n(d,d_0,k)$ enforces the inner knots of the spline to be positioned among the design points. This is due to two reasons. First, it ensures the existence of the LSE as defined in \eqref{eq:lse} with $\Theta = \Theta(d,d_0,k)$. Indeed, the minimization can be first taken over at most $(n+1)^{k-1}$ configurations of the inner knots, after which the problem becomes strictly convex with respect to the rest of the polynomial coefficients and thus has a unique solution. Second, it facilitates fast computation of the LSE via dynamic programming algorithms; see \cite{fearnhead2019detecting} for detailed illustration of the piecewise linear case. 
    \item[(ii)] The gap $d+1$ between $n_i$ and $n_{i-1}$ in the above definition is necessary for the identifiability of $f$ in the discontinuous case. This minimum spacing condition improves substantially over existing ones made in a class of $\ell_1$ methods; see Remark \ref{remark:compare_l1} ahead for more details.  
\end{itemize}
For any fixed $d\in \mathbb{Z}_{\geq 0}$ and $d_0\in[-1;d-1]$, let
\begin{align}\label{eq:boundary}
k_0 \equiv k_0(d,d_0)\equiv \floor{\frac{d+1}{d-d_0}} + 1.
\end{align}

Our first main result is the following oracle inequality. Recall that we only consider the case $k\ge 2$ in this paper and the analysis of global polynomials (corresponding to $k=1$) is rather straightforward.
\begin{theorem}\label{thm:noshape_upper}
Fix any $\theta_0\in\RR^n$. Let $\widehat{\theta}\equiv \widehat{\theta}(\Theta(d,d_0,k),Y)$ be the LSE as defined in \eqref{eq:lse} under the experiment \eqref{eq:model} with truth $\theta_0$. Then, for any $\delta > 0$, there exists some $C = C(d,\delta)$ such that the following statements hold for any $n\geq \underline{n}$ with some $\underline{n} = \underline{n}(d)$. If $2\leq k\leq k_0$,
\begin{align*}
\E_{\theta_0}\|\widehat{\theta} - \theta_0\|^2 \leq (1+\delta)\inf_{\theta\in\Theta(d,d_0,k)}\|\theta - \theta_0\|^2 + C\sigma^2k\log\log(16n/k),
\end{align*}
and if $k\geq k_0+1$,
\begin{align*}
\E_{\theta_0}\|\widehat{\theta} - \theta_0\|^2 \leq (1+\delta)\inf_{\theta\in\Theta(d,d_0,k)}\|\theta - \theta_0\|^2 + C\sigma^2k\log(en/k).
\end{align*}
\end{theorem}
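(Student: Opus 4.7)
The plan is to follow the standard least-squares oracle pipeline, isolating the localized Gaussian complexity of $\Theta(d,d_0,k)$ as the single quantity that determines the regime split. Starting from the basic inequality $\|Y-\widehat{\theta}\|^2\le \|Y-\theta^\ast\|^2$ applied at any $\theta^\ast=\bar\theta\in\Theta(d,d_0,k)$, and performing a standard peeling/slicing argument with an AM--GM split $2\langle\epsilon,\widehat\theta-\bar\theta\rangle\le \delta\|\widehat\theta-\bar\theta\|^2+\delta^{-1}(\cdot)$ of the type in \cite{massart2007concentration}, the oracle inequality with multiplier $(1+\delta)$ on the bias reduces to bounding
\begin{equation*}
\phi_k(t)\define\E\sup_{\theta\in\Theta(d,d_0,k),\,\|\theta-\bar\theta\|\le t}\langle\epsilon,\theta-\bar\theta\rangle
\end{equation*}
and then solving the fixed-point relation $\phi_k(t^\ast)\asymp (t^\ast)^2$; the resulting $(t^\ast)^2$ is precisely the remainder term appearing in the theorem.

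To compute $\phi_k(t)$, I would decompose $\Theta(d,d_0,k)=\bigcup_{\cal{K}} V_{\cal{K}}$ as a finite union of linear subspaces indexed by admissible knot vectors $\cal{K}=(n_0,\ldots,n_k)$, each $V_{\cal{K}}$ of dimension at most $(d+1)+(k-1)(d-d_0)\lesssim k$. In the slow regime $k\ge k_0+1$, a routine union bound across the $\binom{n-1}{k-1}\lesssim(en/k)^{k-1}$ configurations together with the Gaussian width bound $\sigma\sqrt{D}$ for an $O(k)$-dimensional subspace yields $\phi_k(t)\lesssim \sigma t\sqrt{k\log(en/k)}$; solving the fixed point produces the second bound of the theorem.

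The main challenge is the fast regime $2\le k\le k_0$, where the above union bound overshoots by a factor $\log n/\log\log n$. Here I would use the truncated-power expansion of any $\theta\in\Theta(d,d_0,k)$,
\begin{equation*}
\theta_i = P_0(i/n)+\sum_{j=1}^{k-1}\big(i/n-n_j/n\big)_+^{d_0+1}\,Q_j\big(i/n-n_j/n\big),
\end{equation*}
with $P_0$ of degree $d$ and each $Q_j$ of degree $d-d_0-1$. Under $k\le k_0=\floor{(d+1)/(d-d_0)}+1$, the total number $(k-1)(d-d_0)$ of jump coefficients is at most $d+1$, so only $O(1)$ continuous parameters remain in addition to the $k-1$ discrete knot positions. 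Via an analytic ``coefficient-tying'' step — choosing an $(d+1)$-subset of consecutive design points anchored at either the left-most or right-most knot of $\theta-\bar\theta$ and inverting the corresponding Vandermonde system — the coefficients of $P_0$ and of each $Q_j$ can be controlled by $\|\theta-\bar\theta\|$ uniformly in the knot locations. Modulo this step, $\phi_k(t)/t$ is seen to be dominated by the maximum over $(n_1,\ldots,n_{k-1})$ of a fixed-dimensional linear combination of self-normalized Gaussian partial sums of $\epsilon$.

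The final ingredient is the generalized law of the iterated logarithm in expectation developed in Section \ref{sec:lil}, which supplies an $O(\sqrt{\log\log(16n/k)})$ bound for the expected maxima of such self-normalized processes; summing the contributions of the $k-1$ knot scans gives $\phi_k(t)\lesssim \sigma t\sqrt{k\log\log(16n/k)}$, and the fixed point $(t^\ast)^2\asymp\sigma^2 k\log\log(16n/k)$ delivers the first bound. The principal obstacle, in my view, is the coefficient-tying step for general $(d,d_0)$: proving that the $\ell^2$-norm of the spline dominates every truncated-power coefficient \emph{uniformly} in the knot locations is exactly where the threshold $k\le k_0$ enters analytically, and arranging this control so that the LIL can be applied cleanly and independently to each knot variable is the technical heart of the argument.
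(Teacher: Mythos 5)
Your overall reduction — basic inequality, localized Gaussian complexity, fixed point — and your treatment of the slow regime $k\ge k_0+1$ (union bound over $\sim(en/k)^{k-1}$ knot configurations, $O_d(k)$-dimensional subspaces) both track the paper's Section~\ref{sec:noshape_upper} (cf.\ Proposition~\ref{prop:reduction}) and are fine. The gap is in the fast-regime coefficient control, which you correctly flag as the technical heart but for which the mechanism you propose does not work as stated.

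Inverting a Vandermonde system at $d+1$ consecutive design points anchored at one boundary cannot control all coefficients of $P_0$ and the $Q_j$'s in the way the LIL needs. In the truncated-power basis the coefficients are globally entangled: the polynomial on the boundary piece depends on all of them, and the $O(1)\times O(1)$ system that extracts them depends on the inner knot positions, degenerating as knots coalesce or approach the anchor. More to the point, even when well-conditioned, a single local inversion near one endpoint only yields bounds normalized by the length of that boundary piece; it cannot deliver the per-piece estimates $(a^i_\ell)^2 n_{i+1;i}^{2(\ell-1)}\big(n_{i+1}\wedge(n-n_i)\big)\lesssim 1$ of Lemma~\ref{lemma:coef_est_1}, whose $n_{i+1}\wedge(n-n_i)$ factor is exactly what Theorem~\ref{thm:lil} requires. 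What the paper actually does is structurally different: it works in the per-piece expansion~(\ref{eq:spline_para}) rather than truncated powers, iteratively cancels the highest nuisance coefficient using the $d_0+1$ continuity relations at one inner knot at a time (Lemma~\ref{lemma:quad_forms}), and propagates the $\ell_2$ control from the long boundary piece inward by induction on piece index (Lemmas~\ref{lemma:last_coef} and \ref{lemma:coef_est_1}). The threshold $k\le k_0$ enters precisely as the condition (\ref{ineq:proof_boundary}) that after $k-3$ cancellation steps the block of shared-coefficient quadratic forms is still nonempty. Your parameter-count criterion $(k-1)(d-d_0)\le d+1$ is an equivalent rewriting of $k\le k_0$ and correctly locates the boundary, but by itself proves nothing: the total coefficient count is still $O_d(1)$ at $k=k_0+1$, where the fast rate provably fails; it is the exhaustion of the cancellation scheme — not the parameter count — that drives the transition. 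So the "coefficient-tying" step needs to be replaced by (or shown equivalent to) the cancellation argument before the proof closes.
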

The following lower bound result shows that Theorem \ref{thm:noshape_upper} is optimal in the minimax sense. 
\begin{proposition}\label{prop:noshape_lower}
Under the experiment \eqref{eq:model}, there exists some $c = c(d)$ such that the following statements hold for all $n\geq \underline{n}$ with some $\underline{n} = \underline{n}(d)$. If $2\leq k\leq k_0$,
\begin{align*}
\inf_{\td{\theta}}\sup_{\theta\in\Theta(d,d_0,k)} \E_\theta\|\td{\theta} - \theta\|^2 \geq c\sigma^2k\log\log(16n/k),
\end{align*}
and if $k\geq k_0 + 1$,
\begin{align*}
\inf_{\td{\theta}}\sup_{\theta\in\Theta(d,d_0,k)} \E_\theta\|\td{\theta} - \theta\|^2 \geq c\sigma^2k\log(en/k),
\end{align*}
where the infimum over $\td{\theta}$ in both displays is taken over all measurable functions of $Y$.
\end{proposition}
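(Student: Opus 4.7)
The two bounds come from rather different constructions, so I would prove them separately.

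\textbf{The slow $k\log(en/k)$ rate for $k\ge k_0+1$.} My plan is to build a Varshamov--Gilbert packing of $\Theta(d,d_0,k)$ based on combinatorially many translates of a single localized polynomial ``bump''. The threshold $k\ge k_0+1$ turns out to be precisely the smallest $k$ at which such a bump can be realized inside $\Theta(d,d_0,k)$: a bump must consist of $p$ consecutive polynomial pieces flanked by two zero pieces, and a parameter count---$p(d+1)-(p-1)(d_0+1)$ spline coefficients minus the $2(d_0+1)$ conditions of vanishing derivatives at the two outer knots---yields a non-trivial bump iff $p\ge\lceil(d_0+2)/(d-d_0)\rceil=\lfloor(d+1)/(d-d_0)\rfloor$, which gives $k\ge p+2=k_0+1$. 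Once a unit-$\ell^2$-norm bump $\varphi\in\Theta(d,d_0,k)$ is produced, I would place $m\asymp k$ of its translates at $\asymp n/k$ candidate integer positions in $[1;n]$ and use Varshamov--Gilbert to extract a subfamily of $\exp(cm\log(en/m))$ sign configurations with pairwise Hamming distance $\gtrsim m$. Scaling the amplitude to $\sigma\sqrt{\log(en/k)}$ makes the pairwise $\ell^2$ distance $\asymp\sigma^2 k\log(en/k)$ while keeping the KL diameter of the same order, and a standard Fano inequality concludes.

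\textbf{The fast $k\log\log(16n/k)$ rate for $2\le k\le k_0$.} Since $k_0=k_0(d)$ is bounded in $n$, the target $\sigma^2 k\log\log(16n/k)$ is, up to $d$-dependent constants, just $\sigma^2\log\log(16n)$; together with the inclusion $\Theta(d,d_0,2)\subset\Theta(d,d_0,k)$ this reduces the problem to the $k=2$ lower bound $\sigma^2\log\log(16n)$, to be established uniformly over $d_0\in\{-1,0,\ldots,d-1\}$. For this I would invoke the general LIL-in-expectation lemma of Section~\ref{sec:lil}, applied to a dyadic ladder $\{\theta^{(r)}\}_r\subset\Theta(d,d_0,2)$ whose single inner knot sits at $(n-r)/n$: each $\theta^{(r)}$ is identically zero on $[1,n-r]$ and equal on $(n-r,n]$ to a suitably scaled multiple of the (up to normalization, fixed) degree-$d$ polynomial whose first $d_0+1$ derivatives vanish at $(n-r)/n$. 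Such a polynomial always exists, since the corresponding one-sided polynomial space has dimension $d-d_0\ge 1$ for every admissible $d_0$. Calibrating the amplitudes so that $\|\theta^{(r)}\|^2\asymp\sigma^2\log\log(16n/r)$ and feeding the normalized directions $\theta^{(r)}/\|\theta^{(r)}\|$ into the general LIL then yields the claimed $\sigma^2\log\log(16n)$ lower bound.

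\textbf{The main difficulty} lies in the fast-rate half: the construction must simultaneously honour the $d_0$-smoothness constraint at the moving inner knot (so that each $\theta^{(r)}$ genuinely belongs to $\Theta(d,d_0,2)$), produce a family of normalized directions nested enough to satisfy the hypotheses of the general LIL in Section~\ref{sec:lil}, and provide uniform-in-$r$ control of the discretization between the continuous $L^2$ polynomial norm and the $\ell^2$ norm at the $n$ design points. Once this single-scale LIL lower bound is in place, the rest of the proposition is essentially bookkeeping; the slow-rate half is by contrast standard modulo the dimension-counting observation that identifies the threshold $k_0$.
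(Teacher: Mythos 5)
Your treatment of the slow $k\log(en/k)$ regime is essentially the paper's argument: a dimension count on a single localized bump (and your count $p(d+1)-(p+1)(d_0+1)>0$ reduces to the paper's $(k_0-1)(d-d_0)\geq d_0+2$ with $p=k_0-1$), followed by a standard translated-bump Varshamov--Gilbert/Fano packing. That half is fine.

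The fast $k\log\log(16n/k)$ half has a genuine gap. Theorem \ref{thm:lil} is an \emph{upper bound} in expectation for a supremum of normalized partial-sum processes; it is the tool used to control the LSE risk from above, not a device for lower bounds. ``Feeding normalized directions into the general LIL'' does not produce a minimax lower bound: you need an information-theoretic separation argument. The paper's construction is a ladder $\{\theta^\ell\}_{\ell=0}^{M}$ with $M\asymp\log n$ and knots at $\tau_\ell\equiv\lfloor(1-2^{-\ell})n\rfloor$, taking $\theta^\ell$ to equal $\alpha_\ell(x-\tau_\ell/n)_+^d$ on $(\tau_\ell,n]$ with $\alpha_\ell\asymp(2^\ell)^{(2d+1)/2}\sqrt{\log\log(16n)/n}$. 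This calibration makes $\|\theta^\ell\|^2\asymp\sigma^2\log\log(16n)$ \emph{uniformly} in $\ell$ (so both the pairwise separations and the KL divergences $\mathrm{KL}(P_0,P_\ell)=\|\theta^\ell\|^2/(2\sigma^2)$ are $\asymp\log\log(16n)\asymp\log M$), and Tsybakov's many-hypothesis lower bound then gives the result. Your suggested scaling $\|\theta^{(r)}\|^2\asymp\sigma^2\log\log(16n/r)$ makes the KL bound non-uniform and the separation argument unclear. Also note that you do not need to tailor the bump to each $d_0$: since $\Theta(d,d-1,2)\subset\Theta(d,d_0,2)$ for all $d_0\leq d-1$, the single choice $(x-\tau)_+^d$ (i.e.\ the smoothest case) suffices. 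The $\log\log$ factor in the lower bound ultimately comes from $\log M$ with $M\asymp\log n$, not from the LIL theorem itself; the LIL theorem only shows the matching upper bound.
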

The proof of Theorem \ref{thm:noshape_upper} is presented in Section  \ref{sec:noshape_upper}, and the proof of Proposition \ref{prop:noshape_lower} can be found in Appendix \ref{subsec:noshape_lower}.

\begin{remark}\label{remark:naive_upper}
The above two results imply, in particular, the minimax rates in \eqref{eq:minimax_noshape_intro}. There, the upper bound $k\log(en/k)$ above the transition boundary $k_0$ is not essentially new and can be proved via straightforward modifications of the classical arguments in, e.g., \cite{donoho1994minimax,birge2001gaussian}. Rather, our main contribution lies in establishing the sharp transition boundary $k_0$ and the faster $\log\log (16n)$ rate below this boundary. 

In practice when the number of pieces $k$ is unknown, the minimax rates in \eqref{eq:minimax_noshape_intro} provide guidance for penalty selection in the adaptive version \eqref{eq:pen_lse_def_intro} of the $\ell_0$-constrained spline LSE. Precisely, one can choose $\widehat{k}$ as in \eqref{eq:khat_intro} with the penalty 
\begin{align*}
\pen(k;d,d_0)\equiv \tau\sigma^2\bigg[ \bm{1}_{k=1} + k\log\log (16n/k)\cdot\bm{1}_{2\leq k\leq k_0} +  k\log(en/k)\cdot\bm{1}_{k > k_0} \bigg]
\end{align*}
for some sufficiently large universal $\tau > 0$. Then, standard arguments \cite{birge1993rates,barron1999risk, birge2001gaussian,massart2007concentration} guarantee that $\widehat{\theta}_{\ada}$ is adaptively minimax optimal over $\Theta(d,d_0,k)$ for all $k\in\mathbb{Z}_+$. Details are accordingly skipped.
\end{remark}

\begin{remark}\label{remark:compare_gao}

It is important to mention here one crucial difference between our perspective for the phase transition results and the $\log\log (16n)$ rates and the one taken in \cite{gao2017minimax}. There, the faster $\log\log (16n)$ rate for $\Theta(0,-1,2)$ follows immediately from the general iterated logarithmic rates for $\Theta^\ast(0,k)$, the class of piecewise constant and non-decreasing signals with at most $k$ pieces (formally defined in Section \ref{sec:gen_shape}). In other words, the $\log\log (16n)$ rate for $\Theta(0,-1,2)$ is perceived in \cite{gao2017minimax} as a consequence of the monotonicity shape constraint. In contrast, the $\log\log (16n)$ rate for $\Theta(d,d_0,k)$ in (\ref{eq:minimax_noshape_intro}) in the regime $k\leq k_0$ is inherited from the strong regularity in the signal parametrized by the degree $d$ and the level of continuity $d_0$, rather than any explicit shape constraint. In the regime $k>k_0$, the $\log\log (16n)$ rate is not possible due to insufficient regularity in $\Theta(d,d_0,k)$, unless additional shape constraints are enforced; see Section \ref{sec:gen_shape} ahead for more details.

\end{remark}

\begin{remark}\label{remark:compare_l1}
Recently, \cite{guntuboyina2020adaptive} studied the theoretical properties of trend filtering (TF), a class of $\ell_1$-regularized discrete spline methods. More precisely, under the experiment \eqref{eq:model}, the $d$-th order TF estimator is 
\begin{align}\label{eq:tf}
\widehat{\theta}_{\text{TF}}^d\equiv \min_{\theta\in\RR^n} \Big\{\|Y-\theta\|^2 + \lambda\|D^{(d)}\theta\|_1\Big\},
\end{align}
where $\|\cdot\|_1$ denotes the vector $\ell_1$ norm, $\lambda > 0$ is a tuning parameter, and $D^{(d)}:\RR^n\rightarrow\RR^{n-d}$, when applied to vectors, represents the $d$-th order discrete difference operator defined as $D^{(0)}\theta\equiv \theta$, $D^{(1)}\theta\equiv (\theta_2-\theta_1,\ldots,\theta_n-\theta_{n-1})^\top$, and $D^{(r)}\theta \equiv D^{(1)}(D^{(r-1)}\theta)$ for $r\geq 2$. Equation \eqref{eq:tf} is a convex problem and can be solved efficiently via algorithms designed for lasso-type problems \cite{tibshirani2014adaptive}.

For any $\theta_0\in\Theta(d,d-1,k)$ in \eqref{eq:model}, Corollary 2.11 in \cite{guntuboyina2020adaptive} proved that, upon choosing the tuning parameter $\lambda$ properly and assuming a minimum spacing condition to be detailed below, 
\begin{align}\label{eq:l1_bound}
\E_{\theta_0}\|\widehat{\theta}_{\text{TF}}^{d+1} - \theta_0\|^2 \leq C\sigma^2\Big(k\log(en/k) + k^{2(d+1)} \bm{1}_{d>0}\Big).
\end{align}
for some $C=C(d)$. Comparing \eqref{eq:l1_bound} with our Theorem \ref{thm:noshape_upper} and Proposition \ref{prop:noshape_lower}, we see
the following distinctions between $\ell_0$-regularized splines and their $\ell_1$ counterparts.
\begin{itemize}
    \item[(i)] The bound \eqref{eq:l1_bound} requires a  minimum spacing condition that regulates, for non-vanishing pieces $(n_i;n_{i+1}]$ between knots with different signs (see Page 210 of \cite{guntuboyina2020adaptive} for their definition for the signs of knots), $n_{i+1} - n_i\geq cn/k$ for some $c = c(d)$. This is stronger than the constant gap condition assumed in $\Theta(d,d_0,k)$. Moreover, Theorem 4.2 in \cite{fan2018approximate} suggests that this minimum spacing condition is essential to the TF estimators, namely, the performance of \eqref{eq:tf} could deteriorate to $\sqrt{n}$ (up to some polylogarithmic factors) without it. 
    \item[(ii)] Over the class $\Theta(d,d-1,k)$ with transition boundary $k_0 = d + 2$, the $\ell_1$ TF estimator in \eqref{eq:tf} is in general rate sub-optimal below the boundary, even with the additional minimum spacing condition mentioned above. Specifically, in the constant space $\Theta(0,-1,k)$, the minimax rate of estimation is $\log\log(16n)$ with $k = 2$ pieces, but the TF estimator in \eqref{eq:tf} with $d = 1$ can only achieve the slower $\log(en)$ rate in view of Lemma 2.4 of \cite{guntuboyina2020adaptive}. 
\end{itemize}
\end{remark}

\begin{remark}\label{remark:computation}
For the computation of the $\ell_0$-constrained spline LSE $\widehat{\theta}$ and its adaptive version \eqref{eq:pen_lse_def_intro}, the major difficulty in the development of efficient algorithms is measured by the regularity parameter $d_0$. For $d_0=-1$, both estimators can be computed efficiently using standard dynamic programming algorithms \cite{auger1989algorithms,winkler2002smoothers,jackson2005algorithm,friedrich2008complexity} along with more refined pruning arguments \cite{killick2012optimal,maidstone2017optimal}. For the first-order continuous case $(d_0 = 0)$, \cite{fearnhead2019detecting} recently introduced for the linear case ($d=1$) a novel dynamic programming algorithm with linear to quadratic time complexity, which can be readily extended to arbitrary order $d\in\mathbb{Z}_+$. We expect that the above method could potentially be extended to the case of general $d$ and $d_0$, but this will be left as the subject of future research. 
\end{remark}

Lastly, we provide some intuition for the form of $k_0$ defined in (\ref{eq:boundary}). This will mostly be clear from the perspective of minimax lower bounds in Proposition \ref{prop:noshape_lower}. There, the situation is somewhat similar to the derivation of minimax lower bounds in the sparse linear regression setting \cite{donoho1994minimax,ye2010rate,raskutti2011minimax}, in that we only have to find, for each fixed $d$ and $d_0$, the minimum value of $k$ such that a subset $S$ of $1$-sparse vectors can be constructed in $\Theta(d,d_0,k)$ with cardinality $|S|\geq cn$ for some $c = c(d)$. Heuristically, this value can be found via the following degree-of-freedom (DOF) calculation:
\begin{align}\label{eq:dof}
(k-2)(d+1) \geq (k - 1)(d_0 + 1) + 1.
\end{align}
Here, the left-hand side is the DOF for any $1$-sparse $\theta\in\Theta(d,d_0,k)$ with the two end pieces being constantly zero, as each of the middle $(k-2)$ pieces has $(d+1)$ DOF arising from the $d$-degree polynomial. On the right-hand side, the first term $(k-1)(d_0+1)$ results from the $(d_0+1)$ continuity constraints at each of the $(k-1)$ inner knots, and the additional $1$ DOF excludes the possibility that $\theta\equiv 0$. Solving \eqref{eq:dof} yields that $k\geq 1 + \ceil{(d+2)/(d - d_0)}$, which indeed holds for $k = k_0+1$ as defined in \eqref{eq:boundary}, with equality when $d_0 = d-1$.

Figure \ref{fig:sparse} demonstrates the minimum number $k$ of pieces needed for $d\in[0;2]$ and $d_0\in[-1;d-1]$ so that a $1$-sparse vector can be constructed in general position. The minimum value of $k$ in each scenario matches $k_0 + 1$ as defined in \eqref{eq:boundary}.

\begin{figure}[!ht]
\centering
\includegraphics[width=12cm]{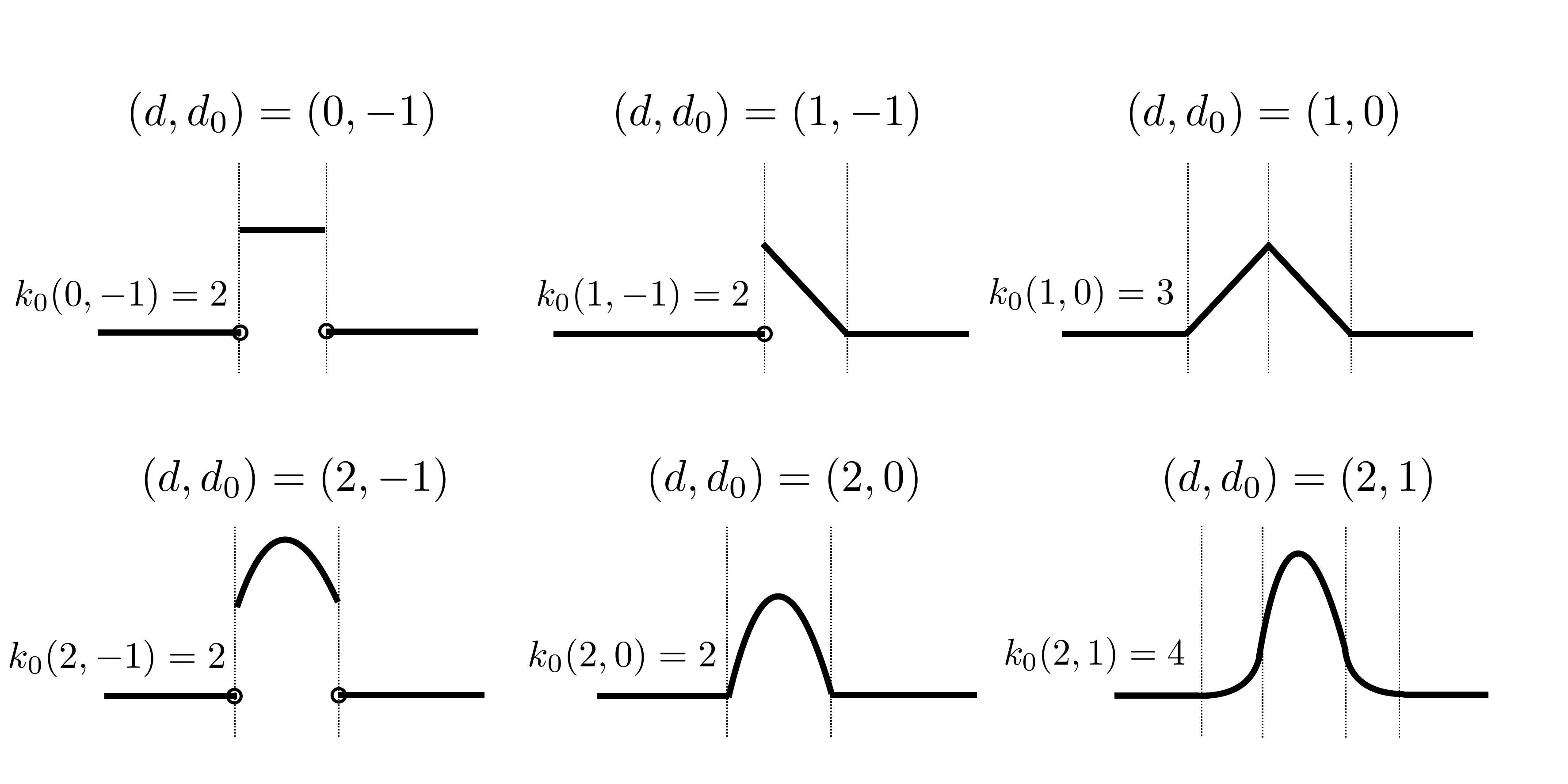}
\caption{Minimum number of $k = k_0+1$ pieces required to construct $1$-sparse vectors with general position in $\Theta(d,d_0,k)$ for $d\in[0;2]$ and $d_0\in[-1;d-1]$.}
\label{fig:sparse}
\end{figure}

\section{General-order splines with shape constraint}\label{sec:gen_shape}

As mentioned in \eqref{eq:minimax_GHZ_intro} in the introduction, in contrast to the phase transition in \eqref{eq:minimax_noshape_intro}, the faster $\log\log (16n)$ rate of estimation becomes universal in the class $\Theta^\ast(0,k)$ that contains all piecewise constant non-decreasing signals. This section derives higher-order analogues of this result. We start with the convexity constraint in the linear case in Section \ref{subsec:cvx_linear}, and then generalize to higher-order splines in Section \ref{subsec:shape_higher}. 

\subsection{Convex piecewise linear regression}\label{subsec:cvx_linear}

Convex regression is one of the central topics in shape constrained regression; see, e.g., \cite{guntuboyina2013global,chatterjee2015risk,bellec2018sharp} for global risk bounds and adaptation properties of the convex LSE. 

We start by defining the function space of convex piecewise linear functions: 
\begin{equation}\label{eq:cvx_linear_def}
\begin{aligned}
\cal{F}^*_n(1,k)\equiv\Big\{f\in\cal{F}_n(1,0,k): f\text{ has non-decreasing slopes on } [0,1]\Big\},
\end{aligned}
\end{equation}
and the space on the sequence level:
\begin{align}\label{eq:cvx_class}
\Theta_n^*(1,k)\equiv \{\theta^*\in\RR^n: \theta^*_i = f^*(i/n)\text{ for some } f^*\in\cal{F}^*_n(1,k)\},
\end{align} 
with the subscript $n$ in $\Theta_n^*(1,k)$ suppressed in the sequel. The following two results show that the convexity shape constraint eliminates the phase transition in $\Theta(1,0,k)$. 
 
\begin{proposition}\label{prop:cvx_linear}
Fix any $\theta_0\in\RR^n$.  Let $\widehat{\theta}^*\equiv\widehat{\theta}(\Theta^*(1,k),Y)$ be the LSE as defined in \eqref{eq:lse} under the experiment \eqref{eq:model} with truth $\theta_0$. Then, for any $\delta > 0$, there exists some $C = C(\delta)$ such that for any $n\geq \underline{n}$ with some universal $\underline{n}$ and $k\geq 2$,
\begin{align}\label{eq:cvx_linear}
\E_{\theta_0}\|\widehat{\theta}^* - \theta_0\|^2 \leq (1+\delta)\inf_{\theta^*\in\Theta^*(1,k)}\|\theta^* - \theta_0\|^2 + C\sigma^2k\log\log (16n/k).
\end{align}
\end{proposition}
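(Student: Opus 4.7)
The plan is to prove Proposition \ref{prop:cvx_linear} along the same high-level template as Theorem \ref{thm:noshape_upper}: a basic-inequality reduction to a localized Gaussian multiplier process, followed by a careful analytic control of the coefficients of the difference $\widehat{\theta}^* - \theta^*$, and then an appeal to the generalized law of iterated logarithm from Section \ref{sec:lil}. Starting from the LSE optimality $\|Y - \widehat{\theta}^*\|^2 \leq \|Y - \theta^*\|^2$ for any fixed oracle $\theta^*\in\Theta^*(1,k)$ and expanding with $Y = \theta_0 + \epsilon$, one obtains
$$\|\widehat{\theta}^* - \theta_0\|^2 \leq \|\theta^* - \theta_0\|^2 + 2\iprod{\epsilon}{\widehat{\theta}^* - \theta^*}.$$
A peeling argument on the radius $r = \|\widehat{\theta}^* - \theta^*\|$ then reduces the task to bounding, for each $r > 0$, the localized expected supremum
$$\E \sup_{\eta \in (\Theta^*(1,k) - \theta^*),\, \|\eta\|\leq r} \iprod{\epsilon}{\eta}$$
by a quantity of order $\sigma r \sqrt{k \log\log(16n/k)}$, up to a first-order term that gets absorbed into $\delta\|\theta^*-\theta_0\|^2$.

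The central analytic task is to describe $\eta = \widehat{\theta}^* - \theta^*$ precisely. Both $\widehat{\theta}^*$ and $\theta^*$ are convex and piecewise linear with at most $k$ pieces, so the union of their knots partitions $[1;n]$ into at most $m \leq 2k-1$ sub-intervals $\{I_j\}_{j=1}^{m}$ with endpoints $n_{j-1}, n_j$, on each of which $\eta$ is linear, say $\eta_i = \alpha_j + \beta_j (i - n_{j-1})/n$ for $i \in I_j$. The key coefficient lemma I would prove asserts that, thanks to the monotonicity of slopes enforced by convexity of both $\widehat{\theta}^*$ and $\theta^*$, the magnitudes $|\alpha_j|$ and $|\beta_j|$ can each be controlled by $\|\eta\|$ up to weights that are tied to the distance of $I_j$ from either the left-most or the right-most design point. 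The mechanism I expect to use is the hockey-stick representation $f(x) = a + bx + \sum_\ell c_\ell (x - \tau_\ell)_+$, which has non-negative second-difference coefficients $c_\ell$ for every convex piecewise linear $f$; taking the difference of the two such representations of $\widehat{\theta}^*$ and $\theta^*$ splits the second differences of $\eta$ into a non-negative and a non-positive batch, each of which can be re-summed monotonically from the corresponding boundary inward to produce a boundary-tied bound on $(\alpha_j, \beta_j)$. This tying is precisely the ingredient flagged in the introduction and is unavailable for the unshaped class $\Theta(1,0,k)$.

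With these coefficient bounds in hand, I would write
$$\iprod{\epsilon}{\eta} = \sum_{j=1}^{m}\Big(\alpha_j \iprod{\epsilon}{\bm{1}_{I_j}} + \beta_j \iprod{\epsilon}{\mathbf{v}_{I_j}}\Big),$$
where $\mathbf{v}_{I_j}$ is the linear ramp on $I_j$. The boundary-tying re-expresses each Gaussian inner product as a partial sum anchored at a single fixed endpoint of $[1;n]$, which is exactly the regime in which the generalized LIL in expectation from Section \ref{sec:lil} replaces the naive $\sqrt{\log(en/|I_j|)}$ factor by a $\sqrt{\log\log(16n/|I_j|)}$ factor. Summing over the $m = O(k)$ sub-pieces and paying a union bound across the at most $O(n^{2k})$ joint knot configurations, which is affordable because the LIL loses only logarithms, produces the desired $\sigma^2 k \log\log(16n/k)$ remainder; inserting this back into the peeling yields \eqref{eq:cvx_linear}.

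The main obstacle is the analytic coefficient lemma itself. Unlike the regime $k \leq k_0$ in Theorem \ref{thm:noshape_upper}, where the tying comes for free from the strong continuity structure parametrized by $d_0$, here one must cleanly separate the convex contribution of $\widehat{\theta}^*$ from the concave contribution of $-\theta^*$ in the second differences of $\eta$ and control each monotone batch individually, all while keeping the bound invariant under arbitrary knot configurations of the oracle $\theta^*$. Once this analytic input is in place, the remaining ingredients (the peeling, the union bound over knot configurations, and the invocation of the LIL of Section \ref{sec:lil}) should essentially parallel the arguments of Section \ref{sec:noshape_upper}, modulo the bookkeeping needed to handle $m \leq 2k$ rather than $k$ pieces.
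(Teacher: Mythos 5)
Your high-level template (basic inequality, analytic coefficient control, then the LIL of Section~\ref{sec:lil}) is the right one, and the observation that convexity forces a sign structure on the hockey-stick coefficients is the correct starting point. But two of your steps have genuine gaps, and they are exactly where the paper's proof has to work hardest.

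First, you decompose $[1;n]$ by the union of knots of $\widehat{\theta}^*$ and $\theta^*$ into $m\leq 2k-1$ pieces. On any such piece the difference $\eta=\widehat{\theta}^*-\theta^*$ is linear, but $\eta$ is \emph{not} convex, so its hockey-stick coefficients have mixed signs interleaved arbitrarily along $[1;n]$: the ``non-negative batch'' (at knots of $\widehat{\theta}^*$) and the ``non-positive batch'' (at knots of $\theta^*$) do \emph{not} have disjoint support. As a result, you cannot deduce control on each batch separately from $\|\eta\|\leq r$; the cross terms do not vanish. The paper's resolution is to first refine the oracle's partition into sub-pieces of length $L_0 = n/k$, i.e., it works with the points $\{n_{j,p}\}$. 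Crucially, on each sub-piece $(n_{j,p};n_{j,p+1}]$ the oracle is linear, so $\theta^* - \theta^*_\ora$ restricted there is \emph{still convex}; this is what makes Lemma~\ref{lemma:shape_para} applicable to $v_{j,p}(\theta^*)$ and is why the two hockey-stick families have disjoint supports separated by the single crossing index $q^*$, giving the orthogonality that your argument lacks. The $L_0$-refinement is also needed to turn the rate $k\log\log(16n)$ into $k\log\log(16n/k)$, since without it one sub-piece could have length $\asymp n$.

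Second, and more seriously, you propose to pay a union bound over $O(n^{2k})$ joint knot configurations and claim this is ``affordable because the LIL loses only logarithms.'' This is incorrect: a union bound over $n^{2k}$ events inflates the tail by $2k\log n$, which would give a remainder of order $\sigma^2 k\log n$, destroying the claimed $k\log\log(16n/k)$ rate (and for small $k$ this is precisely the slow $\log(en)$ rate that the proposition is meant to beat). The paper avoids this by aggregating the non-negative hockey-stick coefficients over a \emph{dyadic} partition $\{I^B_{j,p,\ell}\}$ of each sub-piece into block sums $B_{j,p,\ell}$; because the coefficients $b_{j,p,q}$ are non-negative, aggregation preserves the $\ell_2$-type estimate (\ref{eq:B_estimate}). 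The effective combinatorial object is then the set $\Delta^B$ of indicator patterns across $O(k\log_2(n/k))$ dyadic blocks with at most $k$ active blocks, giving $|\Delta^B|\leq (Ce\ceil{\log_2(n/k)})^k$ and hence a union-bound cost of only $k\log\log(n/k)$. Without some such coarsening, your argument would overcount. The remaining steps you describe (controlling the intercept $c_0$ by $O(1/\sqrt{n})$ via Lemma~\ref{lemma:shape_coef}, invoking the LIL with $\psi(x)=e^{cx^2}-1$) do match the paper.
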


\begin{proposition}\label{prop:cvx_linear_lower}
Under the experiment \eqref{eq:model}, there exists some universal constant $c$ such that for all $n\geq \underline{n}$ with some universal $\underline{n}$ and $k\geq 2$,
\begin{align*}
&\inf_{\td{\theta}^*}\sup_{\theta^*\in\Theta^*(1,k)} \E_{\theta^*}\|\td{\theta}^* - \theta^*\|^2 \geq c\sigma^2k\log\log(16n/k),
\end{align*}
where the infimum over $\td{\theta}^*$ is taken over all measurable functions of $Y$.
\end{proposition}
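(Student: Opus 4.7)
The plan is to establish the bound in two stages: first prove the lower bound for $k=2$, then extend to general $k \geq 2$ via a block construction. In both stages I use the standard reduction from minimax risk to multiple hypothesis testing through Fano's inequality, together with the Gaussian Kullback--Leibler identity $\mathrm{KL}(P_\theta \| P_{\theta'}) = \|\theta - \theta'\|^2/(2\sigma^2)$.

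For the $k=2$ case, I would construct a subfamily $\{\theta^{(m)}\}_{m=0}^M \subset \Theta^*(1,2)$ of bent lines $\theta^{(m)}_i = c_m\, (i/n - \tau_m)_+$, where the knots $\tau_m$ are placed dyadically approaching the right endpoint (say $\tau_m = 1 - 2^{-m}$), giving $M \asymp \log(16n)$ hypotheses, and the amplitudes $c_m>0$ are calibrated so that $\|\theta^{(m)}\|^2 \asymp \sigma^2 \log\log(16n)$ holds uniformly in $m$. A direct computation using the fact that the grid sum $\sum_{i: i/n > \tau}(i/n - \tau)^2$ scales cubically in $1-\tau$ shows that the pairwise distances $\|\theta^{(m)} - \theta^{(m')}\|^2$ are of the same order $\sigma^2 \log\log(16n)$. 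Fano's inequality then forces the minimax risk to be at least $c\sigma^2\log\log(16n)$, which is the claim for $k=2$.

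For general $k \geq 2$, I would partition $[0,1]$ into roughly $\lfloor k/2\rfloor$ blocks of width $\asymp 1/k$, and on each block install an independent hypothesis drawn from a rescaled $k=2$ subfamily. Because the difference of two bent lines $(x-\tau)_+ - (x-\tau')_+$ is a ramp that saturates to a constant after the second knot, the individual block perturbations do not decouple on their own; to recover block-wise localization, I would subtract a common convex piecewise-linear baseline from every hypothesis so that in each pairwise difference $\theta_\omega - \theta_{\omega'}$ only the blocks with $\omega_j \neq \omega'_j$ contribute and their contributions are approximately supported on the corresponding block (the residual cumulative tilt outside the block can be absorbed by slightly inflating block widths). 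A Fano-type argument on the Hamming structure of $\omega \in [M]^{\lfloor k/2\rfloor}$ then aggregates the per-block lower bound $\sigma^2 \log\log(16n/k)$ into the claimed $\sigma^2 k\log\log(16n/k)$.

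The main obstacle will be precisely the block-wise localization issue just described: truncated-power basis elements have long tails, so one cannot naively sum local perturbations the way one can in the piecewise-constant monotone setting of \eqref{eq:minimax_GHZ_intro}. A second technical point is the amplitude calibration $c_m$, which must simultaneously keep the per-hypothesis KL divergence bounded by a constant (so that Fano applies) and keep the pairwise separations at the scale $\sigma^2\log\log(16n/k)$; this is the higher-order analogue of the Gaussian iterated-logarithm calibration that drives the classical construction for piecewise-constant monotone signals, and it is the step where the $\log\log(16n/k)$ factor rather than a $\log(en/k)$ factor emerges on the lower-bound side.
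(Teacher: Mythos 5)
Your high-level plan matches the paper's: a dyadic family of $M\asymp\log(n/k)$ bent-line hypotheses per block with iterated-log amplitude calibration, glued over $\asymp k$ blocks, followed by Gilbert--Varshamov plus a Fano/Tsybakov reduction on the Hamming structure. You also correctly flag the central difficulty --- truncated powers $(x-\tau)_+$ have linearly \emph{growing}, not decaying, tails, so naive blocking as in the isotonic case of \eqref{eq:minimax_GHZ_intro} fails. The gap is in your proposed fix. Adding or subtracting a common baseline to every hypothesis does nothing to localize the pairwise differences $\theta_\omega-\theta_{\omega'}$ (the baseline cancels), and the residual tails cannot be ``absorbed by slightly inflating block widths'' since they extend linearly across all subsequent blocks. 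What must be arranged, and what you do not supply, is that all $M$ local perturbations on a block agree in \emph{both value and slope} at the block's right endpoint; only then do the pairwise differences become exactly block-supported and is the glued signal convex no matter which local index is chosen on each block.

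The paper's construction achieves this agreement by clipping: set $f_\ell \equiv \tilde f_\ell \vee f_{\text{ref}}$, where $\tilde f_\ell$ is the $\ell$-th bent line with knot at $\tau_\ell = (1-2^{-(\ell-1)})/\tilde k$, and $f_{\text{ref}}$ is a reference bent line whose slope (of order $2^{3\ell_0/2}\sqrt{\log\log(16n/k)/n}$) dominates every $\tilde f_\ell$. The maximum forces every $f_\ell$ to coincide with $f_{\text{ref}}$ near the right boundary, and the cumulative reference $f^j_{\text{ref}}$ serves as the convex baseline carrying the accumulated tilt forward. Because $f_\ell$ then has three linear pieces, the paper works with $\tilde k = k/3$ blocks, not $k/2$. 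Two smaller calibration remarks: (i) the per-hypothesis KL must be of order $\log M \asymp \log\log(16n/k)$, not $O(1)$, since separation and KL are of the same order and one only gets to tune the small multiplicative constant in the amplitudes to push $\mathrm{KL}/\log M$ below the Fano threshold; (ii) your $k=2$ step is essentially the paper's Proposition \ref{prop:noshape_lower} construction for $\Theta(d,d-1,2)$ with $d=1$ and is fine.
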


Proposition \ref{prop:cvx_linear} follows from its more general version in Theorem \ref{thm:shape_upper} ahead. The proof of Proposition \ref{prop:cvx_linear_lower} will be presented in Appedix \ref{subsec:shape_lower}.

\begin{remark}\label{remark:compare_bellec}
The in-expectation version of Theorem 4.3 in \cite{bellec2018sharp} proved a similar oracle inequality for the convex LSE:
\begin{align}\label{eq:bellec}
\E_{\theta_0}\|\widehat{\theta}(\Theta^*,Y) - \theta_0\|^2 \leq \inf_{\theta^*\in\Theta^*}\Big(\|\theta^* - \theta_0\|^2 + Ck(\theta^*)\log(en/k(\theta^*))\Big)
\end{align}
for some universal constant $C > 0$, where $\Theta^*\equiv \Theta^*(1,n)$ is the larger class of equispaced realizations of general convex functions on $[0,1]$, and $k(\theta^*)$ is the number of linear pieces of $\theta^*$, i.e., $k(\theta^*)\equiv \sum_{i=2}^n\bm{1}_{2\theta^*_i<\theta^*_{i-1} + \theta^*_{i+1}}$. Note that $\Theta^*$, as opposed to $\Theta^*(1,k)$, is a closed convex cone in $\RR^n$. The bounds \eqref{eq:cvx_linear} and \eqref{eq:bellec} are complementary in nature: the bound \eqref{eq:bellec} exploits the convexity of $\Theta^*$ to obtain a sharp oracle inequality (in the sense of leading constant $1$ before $\inf_{\theta^*\in\Theta^*}\|\theta^* - \theta_0\|^2$), but only achieves a slower worst-case $k\log(en/k)$ rate over the smaller class $\Theta^*(1,k)$; the bound \eqref{eq:cvx_linear}, or its adaptive version modified in a similar way as \eqref{eq:pen_lse_def_intro}, is minimax optimal over $\Theta^*(1,k)$ but loses the sharp leading constant $1$. 
\end{remark}

\subsection{General-order spline regression with shape constraint}\label{subsec:shape_higher}
Following \cite{balabdaoui2007estimation,chatterjee2015risk}, we consider the class of $d$-monotone splines defined as follows. Let
\begin{align*}
\cal{F}_n^*(0,k)\equiv \Big\{f\in \cal{F}_n(0,-1,k): f\text{ is non-decreasing on } [0,1]\Big\}
\end{align*}
be the $0$-monotone class. Next, for any $d\in\mathbb{Z}_+$, define
\begin{align*}
\cal{F}_n^*(d,k) \equiv \Big\{&f:[0,1]\to \RR: ~ f(x) = (I^{d}_{r_0,\ldots,r_{d-1};0}f_\circ)(x)\\
& \text{ for some } f_\circ\in\cal{F}_n^*(0,k) \text{ and real sequence } \{r_\ell\}_{\ell=0}^{d-1}\Big\}.
\end{align*}
Define the sequence version of the above space as
\begin{align}\label{eq:shape_class_seq}
\Theta_{n}^*(d,k)\equiv \Big\{\theta^*\in\RR^n: \theta^*_i = f^*(i/n) \text{ for some } f^*\in\cal{F}^*_n(d,k)\Big\},
\end{align}
shorthanded as $\Theta^*(d,k)$. One can readily check that for $d = 0$, $\Theta^*(0,k)$ is the class of $k$-piece isotonic signals studied in \cite{gao2017minimax}; for $d = 1$, $\Theta^*(1,k)$ coincides with the convex piecewise linear class in \eqref{eq:cvx_class}. Moreover, two facts follow immediately from the above definitions: (i) For any $d\geq 1$, $f^*\in \cal{F}^*_n(d,k)\subset C^{d-1}([0,1])$ so that $\Theta^*(d,k)\subset \Theta(d,d-1,k)$ with the latter defined in \eqref{eq:spline_seq}; (ii) For any $d\geq 1$ and $\ell\in[1;d]$, it holds that $(f^*)^{(\ell)}\in \cal{F}^*_n(d-\ell,k)$. 

The following result, with Proposition \ref{prop:cvx_linear} as a special case, shows that $d$-monotonicity eliminates the phase transition in the general spline space $\Theta(d,d-1,k)$. Its proof is given in Section \ref{sec:shape_upper}.
\begin{theorem}\label{thm:shape_upper}
Fix any $\theta_0\in\RR^n$. Let $\widehat{\theta}^*\equiv\widehat{\theta}(\Theta^*(d,k),Y)$ be the LSE as defined in \eqref{eq:lse} under the experiment \eqref{eq:model} with truth $\theta_0$. Then, for any $\delta > 0$, there exists some $C = C(d,\delta)$ such that for any $n\geq \underline{n}$ with some $\underline{n} = \underline{n}(d)$ and $k\geq 2$,
\begin{align*}
\E_{\theta_0}\|\widehat{\theta}^* - \theta_0\|^2 \leq (1+\delta)\inf_{\theta^*\in \Theta^\ast(d,k)}\|\theta^\ast - \theta_0\|^2 + C\sigma^2k\log\log(16n/k).
\end{align*}
Moreover, there exists some $c = c(d)$ such that for all $n\geq \underline{n}$ and $k\geq 2$,
\begin{align*}
\inf_{\td{\theta}^*}\sup_{\theta^*\in\Theta^*(d,k)} \E_{\theta^*}\|\td{\theta}^* - \theta^*\|^2 \geq c\sigma^2\log\log (16n),
\end{align*}
where the infimum over $\td{\theta}^*$ is taken over all measurable functions of $Y$.
\end{theorem}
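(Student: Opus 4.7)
The plan has two parts: the sharp oracle inequality (upper bound) and the $\log\log(16n)$ minimax lower bound.

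\textbf{Upper bound.} I will follow the same oracle-inequality scheme used for Theorem \ref{thm:noshape_upper}. The basic inequality for the LSE gives, for arbitrary $\theta^\ast\in\Theta^\ast(d,k)$,
\begin{align*}
\|\widehat{\theta}^\ast-\theta_0\|^2 - \|\theta^\ast-\theta_0\|^2 \leq 2\iprod{\epsilon}{\widehat{\theta}^\ast-\theta^\ast},
\end{align*}
so a standard peeling argument over shells $\{\theta\in\Theta^\ast(d,k):\|\theta-\theta^\ast\|\asymp 2^j r\}$ reduces the $(1+\delta)$-sharp oracle inequality to an iterated-logarithmic control on the localized Gaussian complexity
\begin{align*}
\E\sup_{\substack{\theta\in\Theta^\ast(d,k)\\ \|\theta-\theta^\ast\|\leq t}}\iprod{\epsilon}{\theta-\theta^\ast}
\lesssim_d \sigma t\sqrt{k\log\log(16n/k)} + \sigma^2 k\log\log(16n/k).
\end{align*}

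The main analytical step, foreshadowed at the end of Section 1.1, is a bound on the polynomial coefficients of any $f^\ast\in\cal{F}^\ast_n(d,k)$ that ties them to a single global reference knot. The definition of $\cal{F}^\ast_n(d,k)$ forces $(f^\ast)^{(d)}$ to be a non-decreasing step function in $\cal{F}^\ast_n(0,k)$, determined by the knots $0 = n_0 < \ldots < n_k = n$ and $k$ monotone values. Iterated integration, together with $(d-1)$-times continuous differentiability at every inner knot, then expresses each lower-order polynomial coefficient on every piece as an antiderivative of $(f^\ast)^{(d)}$ pinned by the $d$ global constants $f^\ast(0),(f^\ast)^{(1)}(0),\ldots,(f^\ast)^{(d-1)}(0)$. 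This yields a parametrization of $\Theta^\ast(d,k)$ by $k+d$ scalar degrees of freedom plus $k-1$ inner-knot positions, in which the sup-norm of each polynomial piece is dominated by values taken at a single boundary point---precisely the structural hypothesis of the generalized LIL developed in Section \ref{sec:lil}. Applying that LIL piece-by-piece and summing yields the $\log\log(16n/k)$ rate for every $k\geq 2$, since $d$-monotonicity is a \emph{global} regularity constraint and thus sidesteps the pure-entropy $\log(en/k)$ regime that $\Theta(d,d-1,k)$ enters as soon as $k>k_0$. The main obstacle I foresee is carrying through the tied-to-endpoint bounds with sharp constants for arbitrary $d$: the iterated integration introduces Vandermonde-type factors that must be absorbed by the monotone ordering of $(f^\ast)^{(d)}$ rather than by a union-bound enumeration.

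\textbf{Lower bound.} Since the target $c\sigma^2\log\log(16n)$ is independent of $k$, it suffices to work inside $\Theta^\ast(d,2)$. I will construct a packing $\{\theta_\tau\}_{\tau}\subset\Theta^\ast(d,2)$ of two-piece $d$-monotone splines indexed by the position $\tau/n$ of the single inner knot, of truncated-power form
\begin{align*}
\theta_\tau(i) = h_\tau\bigl((i-\tau)/n\bigr)_+^d,\qquad i\in[1;n],
\end{align*}
with amplitudes $h_\tau$ calibrated so that the KL diameter of the family is $\lesssim\log n$ while typical pairwise separations $\|\theta_{\tau_1}-\theta_{\tau_2}\|$ stay of order $\sigma\sqrt{\log\log n}$. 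Each $\theta_\tau$ lies in $\Theta^\ast(d,2)$ because the $d$-th derivative of $(\,\cdot-\tau/n)_+^d$ is proportional to $\bm{1}_{\cdot>\tau/n}$, which is non-decreasing. A Fano-type argument combined with the iterated-logarithm amplitude calibration then delivers the $\sigma^2\log\log(16n)$ lower bound, in line with the scheme of \cite{gao2017minimax} underlying \eqref{eq:minimax_GHZ_intro} and with the proof of Proposition \ref{prop:cvx_linear_lower}.
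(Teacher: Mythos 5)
Your lower bound sketch is exactly the paper's argument: the family $\{\theta_\tau\}$ with $\theta_\tau(i)=h_\tau((i-\tau)/n)_+^d$ and geometric knot positions, calibrated to keep KL diameter $O(\log\log n)$, lies in $\Theta^\ast(d,2)\subset\Theta^\ast(d,k)$ and Tsybakov's multiple-testing reduction finishes; nothing to add there.

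The upper bound, however, has a genuine gap. You correctly identify the two structural lemmas the paper proves (the truncated-power canonical form for $f^\ast\in\mathcal{F}^\ast_n(d,k)$, and the $|c_\ell|\lesssim 1/\sqrt n$ bound on the lower-order polynomial coefficients), and you correctly note that the generalized LIL of Section~\ref{sec:lil} should supply the $\log\log$ factor. But ``applying the LIL piece-by-piece and summing'' does not close the argument, because the pieces of the LSE $\widehat{\theta}^\ast$ are not aligned with those of the oracle, and the knot positions are free parameters over which one must take a supremum. A naive maximization over $O(n)$ candidate positions per knot gives $k\log(en/k)$, not $k\log\log(16n/k)$; the iterated-log gain has to be extracted from the interplay between the $\ell_2$ constraint, the LIL normalizer, and the combinatorics of free knots, and your plan does not specify that mechanism.

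The missing mechanism in the paper's proof has three interlocking components that your plan does not articulate. First, after the reduction to complexity width, one passes to a \emph{finer fixed grid} $\{n_{j,p}\}$ of mesh $\le n/k$ refining the oracle's knots, so that the knots of $\widehat{\theta}^\ast-\theta^\ast_{\mathrm{ora}}$ in each cell can be counted and bounded. Second, within each cell one performs a \emph{dyadic decomposition} into intervals $I^B_{j,p,\ell}$ (and their mirror images $I^A_{j,p,\ell}$), and here the \emph{non-negativity} of the truncated-power coefficients $\{a_j\}, \{b_j\}$ in the canonical form is essential: it lets one collapse all coefficients whose knots fall into a single dyadic interval into a block sum $B_{j,p,\ell}$, for which the $\ell_2$ constraint yields a sharp estimate tied to the LIL normalizer. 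Without the sign constraint this collapse fails, which is exactly why the same strategy cannot be run for the unshaped class $\Theta(d,d-1,k)$ beyond $k=k_0$. Third, the supremum over knot positions is reduced to a \emph{finite union bound} over the set $\Delta^B$ of admissible indicator configurations $\{\delta^B_{j,p,\ell}\}$ with $\sum\delta^B_{j,p,\ell}\le k$; the cardinality of this set is of order $\binom{Ck\lceil\log_2(n/k)\rceil}{k}\lesssim (Ce\lceil\log_2(n/k)\rceil)^k$, and combining this union bound with the exponential-moment form of the LIL (via $\psi(x)=e^{cx^2}-1$) and an integration of the tail estimate gives exactly $k\log\log(16n/k)$. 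Your remark that the Vandermonde-type difficulties should be ``absorbed by the monotone ordering\ldots\ rather than by a union-bound enumeration'' points in the wrong direction: the paper \emph{does} use a union bound, and the role of the sign constraint is to compress the index set over which that union bound runs, not to eliminate it. As written, your upper-bound plan would need all three of these ingredients to be made explicit before it constitutes a proof.
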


\begin{remark}\label{remark:compare_GHZ}
The essential technical difficulties in proving Theorem \ref{thm:shape_upper} and Proposition \ref{prop:cvx_linear_lower} over the oracle inequality version of (\ref{eq:minimax_GHZ_intro}) (cf. Theorem 2.1 of \cite{gao2017minimax}) rest in the additional regularity of $\Theta^*(d,k)$ over $\Theta^*(0,k)$.

\begin{itemize}
    \item[(i)] For the upper bound, \cite{gao2017minimax} made essential use of the fact that $\widehat{\theta}(\Theta^*(0,k))$ is the sample average given the estimated knots; cf. Lemma 5.1 therein. The analogous property is, unfortunately, not true even for $\widehat{\theta}(\Theta^*(1,k))$. Instead, we provide a completely different proof which is based on a new parametrization for general-order splines with shape constraint (cf. Lemma \ref{lemma:shape_para} ahead). We further observe that this new proof technique, when applied to the setting of \cite{gao2017minimax}, significantly simplifies their proof; see Section \ref{subsec:shape_main} for details.
    \item[(ii)] For the lower bound in Proposition \ref{prop:cvx_linear_lower}, the continuity constraint in $\Theta^*(1,k)$ requires a much more delicate construction of least favorable signals that achieves the $k\log\log(16n/k)$ rate, compared to $\Theta^*(0,k)$; see Appendix \ref{subsec:shape_lower} for more details. This lower bound construction can actually be extended to yield the optimal $k\log\log(16n/k)$ rate over the quadratic class $\Theta^*(2,k)$, but a general lower bound of the order $k\log\log(16n/k)$ is still lacking for higher-order $d$-monotone splines.
\end{itemize}

\end{remark}

\section{A generalized law of iterated logarithm}\label{sec:lil}

In this section, we present a generalized law of iterated logarithm (LIL) in expectation that underlies the $\log\log (16n)$ rates derived in Sections \ref{sec:gen_spline} and \ref{sec:gen_shape}. Recall that a centered random variable $X$ is said to be \emph{sub-Gaussian} with parameter $\tau$, if there exists some $K > 0$ such that $\E\exp(\lambda X)\leq K\exp(\lambda^2\tau^2/2)$ for any $\lambda \in\RR$.

\begin{theorem}\label{thm:lil}
Fix positive integers $d\geq 1$ and $n\geq 2$. Let $\{\varepsilon_i\}_{i=1}^n$ be a sequence of independent and identically distributed centered sub-Gaussian random variables with parameter $1$. Let $\psi:\RR_+\rightarrow\RR_+$ be a strictly increasing continuous function with inverse $\psi^{-1}$. Let 
\begin{align*}
Z\equiv \max_{1\leq n_1 < n_2\leq n}\frac{\big|\sum_{i\in(n_1;n_2]} (i-n_1)^d\varepsilon_i\big|}{(n_2-n_1)^d(n_2\wedge (n-n_1))^{1/2}}.
\end{align*}
Then, provided that 
\begin{align}\label{cond:lil}
\int_1^\infty e^{-c_0(\psi^{-1}(t))^2}\ \d{t} < \infty
\end{align}
for some sufficiently small $c_0 = c_0(d)$, there exist some $C_1 = C_1(\psi,d)>0$ and $C_2=C_2(d)>0$ such that
\begin{align*}
\E\psi(Z)\leq C_1\big[\psi\big((C_2\log\log (16n))^{1/2}\big)\vee 1\big].
\end{align*}
\end{theorem}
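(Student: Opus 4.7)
My plan is to reduce the theorem to the classical law of iterated logarithm for partial sums via a summation-by-parts (Abel) argument, so that the polynomial weights $(i-n_1)^d$ effectively disappear. Let $T(m)=\sum_{i=1}^m \varepsilon_i$ denote the standard partial sum (with $T(0)=0$). The first key step is to observe that, with $m=n_2-n_1$ and $\sigma(j)=T(n_1+j)-T(n_1)$, applying Abel summation to $\sum_{j=1}^m j^d[\sigma(j)-\sigma(j-1)]$, together with the telescoping identity $\sum_{j=1}^{m-1}[(j+1)^d-j^d]=m^d-1$, yields the pointwise bound
\begin{align*}
\biggl|\sum_{i\in(n_1;n_2]}(i-n_1)^d\varepsilon_i\biggr|\;\leq\;2\,m^d\max_{1\leq k\leq m}\bigl|T(n_1+k)-T(n_1)\bigr|,
\end{align*}
where the constant $2$ is absolute (and in particular independent of $d$). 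This is the conceptual heart of the argument: the polynomial weighting is absorbed into the $m^d$ factor which exactly cancels the $(n_2-n_1)^d$ in the denominator of $Z$.

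With this bound in hand, I would split $Z$ into two cases according to whether $n_2\leq n-n_1$ or $n_2>n-n_1$. In case $(a)$, $n_2\wedge(n-n_1)=n_2$, and using that $n_1\leq n_1+k\leq n_2$ (so $\sqrt{n_2}\geq\sqrt{n_1+k}\vee\sqrt{n_1}$) combined with the triangle inequality,
\begin{align*}
\frac{|W_{n_1,n_2}|}{(n_2-n_1)^d n_2^{1/2}}\leq 2\max_{1\leq k\leq m}\frac{|T(n_1+k)|+|T(n_1)|}{\sqrt{n_2}}\leq 4\max_{1\leq j\leq n}\frac{|T(j)|}{\sqrt{j}}.
\end{align*}
In case $(b)$, an analogous bound holds with $T$ replaced by the reflected partial sum $\tilde T(m)=\sum_{i=1}^m\varepsilon_{n+1-i}$, after rewriting $T(n_1+k)-T(n_1)=\tilde T(n-n_1)-\tilde T(n-n_1-k)$ and repeating the case-$(a)$ argument on $\tilde T$. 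Since $\tilde T$ is distributionally identical to $T$, this reduces everything to the one-dimensional partial sum LIL.

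To conclude, I would invoke (or briefly re-derive) the classical LIL in expectation for sub-Gaussian partial sums: under condition \eqref{cond:lil},
\begin{align*}
\E\psi\biggl(\max_{1\leq j\leq n}\frac{|T(j)|}{\sqrt{j}}\biggr)\leq C_1(\psi)\bigl[\psi(C_2\sqrt{\log\log(16n)})\vee 1\bigr].
\end{align*}
This is standard: partition $[1,n]$ into $O(\log n)$ dyadic blocks, use Doob's maximal inequality within each block to obtain sub-Gaussian tails with parameter $\asymp 1$, union-bound over the blocks to produce the $\sqrt{\log\log n}$ scaling, and finally integrate the tail against $\psi'$ using the convergence guaranteed by \eqref{cond:lil}. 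Combining the bounds for cases $(a)$ and $(b)$ via $\psi(\max(a,b))\leq\psi(a)+\psi(b)$ then yields the claim.

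\textbf{Main obstacle.} The substantive difficulty is conceptual rather than technical: recognizing that the Abel summation identity above collapses the two-parameter, polynomially-weighted maximum into the classical one-dimensional partial-sum LIL. A naive approach (e.g., expanding $(i-n_1)^d = \sum_r\binom{d}{r}i^r(-n_1)^{d-r}$ and triangle-inequality-ing each $|U_r(n_2)-U_r(n_1)|$) fails in the regime $n_1\asymp n_2$ with $m=n_2-n_1\ll n_2$, where the required cancellation is destroyed; union-bounding over the $\asymp n^2$ such pairs then produces only a $\sqrt{\log n}$ rate instead of $\sqrt{\log\log n}$. The Abel step bypasses this entirely. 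A secondary technical point is handling condition \eqref{cond:lil} so that the small-$c_0$ threshold is exactly enough to make $\int\psi'(t)e^{-ct^2}\,\d t$ converge after the LIL peeling.
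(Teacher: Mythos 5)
Your proposal is correct and follows essentially the same route as the paper's proof: both use an Abel summation to collapse the polynomial weight $(i-n_1)^d$ onto the unweighted two-parameter partial-sum maximum normalized by $(n_2\wedge(n-n_1))^{1/2}$, then obtain the tail $\Prob(Z>t)\lesssim\log n\cdot e^{-ct^2}$ via a dyadic decomposition plus L\'evy/Doob maximal inequality, and finally integrate the tail against $\psi$ under the integrability condition \eqref{cond:lil} to produce the $\log\log$ rate. Your version is marginally cleaner at the Abel step (an absolute constant $2m^d$ rather than the paper's $d$-dependent $2^{d+2}$), and you phrase the reduction as a direct pointwise inequality $Z\lesssim\max_j|T(j)|/\sqrt{j}\vee\max_j|\tilde T(j)|/\sqrt{j}$ rather than as the paper's event-containment $\cal{E}_1\subset\cal{E}_2$, but the underlying mechanism is identical.
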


The proof of the above theorem can be found in Appendix \ref{sec:proof_lil}. Here are some choices of $\psi$'s that will be relevant in the proofs of results in Sections \ref{sec:gen_spline} and \ref{sec:gen_shape}.

\begin{example}
Let $\psi(t)=t^\alpha$ where $\alpha>0$. Then $\psi^{-1}(t)=t^{1/\alpha}$, so clearly (\ref{cond:lil}) holds.
 \end{example}

\begin{example}
Let $\psi(t)=e^{c t^\alpha}-1$ where $\alpha,c>0$. Then $\psi^{-1}(t) = \big(\log(1+t)/c\big)^{1/\alpha}$. So 
\begin{align*}
\int_1^\infty e^{-(c_0/c^{2/\alpha}) (\log(1+t))^{2/\alpha} }\ \d{t} 
&
\begin{cases}
<\infty, & \alpha \in (0,2], c \in (0,c_0\bm{1}_{\alpha=2}+\infty\bm{1}_{\alpha \in (0,2)}),\\
=\infty, & \textrm{otherwise}.
\end{cases}
\end{align*}
Note that a law of iterated logarithm in expectation fails in general for the choice $\psi(t)=e^{ct^\alpha}-1$ whenever $\alpha>2$, as $\alpha=2$ corresponds to the maximal integrability of Gaussian random variables.
\end{example}

\section{Proof of Theorem \ref{thm:noshape_upper}}\label{sec:noshape_upper}

Starting from this section, unless otherwise specified, we will focus on the case $\sigma^2 = 1$; the extension to an arbitrary $\sigma^2 > 0$ is straightforward and hence not recorded here. We will also omit the proof for the $k\log(en/k)$ part of Theorem \ref{thm:noshape_upper} as it follows essentially from the classical arguments in \cite{donoho1994minimax,birge2001gaussian} by completely ignoring the regularity constraints. For the rest of the section, we focus on illustrating the form of $k_0$ in \eqref{eq:boundary} from the upper bound perspective and proving the faster $\log\log (16n)$ rate below the transition boundary. Section \ref{sec:intuition} provides a proof outline with illustrative simple cases discussed at first. Section \ref{sec:cw} reduces the proof of Theorem \ref{thm:noshape_upper} to the bound of complexity width in Proposition \ref{prop:width}. The key ingredients to the proof of this proposition will be presented in Sections \ref{subsec:noshape_ground} and \ref{subsec:noshape_key}, followed by the main proof in Section \ref{subsec:noshape_main}.


\subsection{Proof outline}\label{sec:intuition}

\subsubsection{Piecewise linear case}\label{sec:linear-case-S5}

We first consider the piecewise linear case $d=1, d_0 \in \{-1,0\}$, and assume $\theta_0=0$ in (\ref{eq:model}) for simplicity of discussion. Here, the transition boundary in \eqref{eq:boundary} is $k_0 =2$ for $d_0 = -1$ and  $k_0=3$ for $d_0 = 0$, beyond which the $\log\log(16n)$ rate cannot be attained. We focus on the case of $k=3$ pieces and illustrate the difference between $d_0 = -1$ and $d_0= 0$. To start, a standard reduction to complexity width in Proposition \ref{prop:reduction} ahead yields that for some universal constant $C>0$,
\begin{align*}
\E_{\theta_0}\pnorm{\widehat{\theta}-\theta_0}{}^2 -C \pnorm{\theta_\ora - \theta_0}{}^2 \leq C\cdot  \E\sup_{\theta\in\Theta(1,d_0,3):\pnorm{\theta}{}\leq 1}\big(\varepsilon \cdot \theta\big)^2 \equiv C\cdot  \E Z^2,
\end{align*}
where $\theta_\ora$ is any oracle in $\Theta(1,d_0,3)$ such that $\inf_{\theta\in\Theta(1,d_0,3)}\pnorm{\theta-\theta_0}{}^2$ is achieved, and $\E Z^2$ is termed the `complexity width' of $\Theta(1,d_0,3)$. To bound $\E Z^2$, we use the following parametrization for any given $f\in\cal{F}_n(1,d_0,3)$ with knots $0=n_0/n \leq n_1/n\leq n_2/n\leq n_3/n=1$: for $i \in \{0,1,2\}$,
\begin{align}\label{eq:linear_para}
f(x) = a_i + b_i(x-n_{i}/n), \quad x\in\bigg(\frac{n_i}{n}, \frac{n_{i+1}}{n}\bigg].
\end{align}
Under the additional continuity constraint when $d_0=0$, one has
\begin{align}\label{eq:linear_constraint}
a_1 = a_0 + b_0(n_1 - n_0)/n\quad \text{ and }\quad a_2 = a_1 + b_1(n_2 - n_1)/n.
\end{align}
Under the parametrization \eqref{eq:linear_para}, the supremum within the complexity width can be bounded by
\begin{align*}
 Z &\leq \sup_{\theta\in\Theta(1,d_0,3):\pnorm{\theta}{}\leq 1}\sum_{i=0}^{2}\bigg(|a_i| \biggabs{\sum_{j\in(n_i;n_{i+1}]}\varepsilon_j} + \biggabs{\frac{b_i}{n}} \biggabs{\sum_{j\in(n_i;n_{i+1}]}(j-n_i)\varepsilon_j}\bigg).
\end{align*}
The magnitudes of $\{a_i\}$ and $\{b_i\}$ can be drastically different for $d_0=-1$ and $d_0=0$. We illustrate this on the middle piece $(n_1;n_2]$.
\begin{itemize}
\item ($d_0=-1$). The constraint $1\geq \pnorm{\theta}{} \geq \pnorm{\theta}{(n_1; n_2]}$ directly yields the following estimates for $a_{1}$ and $b_{1}$ with some universal $C > 0$:
\begin{align}\label{eq:discon_est}
|a_{1}| \leq C(n_2 - n_1)^{-1/2} \quad \text{ and }\quad |b_{1}/n|\leq C(n_2-n_1)^{-3/2}.
\end{align}
Such estimates cannot be improved for, e.g., $f(x) = c(L^{-1/2} - nL^{-3/2}(x-1/2))\bm{1}_{(1/2,1/2+L/n]}(x)$ for small $c > 0$ and $L\geq 2$.
\item ($d_0=0$). With the additional continuity constraint in \eqref{eq:linear_constraint}, refined estimates can be obtained:
\begin{align}\label{eq:con_est}
|a_{1}| \leq Cn_2^{-1/2} \quad \text{ and }\quad |b_{1}/n|\leq C(n_2\wedge (n-n_1))^{-3/2}.
\end{align}
These estimates only hold up to $k = 3$ pieces. For $k\geq 4$, the best possible estimates are of type (\ref{eq:discon_est}) by considering, e.g., $f(x) = c\Big(nL^{3/2}\big(x-(1/2-L/n)\big)\bm{1}_{(1/2-L/n,1/2]}(x) - nL^{3/2}\big(x-(1/2+L/n)\big)\bm{1}_{(1/2,L/n+1/2]}(x)\Big)$ for small $c>0$ and $L\geq 2$. 
\end{itemize}
The crucial difference here is that estimates of type (\ref{eq:con_est}) enable a law of iterated logarithm (cf. Theorem \ref{thm:lil}) with $\E Z^2\lesssim \log\log (16n)$, while those of (\ref{eq:discon_est}) correspond to the maxima of $O(n)$ independent Gaussian random variables with $\E Z^2\lesssim \log (en)$.

\subsubsection{General case}\label{sec:general-case-S5}

Similar to the linear case discussed above, the key step is to prove
\begin{align}\label{eq:width}
\E\sup_{\theta\in\Theta(d,d_0,k_0),\|\theta\|\leq 1} \parr*{\varepsilon \cdot \theta}^2 \leq C\log\log (16n),
\end{align}
and we need to obtain estimates of type (\ref{eq:con_est}). For simplicity, we consider the smoothest case $d_0 = d-1$ so that $k_0 = d+2$. 

Fix a degree $d$, and any $f\in\cal{F}_n(d,d-1,d+2)$ along with the corresponding $\theta\in\Theta(d,d-1,d+2)$ of unit norm and knots $0=n_0\leq n_1\leq \ldots \leq n_{d+2} = n$. We use the following parametrization of $f$:
\begin{align}\label{eq:spline_para}
f(x) = \sum_{\ell=1}^{d+1}a^i_\ell\bigg(x-\frac{n_i}{n}\bigg)^{\ell-1},\quad x\in\bigg(\frac{n_i}{n}, \frac{n_{i+1}}{n}\bigg],
\end{align} 
and focus on a generic piece $(n_i;n_{i+1}]$ at the sequence level. Here the superscript $i$ represents `the $(i+1)$-th piece $(n_i;n_{i+1}]$' and the subscript $\ell$ represents `the $\ell$-th coefficient' in the polynomial. We aim at obtaining the following estimates:
\begin{align}\label{eq:demo_estimate}
1\geq c\cdot (a^i_\ell)^2((n_{i+1} - n_i)/n)^{2(\ell-1)}(n_{i+1}\wedge (n-n_i)),\quad \ell \in[1;d+1],
\end{align}
with some $c = c(d)$. Once these estimates are obtained, one can immediately apply Theorem \ref{thm:lil} to obtain a $\log\log (16n)$ bound on the complexity width on $(n_i;n_{i+1}]$.

In \eqref{eq:demo_estimate}, the $(d-1)$-th order differentiability at each inner knot naturally divides the coefficients into two groups, the `shared coefficients' $\{a_\ell^i\}_{\ell \in [1;d]}$ and the `nuisance coefficient' $a_{d+1}^i$. This suggests the following two-step proof strategy:
\begin{enumerate}
	\item[(i)] First, we show that the estimate for the second group, $a^i_{d+1}$, follows from that of the first group; cf. Lemma \ref{lemma:last_coef} ahead.
	\item[(ii)] Second, we obtain estimates in \eqref{eq:demo_estimate} for $\ell\in[1;d]$ with the choice $k_0=d+2$; cf. Lemma \ref{lemma:coef_est_1} ahead. 
\end{enumerate}
In the proof below, we will see clearly why $k_0 = d+2$ is the maximal number of pieces where the estimates in (\ref{eq:demo_estimate}) are achievable. At a high level, the coefficient estimates $\{a_\cdot^i\}$ on the piece $(n_i; n_{i+1}]$ necessarily depend on coefficient estimates at locations to the both sides of $i$. The passage of such information, for example from the rightmost knot, is precisely characterized in Lemma \ref{lemma:quad_forms} ahead through a set of quadratic forms, which are obtained via `iterative cancellation' to be detailed in Section \ref{subsec:noshape_ground}. The transition boundary $k_0$ is then determined via `counting of quadratic forms' (cf. (\ref{ineq:proof_boundary}) in the main proof ahead) that mirrors the DOF calculation in \eqref{eq:dof}, thereby unifying the heuristics in the upper and lower bounds.

\subsection{Reduction to complexity width}\label{sec:cw}

 We first introduce some notation. For any fixed $\theta_0\in \R^n$, let 
 $\theta_\ora\equiv \theta_\ora(\theta_0)\in\Theta(d,d_0,k)$ be an oracle such that $\inf_{\theta\in\Theta(d,d_0,k)}\pnorm{\theta - \theta_0}{}$ is achieved, 
 with knots $0 = n_0\leq n_1\leq \ldots\leq n_k = n$. For each $\theta\in\RR^n$, define $\theta_{[j]}$ as the sub-vector $(\theta_i)_{i\in(n_j;n_{j+1}]}$ and $v_j(\theta)\equiv v_j(\theta;\theta_\ora)\equiv (\theta-\theta_\ora)_{[j]}/\|(\theta-\theta_\ora)_{[j]}\|$.

The following result is a standard reduction principle for the LSE tailored to the class of splines. Its proof can be found in Appendix \ref{sec:proof_noshape}.

\begin{proposition}\label{prop:reduction}
Fix any $\theta_0\in\RR^n$.  Let $\widehat{\theta}\equiv \widehat{\theta}(\Theta(d,d_0,k),Y)$ be the LSE as defined in \eqref{eq:lse} under the experiment \eqref{eq:model} with truth $\theta_0$. Then, for any $\delta>0$, there exists some $C=C(\delta)>0$ such that
\begin{align*}
\E_{\theta_0}\|\widehat{\theta} - \theta_0\|^2 \leq (1+\delta)\|\theta_\ora - \theta_0\|^2 + C\cdot\E\sup_{\theta\in\Theta(d,d_0,k)}\sum_{j=0}^{k-1}\big(\varepsilon_{[j]}\cdot v_j(\theta)\big)^2.
\end{align*}
\end{proposition}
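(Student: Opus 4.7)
The proposal is to execute the classical ``basic inequality'' argument for a constrained least squares estimator, adapted to the piecewise structure induced by the oracle's knots.

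Concretely, I would first exploit the defining optimality of $\widehat{\theta}$ over $\Theta(d,d_0,k)$: since $\theta_\ora\in\Theta(d,d_0,k)$, the basic inequality $\|Y-\widehat{\theta}\|^2\leq \|Y-\theta_\ora\|^2$ combined with $Y=\theta_0+\varepsilon$ and a bit of algebra yields
\begin{equation*}
\|\widehat{\theta}-\theta_0\|^2\leq \|\theta_\ora-\theta_0\|^2 + 2\varepsilon^\top(\widehat{\theta}-\theta_\ora).
\end{equation*}
The goal then becomes to control the cross term $2\varepsilon^\top(\widehat{\theta}-\theta_\ora)$ in a way that respects the spline geometry on the oracle partition.

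Next, I would split the inner product along the oracle's knots $0=n_0\leq\cdots\leq n_k=n$: writing $\varepsilon^\top(\widehat{\theta}-\theta_\ora)=\sum_{j=0}^{k-1}\varepsilon_{[j]}^\top(\widehat{\theta}-\theta_\ora)_{[j]}$ and factoring each block by its norm, one gets
\begin{equation*}
\varepsilon^\top(\widehat{\theta}-\theta_\ora)=\sum_{j=0}^{k-1}\|(\widehat{\theta}-\theta_\ora)_{[j]}\|\cdot\bigl(\varepsilon_{[j]}\cdot v_j(\widehat{\theta})\bigr),
\end{equation*}
with the convention that a term vanishes whenever the corresponding block difference is zero so that $v_j(\widehat{\theta})$ is well defined. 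Applying the elementary inequality $2ab\leq \eta a^2+\eta^{-1}b^2$ block by block with a tunable $\eta>0$, and then summing, bounds this by $\eta\|\widehat{\theta}-\theta_\ora\|^2+\eta^{-1}\sum_{j=0}^{k-1}(\varepsilon_{[j]}\cdot v_j(\widehat{\theta}))^2$.

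Finally, I would use the triangle bound $\|\widehat{\theta}-\theta_\ora\|^2\leq 2\|\widehat{\theta}-\theta_0\|^2+2\|\theta_\ora-\theta_0\|^2$ to absorb the quadratic term in $\widehat{\theta}$ into the left-hand side (choosing $\eta=\eta(\delta)$ small enough to produce the factor $1+\delta$ and a constant $C(\delta)$ in front of the remainder), then pass to the supremum over $\theta\in\Theta(d,d_0,k)$ in the stochastic term (legitimate because $\widehat{\theta}\in\Theta(d,d_0,k)$), and take expectations. Nothing here is truly hard; the only mildly delicate point is to arrange the AM--GM constants and the triangle inequality so that the leading coefficient in front of $\|\theta_\ora-\theta_0\|^2$ is exactly $1+\delta$ rather than a larger multiple, which is a short but slightly tedious bookkeeping exercise. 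The rest is direct.
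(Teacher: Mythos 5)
Your proposal is correct and follows essentially the same route as the paper: basic inequality, block decomposition along the oracle's knots, AM--GM with a tunable $\eta$, the triangle bound $\|\widehat{\theta}-\theta_\ora\|^2\leq 2\|\widehat{\theta}-\theta_0\|^2+2\|\theta_\ora-\theta_0\|^2$, absorption into the left-hand side, and finally passing to the supremum and taking expectations. The paper's explicit choice $\eta=\delta/(2\delta+4)$ yields the leading factor $(1+2\eta)/(1-2\eta)=1+\delta$, matching the bookkeeping you flagged as the only mildly delicate point.
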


Now, note that each $v_j(\theta)$ is also a spline with unit norm and the same parameters $(d,d_0,k)$ (rigorously speaking, the two end pieces of $v_j(\theta)$ may have length smaller than $d+1$, but these pieces are negligible since there are at most $2k$ of them and each only contributes a constant (up to $d$) factor to the complexity width). Therefore, in view of Proposition \ref{prop:reduction}, the $\log\log (16n)$ part of Theorem \ref{thm:noshape_upper} for $k\leq k_0$ is immediately implied by the following result by noticing that $\Theta(d,d_0,k)\subset \Theta(d,d_0,k_0)$ for all $k\leq k_0$. 

\begin{proposition}\label{prop:width}
There exists some $C = C(d)$ such that
\begin{align*}
\E\sup_{\theta\in\Theta(d,d_0,k_0),\|\theta\|\leq 1} \parr*{\varepsilon \cdot \theta}^2 \leq C\log\log (16n).
\end{align*}
\end{proposition}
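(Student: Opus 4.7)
The plan is to execute the two-step strategy outlined in Section \ref{sec:intuition}. For any $\theta\in\Theta(d,d_0,k_0)$ with $\pnorm{\theta}{}\leq 1$ and knots $0=n_0\leq\ldots\leq n_{k_0}=n$, I parametrize the underlying spline piecewise as in \eqref{eq:spline_para}: $f(x)=\sum_{\ell=1}^{d+1}a_\ell^i(x-n_i/n)^{\ell-1}$ on $(n_i/n,n_{i+1}/n]$. Expanding $\varepsilon\cdot\theta$ in this parametrization and applying the triangle inequality yields, up to a constant depending only on $d$,
\[
(\varepsilon\cdot\theta)^2 \lesssim_d \sum_{i=0}^{k_0-1}\sum_{\ell=1}^{d+1} R_{i,\ell}(\theta)\cdot W_{i,\ell}^2,
\]
where $R_{i,\ell}(\theta)\equiv (a_\ell^i)^2\big((n_{i+1}-n_i)/n\big)^{2(\ell-1)}(n_{i+1}\wedge(n-n_i))$ is a ``coefficient'' factor and $W_{i,\ell}$ is the normalized ``noise'' factor $n^{-(\ell-1)}\big|\sum_{j\in(n_i;n_{i+1}]}(j-n_i)^{\ell-1}\varepsilon_j\big|/\big[(n_{i+1}-n_i)^{\ell-1}(n_{i+1}\wedge(n-n_i))^{1/2}\big]$.

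The analytic heart of the proof is to show that $R_{i,\ell}(\theta)\lesssim_d 1$ uniformly over $\theta$ in the class, for all admissible $i,\ell$. Following the strategy of Section \ref{sec:intuition}, this is done in two sub-steps. First, the highest-order (``nuisance'') coefficient $a_{d+1}^i$ is bounded in terms of $\{a_\ell^i\}_{\ell\in[1;d]}$ by combining the piece-level bound $\pnorm{\theta_{[i]}}{}\leq 1$ with a triangle inequality. Second, the ``shared'' coefficients $\{a_\ell^i\}_{\ell\in[1;d]}$ on a generic middle piece are related, via the $d_0+1$ continuity constraints at each inner knot, to the coefficients on one of the two end pieces through an iterative cancellation. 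Because only the nuisance coefficient is nontrivial on each end piece, this propagation yields quadratic forms in the shared coefficients that are controlled by $\pnorm{\theta}{}^2$ with the key scaling $n_{i+1}\wedge(n-n_i)$ in the denominator (as opposed to the crude $n_{i+1}-n_i$), in line with the refined estimates \eqref{eq:con_est}. The transition boundary $k_0=\floor{(d+1)/(d-d_0)}+1$ emerges precisely as the largest number of pieces for which this cancellation closes within the budget imposed by the continuity constraints, mirroring the DOF calculation in \eqref{eq:dof}.

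With these coefficient estimates in hand, for each fixed $(i,\ell)$ the supremum of $W_{i,\ell}^2$ over $0\leq n_i<n_{i+1}\leq n$ has expectation bounded by $C(d)\log\log(16n)$ via Theorem \ref{thm:lil} applied with $\psi(t)=t^2$, since $\{\varepsilon_j\}$ is sub-Gaussian with parameter $1$. Since $k_0$ and $d+1$ depend only on $d$, summing the $(d,k_0)$-many terms yields the claim.

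The main obstacle lies in the second sub-step above: the iterative cancellation must be organized so that the resulting quadratic forms in the polynomial coefficients are uniformly bounded by $\pnorm{\theta}{}^2$ with the correct scaling. This hinges on using both end knots as ``anchors'' with vanishing shared coefficients, and the DOF counting shows that for $k>k_0$ the anchoring mechanism breaks down, forcing the cruder estimates of type \eqref{eq:discon_est} and consequently the slower $\log(en)$ rate. Executing this cleanly likely requires developing auxiliary lemmas characterizing the explicit form of these quadratic forms (one for the shared coefficients and one reducing the nuisance coefficient to the shared ones), analogous to the linear case worked out in Section \ref{sec:linear-case-S5}.
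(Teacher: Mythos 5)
Your proposal follows essentially the same strategy as the paper's proof: reduce to the piecewise decomposition $(\varepsilon\cdot\theta)^2 \lesssim_d \sum_{i,\ell} R_{i,\ell}(\theta)W_{i,\ell}^2$, establish the sharp coefficient bounds $R_{i,\ell}(\theta)\lesssim_d 1$ with the crucial $n_{i+1}\wedge(n-n_i)$ scaling via the two-step programme (the nuisance-to-shared reduction of Lemma \ref{lemma:last_coef}, then the iterative-cancellation quadratic forms of Lemmas \ref{lemma:quad_forms} and \ref{lemma:coef_est_1}), and finally invoke Theorem \ref{thm:lil} with $\psi(t)=t^2$. One small inaccuracy in your description of the anchoring mechanism: the end pieces do not have ``vanishing shared coefficients''; rather, the paper works under the reduction (\ref{cond:end_long}) guaranteeing the two end pieces are the longest, so that Lemma \ref{lemma:diagonal} applied to them yields the baseline estimates that seed the cancellation and the induction---and when (\ref{cond:end_long}) fails, an additional partitioning argument (steps S1--S3 after Lemma \ref{lemma:coef_est_1}) relocates the anchors, a point your sketch does not address but which is needed for the argument to close.
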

The following two subsections present the main ingredients to the proof of Proposition \ref{prop:width}, whose details will be presented in Section \ref{subsec:noshape_main}.

\subsection{Groundwork}\label{subsec:noshape_ground}

Fix any $f\in\cal{F}_n(d,d_0,k_0)$ with knots $0=n_0/n\leq n_1/n\leq \ldots\leq n_{k_0}/n =1$ and recall the parametrization (\ref{eq:spline_para}). Due to the regularity constraints, similar relations as the linear equations of the type \eqref{eq:linear_constraint} exist between adjacent knots. We use the notation $\co{a^i_p;a^{i-1}_q}$ to denote the coefficient of $a^{i-1}_q$ in the linear equation of $a^i_p$, i.e., $a^i_p = \sum_q \co{a^i_p;a^{i-1}_q} a^{i-1}_q$. The following lemma makes explicit this dependence. Its proof and proofs for other lemmas in this subsection are contained in Appendix \ref{sec:proof_noshape}. We write 
\begin{align}\label{eq:nxx}
n_{i;j}\equiv(n_i-n_j)/n.
\end{align}

\begin{lemma}\label{lemma:para_coef}
For any $i\in[1;k_0-1]$, $p\in[1;d_0+1]$, and $q\in[1;d+1]$,
\begin{align*}
\co{a^i_p; a^{i-1}_q} = {q-1\choose p-1}n_{i;i-1}^{q-p}\bm{1}_{q\geq p}.
\end{align*}
\end{lemma}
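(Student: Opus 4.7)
The plan is to compute the $(p-1)$-th derivative of the spline on pieces $i-1$ and $i$ separately at the inner knot $n_i/n$, then match them using the $d_0$-differentiability condition, which applies since $p-1 \in [0;d_0]$ by hypothesis.

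First, I would take the parametrization on piece $i-1$, namely $f(x) = \sum_{\ell=1}^{d+1} a^{i-1}_\ell (x - n_{i-1}/n)^{\ell-1}$, and differentiate $(p-1)$ times term-by-term. The contribution of the $\ell$-th term vanishes for $\ell < p$, and for $\ell \geq p$ it equals $a^{i-1}_\ell \cdot \frac{(\ell-1)!}{(\ell-p)!}(x - n_{i-1}/n)^{\ell-p}$. Evaluating at $x = n_i/n$ yields
\begin{align*}
f^{(p-1)}\big((n_i/n)_-\big) = \sum_{\ell=p}^{d+1} a^{i-1}_\ell \cdot \frac{(\ell-1)!}{(\ell-p)!}\, n_{i;i-1}^{\ell-p}.
\end{align*}
On piece $i$, the same calculation at $x = n_i/n$ collapses dramatically since $(x - n_i/n)^{\ell - p}$ vanishes unless $\ell = p$, giving simply $f^{(p-1)}\big((n_i/n)_+\big) = a^i_p \cdot (p-1)!$.

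Next, invoking the continuity of the $(p-1)$-th derivative at the inner knot (permitted because $p \in [1;d_0+1]$ so $p-1 \in [0;d_0]$), I would equate the two expressions and divide through by $(p-1)!$, obtaining
\begin{align*}
a^i_p = \sum_{\ell=p}^{d+1} \binom{\ell-1}{p-1} n_{i;i-1}^{\ell - p}\, a^{i-1}_\ell.
\end{align*}
Reading off the coefficient of $a^{i-1}_q$ on the right-hand side yields the claimed formula, with the indicator $\bm{1}_{q\geq p}$ accounting for the vanishing of terms with $q < p$.

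The argument is entirely elementary; the only mild subtlety is bookkeeping of the binomial and factorial factors that arise from iterated differentiation of $(x - n_{i-1}/n)^{\ell-1}$. No step should present any real obstacle.
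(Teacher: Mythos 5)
Your proof is correct and follows essentially the same route as the paper: parametrize $f$ on the two adjacent pieces, differentiate $(p-1)$ times, evaluate the one-sided limits at the inner knot $n_i/n$, equate them by the $d_0$-differentiability condition (valid since $p-1\in[0;d_0]$), and divide by $(p-1)!$ to read off $\binom{q-1}{p-1}n_{i;i-1}^{q-p}\bm{1}_{q\geq p}$. No gaps.
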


The next Lemma \ref{lemma:quad_forms} provides, as described in the proof outline in Section \ref{sec:general-case-S5}, the exact forms of the quadratic forms obtained by `iterative cancellation' from right. These quadratic forms lay the foundation for coefficient estimates of type \eqref{eq:demo_estimate}. For the rest of this section, we reserve the notation $s$ for the number of `iterative cancellation' performed. 

Before stating the general formulation in Lemma \ref{lemma:quad_forms}, we first present the illustrative case of cubic spline $(d = 3, k_0 = 5)$ in the sequence space with unit norm. We detail below the starting point $(s= 0)$ and the first two steps of cancellation $(s\in\{1,2\})$. Following the proof outline in Section \ref{sec:general-case-S5}, we separate the quadratic forms that only involve the `shared coefficients' $\{a_\ell^i\}_{\ell \in [1;3]}$ and those that also involve the `nuisance coefficient' $a_4^i$.
\begin{itemize}	
	\item ($s=0$). 
	The $\ell_2$ constraint on $(n_4;n_5]$ for the signal ($\|\theta\|_{(n_4;n_5]}\leq \|\theta\|= 1$) provides control on the following $4$ quadratic forms of length $1$:
	\begin{align*}
	1 \geq c\cdot\bigg[\Big\{(n-n_4)(a^4_1)^2 + \frac{(n-n_4)^3}{n^2}(a^4_2)^2 + \frac{(n-n_4)^5}{n^4}(a^4_3)^2\Big\}+\frac{(n-n_4)^7}{n^6}(a^4_4)^2\bigg].
	\end{align*}
	\item ($s=1$). For the first cancellation, we have, by Lemma \ref{lemma:para_coef},
	\begin{align}\label{ineq:general_sketch_2}
	\begin{pmatrix}
	a_1^4\\
	a_2^4\\
	a_3^4
	\end{pmatrix}
	= 
	\begin{pmatrix}
	1 & n_{4;3} & n_{4;3}^2 & n_{4;3}^3\\
	0 & 1 & 2n_{4;3} & 3n_{4;3}^2\\
	0 & 0 & 1 & 3n_{4;3}
	\end{pmatrix}
	\begin{pmatrix}
	a_1^3\\
	a_2^3\\
	a_3^3\\
	a_4^3
	\end{pmatrix}
	.
	\end{align}
	The identity \eqref{ineq:general_sketch_2} enables us to first find a linear combination of $(a^4_2,a^4_3)$ to cancel $a^3_4$, and then to find another linear combination of $(a^4_1,a^4_2,a^4_3)$ to cancel both $a^3_3$ and $a^3_4$. These, along with direct expansion of the term $(a^4_3)^2(n-n_4)^5/n^4$ using \eqref{ineq:general_sketch_2}, leave us with $3$ quadratic forms of length $2$:
	\begin{align*}
	1\geq c\cdot\bigg[\Big\{(n-n_4)\big(3a^3_1 + n_{4;3}a^3_2\big)^2 + \frac{(n-n_4)^3}{n^2}\big(a^3_2 + n_{4;3}a^3_3\big)^2\Big\} +\frac{(n-n_4)^5}{n^4}\big(a^3_3 + 3n_{4;3}a^3_4\big)^2 \bigg].
	\end{align*}
	\item ($s=2$). For the second cancellation, we have, by Lemma \ref{lemma:para_coef} again,
	\begin{align*}
	\begin{pmatrix}
	a_1^3\\
	a_2^3\\
	a_3^3
	\end{pmatrix}
	= 
	\begin{pmatrix}
	1 & n_{3;2} & n_{3;2}^2 & n_{3;2}^3\\
	0 & 1 & 2n_{3;2} & 3n_{3;2}^2\\
	0 & 0 & 1 & 3n_{3;2}
	\end{pmatrix}
	\begin{pmatrix}
	a_1^2\\
	a_2^2\\
	a_3^2\\
	a_4^2
	\end{pmatrix}
	.
	\end{align*}
	Then, finding a linear combination of $(a^3_1,a^3_2,a^3_3)$ to cancel $a^2_4$ and directly expanding $\big(a^3_2 + n_{4;3}a^3_3\big)^2(n-n_4)^3/n^2$, we obtain $2$ quadratic forms of length $3$:
	\begin{equation*}
	\begin{aligned}
	1 \geq c\cdot &\bigg[ (n-n_4)\bigg(3a^2_1 + (2n_{3;2} + n_{4;3})a^2_2 + (n_{3;2}^2+ n_{3;2}n_{4;3})a^2_3\bigg)^2 \\
	&+\frac{(n-n_4)^3}{n^2}\bigg(a^2_2 + (2n_{3;2} + n_{4;3})a^2_3 + (3n_{3;2}^2+ 3n_{3;2}n_{4;3})a^2_4\bigg)^2\bigg].
	\end{aligned}
	\end{equation*}
\end{itemize}

To state the above cancellation scheme for general $d$ and $d_0$, some further notation is introduced. Fix $d,d_0$, and the resulting $k_0$ as defined in \eqref{eq:boundary}. Define the sequence $\{\overline{\beta}^s_j\}$, $s\in[0;\floor{(d_0+1)/(d-d_0)}]$ recursively as follows. Let $\overline{\beta}^s_0\equiv 1$, 
\begin{align}\label{eq:beta_right}
\overline{\beta}^s_j &\equiv \sum_{\ell=0}^j \binom{s(d-d_0)-\ell}{j-\ell} n_{k_0-s;k_0-1-s}^{j-\ell} \overline{\beta}_{\ell}^{s-1}
\end{align}
for $j\in[1;s(d-d_0)]$, and $\overline{\beta}^s_j \equiv 0$ for $j > s(d-d_0)$. Further define, for every $i\in[1;(s+1)d_0 - sd + 1]$ and $j\in[0;s(d-d_0)]$,
\[
D(i,0)\equiv 1, \quad D(i,j) \equiv \frac{\overline{\odot}(i;j)}{\underline{\odot}(d+1-i;j)} \quad \text{ for }j\geq 1.
\]
Lastly, let $\overline{\beta}^s_{i,j}\equiv D(i,j)\overline{\beta}^s_j$.

We work under the extra condition that 
\begin{align}\label{cond:end_long}
n_{1;0}\wedge n_{k_0;k_0-1} \geq \max\{n_{2;1},\ldots,n_{k_0-1;k_0-2}\}.
\end{align}
We remark that condition (\ref{cond:end_long}) is made merely for presentational simplicity; see the comments after Lemma \ref{lemma:coef_est_1} ahead for detailed discussion of this condition. 

\begin{lemma}\label{lemma:quad_forms}
Suppose (\ref{cond:end_long}) holds. Fix $d$, $d_0$, and $k_0$ as defined in \eqref{eq:boundary}, and any $\theta\in\Theta(d,d_0,k_0)$ such that $\|\theta\|\leq 1$. Then, there exists some $c = c(d)$ such that, for any $s\in[0;\floor{(d_0+1)/(d-d_0)}]$,
\begin{equation}\label{eq:quad_forms_right}
\begin{aligned}
1&\geq c\bigg\{\sum_{i=1}^{(s+1)d_0-sd+1}+\sum_{i=(s+1)d_0-sd+2}^{sd_0-(s-1)d+1} \bigg\} \frac{(n-n_{k_0-1})^{2i-1}}{n^{2(i-1)}}\bigg(\sum_{j=0}^{s(d-d_0)} \overline{\beta}_{i,j}^s a_{i+j}^{k_0-1-s} \bigg)^2.
\end{aligned}
\end{equation}
\end{lemma}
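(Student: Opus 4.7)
The proof proceeds by induction on $s\in[0;\floor{(d_0+1)/(d-d_0)}]$. Write $L\equiv n-n_{k_0-1}$ and $h_t\equiv n_{k_0-t;k_0-1-t}$ for brevity. The base case $s=0$ follows from $\|\theta\|^2_{(n_{k_0-1};n_{k_0}]}\leq \|\theta\|^2\leq 1$ combined with a standard discrete polynomial estimate: on the piece $(n_{k_0-1},n_{k_0}]$ of length $L$, the sample values are polynomial evaluations $\sum_{\ell=1}^{d+1}a^{k_0-1}_\ell\big((j-n_{k_0-1})/n\big)^{\ell-1}$, whose squared $\ell_2$-norm on an equispaced grid of length $L$ with spacing $1/n$ is bounded below by $c\sum_{i=1}^{d+1}(a^{k_0-1}_i)^2 L^{2i-1}/n^{2(i-1)}$ for some $c=c(d)$, via a Vandermonde-type lower bound (equivalently, expansion in a discrete orthonormal polynomial basis on the scaled grid). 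Since $\overline{\beta}^0_{i,j}=\bm{1}_{j=0}$, this matches \eqref{eq:quad_forms_right} at $s=0$.

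\textbf{Inductive step.} The key observation is that the ``shared'' linear forms at level $s-1$, indexed by $i\in[1;sd_0-(s-1)d+1]$, involve only the coefficients $a^{k_0-s}_p$ with $p\leq d_0+1$: the maximum index satisfies $i+j\leq (sd_0-(s-1)d+1)+(s-1)(d-d_0)=d_0+1$. Hence Lemma~\ref{lemma:para_coef} applies and one may substitute
\[
a^{k_0-s}_p=\sum_{q=p}^{d+1}\binom{q-1}{p-1}h_s^{q-p}a^{k_0-1-s}_q,\qquad p\in[1;d_0+1].
\]
After substitution, each shared linear form $F^{s-1}_i\equiv\sum_{j}\overline{\beta}^{s-1}_{i,j}a^{k_0-s}_{i+j}$ becomes a linear combination of $\{a^{k_0-1-s}_q\}_{q=i}^{d+1}$. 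For each target index $i'\in[1;(s+1)d_0-sd+1]$, one then seeks scalars $(c_r)_{r=0}^{d-d_0}$ such that $G^s_{i'}\equiv\sum_{r=0}^{d-d_0}c_r F^{s-1}_{i'+r}$ has vanishing coefficients on $a^{k_0-1-s}_q$ for $q\in[d_0+2;d+1]$; this is a homogeneous $(d-d_0)\times(d-d_0+1)$ system, admitting a nonzero solution by dimension counting, and the surviving linear form is $\sum_{j=0}^{s(d-d_0)}\overline{\beta}^s_{i',j}a^{k_0-1-s}_{i'+j}$. The corresponding weight $L^{2i'-1}/n^{2(i'-1)}$ is preserved up to constants because all extra factors $h_s^{q-p}$ introduced by the substitution are bounded by $(L/n)^{q-p}$, thanks to condition \eqref{cond:end_long}. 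The single nuisance form at level $s$ is produced by direct substitution into the corresponding level-$(s-1)$ form (no cancellation needed), with the same bookkeeping of $h_s$-powers.

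\textbf{Main obstacle.} The crux of the proof is the combinatorial identification of the resulting coefficients with the recursion \eqref{eq:beta_right}: one must verify that the (one-parameter family of) solutions to the homogeneous cancellation system is proportional to the prescribed $\{\overline{\beta}^s_{i',j}\}$. This reduces to a Vandermonde convolution identity for the binomials $\binom{q-1}{p-1}$, checked by an explicit induction together with Pascal-type rewrites, and the non-degeneracy of the relevant $(d-d_0)\times(d-d_0)$ submatrix (needed to make the cancellation feasible) again follows from the Vandermonde structure of Lemma~\ref{lemma:para_coef}. The stated range of $s$ is exactly matched to this scheme: each step removes $d-d_0$ shared forms from a starting count of $d+1$, so the induction naturally terminates at $s=\floor{(d_0+1)/(d-d_0)}$, which in turn is the source of the boundary $k_0$ in \eqref{eq:boundary}.
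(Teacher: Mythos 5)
Your base case (Lemma~\ref{lemma:diagonal} on the last piece) and the overall scheme---substitute via Lemma~\ref{lemma:para_coef}, then cancel via a linear combination of the level-$(s-1)$ forms, keeping the weights controlled through~\eqref{cond:end_long}---match the paper's strategy (Lemma~\ref{lemma:cancellation}). However, the cancellation step is miscounted. The target form $\sum_{j=0}^{s(d-d_0)}\overline{\beta}^s_{i',j}a^{k_0-1-s}_{i'+j}$ is supported on $q\in[i';\,i'+s(d-d_0)]$, and $i'+s(d-d_0)\leq d_0+1$ with equality only at the top index $i'=(s+1)d_0-sd+1$. Hence $G^s_{i'}$ must have vanishing coefficients on $a^{k_0-1-s}_q$ for every $q\in[i'+s(d-d_0)+1;\,d+1]$, i.e.\ $d+1-i'-s(d-d_0)$ constraints, not merely the $d-d_0$ nuisance indices $q\in[d_0+2;d+1]$. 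The shortfall, $(s+1)d_0-sd+1-i'$, is strictly positive for every $i'$ below the boundary, so with only $d-d_0+1$ free scalars $c_r$ the solution of your homogeneous system generically leaves non-zero residual coefficients on $q\in[i'+s(d-d_0)+1;d_0+1]$, and the identification with $\{\overline{\beta}^s_{i',j}\}$ you plan to verify is simply false for this choice of combination.

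The cubic case $d=3$, $d_0=2$, $k_0=5$, $s=1$, $i'=1$ already exhibits this. Lemma~\ref{lemma:para_coef} gives $a^4_1=a^3_1+n_{4;3}a^3_2+n_{4;3}^2a^3_3+n_{4;3}^3a^3_4$ and $a^4_2=a^3_2+2n_{4;3}a^3_3+3n_{4;3}^2a^3_4$. Your two-term combination $c_0a^4_1+c_1a^4_2$ cancels $a^3_4$ with $c_1=-c_0n_{4;3}/3$, but the residual coefficient on $a^3_3$ is then $c_0n_{4;3}^2/3\neq 0$, whereas the target (the paper's $s=1$ display in Section~\ref{subsec:noshape_ground}) is $3a^3_1+n_{4;3}a^3_2$, with no $a^3_3$ term; one must include $a^4_3=a^3_3+3n_{4;3}a^3_4$ as a third form to also kill $a^3_3$. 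In general the level-$s$ form at index $i'$ requires combining the entire tail $k\in[i';\,sd_0-(s-1)d+1]$ of level-$(s-1)$ shared forms---$sd_0-(s-1)d+2-i'$ of them against $sd_0-(s-1)d+1-i'$ cancellations, a count that depends on $i'$---and the combinatorial matching with the $\overline{\beta}$-recursion must then be verified for these variable-length combinations, which the paper does with explicit coefficients in Lemma~\ref{lemma:beta_form} rather than by an abstract dimension count.
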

\begin{remark}\label{remark:quad_forms}
Several remarks for the quadratic forms above are in order.
\begin{itemize}
    \item[(i)] The quadratic forms in \eqref{eq:quad_forms_right} are obtained via iterative cancellation from knot $n_{k_0-1}$. 
    \item[(ii)]  In a generic $\overline{\beta}^s_{i,j}$, the superscript $s$ marks the counts of cancellations already performed, $i$ indicates the $i$-th quadratic form, and $j$ indicates the coefficient for the $j$-th component in this quadratic form.
    \item[(iii)] In \eqref{eq:quad_forms_right}, we intentionally separate the indices $i\in[1;(s+1)d_0 - sd + 1]$ and $i\in[(s+1)d_0 - sd + 2;sd_0 - (s-1)d + 1]$ since the first set of quadratic forms only involves the `shared coefficients' $a^{\cdot}_j$ with $j\in[1;d_0+1]$.
    \item[(iv)] Every time $s$ grows by $1$, the first summand of \eqref{eq:quad_forms_right} has $(d-d_0)$ fewer quadratic forms with each one comprising of $(d-d_0)$ more components. 
\end{itemize}
\end{remark}

\subsection{Key estimates}\label{subsec:noshape_key}

Recall the coefficient sequence $\{a^i_\ell\}_{i\in[0;k_0-1],\ell\in[1;d+1]}$ defined in \eqref{eq:spline_para}. As described in Section \ref{sec:intuition}, we aim to obtain sharp estimates of type (\ref{eq:demo_estimate}). For any $a,b\in[1;n]$, define
\begin{align*}
M(a,b)\equiv (a\wedge (n-b))^{1/2}.
\end{align*}
The first result below reduces the task of obtaining (\ref{eq:demo_estimate}) for all the coefficients down to estimating only the `shared coefficients' $\{a^\cdot_\ell\}_{\ell\in[1;d_0+1]}$, from which the estimates for `nuisance coefficients' $\{a^\cdot_\ell\}_{\ell\in [d_0+2;d+1]}$ can be derived. Its proof can be found in Appendix \ref{sec:proof_noshape}.
\begin{lemma}\label{lemma:last_coef} 
Fix any $i\in[1;k_0-2]$. Suppose there exists some $c = c(d)$ such that for every $\ell\in[1;d_0+1]$, it holds that $1\geq c(a^i_{\ell})^2n_{i+1,i}^{2(\ell-1)}M^2(n_{i+1},n_i)$. Then, there exists some $c^\prime = c^\prime(d)$ such that
\begin{align*}
1\geq c^\prime(a^i_\ell)^2n_{i+1,i}^{2(\ell-1)}M^2(n_{i+1},n_i)
\end{align*}
for every $\ell\in [d_0+2; d+1]$.
\end{lemma}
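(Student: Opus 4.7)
The plan is to recover the nuisance coefficients from the shared ones by inverting the continuity relations at the inner knot $n_{i+1}/n$, which exists because $i\leq k_0-2$. First, I would apply Lemma \ref{lemma:para_coef} at $n_{i+1}$, which yields the $d_0+1$ linear identities
\begin{align*}
a^{i+1}_p = \sum_{q=p}^{d+1} \binom{q-1}{p-1} n_{i+1;i}^{q-p}\, a^i_q, \qquad p \in [1;d_0+1].
\end{align*}
Splitting the inner sum at $q=d_0+1$ and moving the shared part to the left-hand side, this becomes a rectangular linear system in the unknowns $\{a^i_q\}_{q \in [d_0+2;d+1]}$, with right-hand side involving $\{a^{i+1}_p\}_{p\in[1;d_0+1]}$ together with the shared coefficients $\{a^i_q\}_{q \in [1;d_0+1]}$ of piece $i$.

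Next, I would verify dimensional compatibility. The condition $i \in [1;k_0-2]$ forces $k_0 \geq 3$, which in view of $k_0 = \floor{(d+1)/(d-d_0)} + 1$ is equivalent to $d_0+1 \geq d-d_0$; hence there are at least as many continuity equations as nuisance unknowns. After multiplying the $p$-th equation by $n_{i+1;i}^{p-1}$ and introducing the rescaled unknowns $\td{a}_q \equiv n_{i+1;i}^{q-1} a^i_q$, the coefficient matrix reduces to $\{\binom{q-1}{p-1}\}_{p\in[1;d-d_0],\,q\in[d_0+2;d+1]}$---a matrix of pure binomial entries whose non-degeneracy follows from a standard Vandermonde-type argument with inverse norm depending only on $d$. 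Solving, each $\td{a}_q$ is expressed as a bounded linear combination of $\{n_{i+1;i}^{p-1} a^{i+1}_p\}_{p \in [1;d-d_0]}$ and $\{n_{i+1;i}^{q-1} a^i_q\}_{q \in [1;d_0+1]}$.

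The last step is to bound the right-hand side. The hypothesis of the lemma directly gives $|n_{i+1;i}^{q-1} a^i_q| \lesssim 1/M(n_{i+1},n_i)$ for shared $q$. To control $|n_{i+1;i}^{p-1} a^{i+1}_p|$ by the same quantity, I would invoke the analogous shared coefficient estimate on piece $i+1$---which in the surrounding argument for Proposition \ref{prop:width} is provided by Lemma \ref{lemma:coef_est_1} applied uniformly across interior pieces---together with condition (\ref{cond:end_long}), which forces the interior pieces to have length dominated by the two end pieces and thereby keeps $M(n_{i+2},n_{i+1})$, $M(n_{i+1},n_i)$, and the ratio $n_{i+2;i+1}/n_{i+1;i}$ mutually comparable up to constants depending only on $d$. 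Squaring the resulting estimate $|\td{a}_q| \lesssim 1/M(n_{i+1},n_i)$ yields the conclusion for all $\ell \in [d_0+2;d+1]$.

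The main obstacle I anticipate is precisely this transfer across adjacent pieces: one has to show that a bound of the form $|a^{i+1}_p|^2 n_{i+2;i+1}^{2(p-1)} M^2(n_{i+2},n_{i+1}) \lesssim 1$ implies $|n_{i+1;i}^{p-1} a^{i+1}_p| \lesssim 1/M(n_{i+1},n_i)$, and this is where the geometric content of (\ref{cond:end_long}) must be exploited carefully. All other steps are algebraic and reduce to the invertibility of a $(d-d_0)\times(d-d_0)$ binomial matrix with constants independent of the spline's knot configuration.
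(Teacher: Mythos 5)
Your algebraic setup — inverting the continuity relations at $n_{i+1}$ to express the nuisance coefficients $\{a^i_q\}_{q\in[d_0+2;d+1]}$ in terms of the shared coefficients of pieces $i$ and $i+1$, observing that the count $d_0+1\geq d-d_0$ follows from $k_0\geq 3$, and noting that the rescaled binomial matrix $\{\binom{q-1}{p-1}\}$ is a nondegenerate Pascal submatrix with inverse norm depending only on $d$ — is sound, and it is a genuinely different route from the paper. The paper never solves this linear system; instead it inducts on the coefficient index $\ell_0$ from $d_0+1$ to $d$, pairing the quadratic form of Lemma~\ref{lemma:quad_forms} with $s=k_0-1-i$ against the already-established estimates and isolating $a^i_{\ell_0+1}$ via repeated use of Lemma~\ref{lemma:two_quad}.

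However, the transfer step you correctly flag as the main obstacle is a genuine gap, and condition~\eqref{cond:end_long} does not close it. The shared-coefficient estimate on piece $i+1$ reads $|a^{i+1}_p|\, n_{i+2;i+1}^{p-1}\, M(n_{i+2},n_{i+1})\lesssim 1$, and you need $|a^{i+1}_p|\, n_{i+1;i}^{p-1}\lesssim 1/M(n_{i+1},n_i)$. Under~\eqref{cond:end_long} both $M$-factors are of order $\sqrt{n}$, so the transfer reduces to $(n_{i+1;i}/n_{i+2;i+1})^{p-1}\lesssim 1$ for $p$ up to $d-d_0$. But~\eqref{cond:end_long} only forces the two \emph{end} pieces to dominate all interior ones; it places no constraint whatsoever on the relative lengths of two adjacent \emph{interior} pieces. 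When $i<k_0-2$, both $(n_i;n_{i+1}]$ and $(n_{i+1};n_{i+2}]$ are interior, and the ratio $n_{i+1;i}/n_{i+2;i+1}$ can be of order $n$ (e.g.\ $n_{i+2}-n_{i+1}=d+1$ while $n_{i+1}-n_i\asymp n/k_0$). Thus for every $p\geq 2$ — which you need whenever $d-d_0\geq 2$ — the transfer fails. Your argument does succeed in the special cases $d_0=d-1$ (only $p=1$ is used) and $i=k_0-2$ (where $n_{i+2;i+1}=n_{k_0;k_0-1}$ is an end piece, so~\eqref{cond:end_long} gives $n_{i+1;i}\leq n_{i+2;i+1}$), but not in general.

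The underlying issue is that your argument looks only one piece to the right, while the sharp estimate on an interior piece genuinely depends on the whole chain of knot positions out to an end piece. The paper's quadratic form in Lemma~\ref{lemma:quad_forms} is precisely a device for carrying the $\ell_2$ control from the rightmost piece $(n_{k_0-1};n_{k_0}]$ — whose length is of order $n$ under~\eqref{cond:end_long} — across the intermediate interior pieces, with the $\overline{\beta}$ coefficients (controlled via Lemma~\ref{lemma:property_beta}) tracking the accumulated effect of the intervening knot spacings. That cascade is what the direct inversion of a single continuity system cannot reproduce.
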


 Following the preceding lemma, the next result, which builds on the groundwork derived in Lemma \ref{lemma:quad_forms}, makes use of an inductive argument to derive sharp estimates of the type \eqref{eq:demo_estimate} for $\{a^{i+1}_\ell\}_{\ell\in[1;d_0+1]}$ on a fixed target piece $(n_{i+1};n_{i+2}]$. To make the notation more accessible, we present here the special case $d_0 = d - 1$ (so that $k_0=d+2$) and defer the case of general $d_0$ to Appendix \ref{subsec:coef_est_1}.

\begin{lemma}\label{lemma:coef_est_1}
Suppose $d_0=d-1$ and (\ref{cond:end_long}) holds. Fix $i\in[0;d-1]$.  For some $c = c(d)$, the following estimates hold for all locations $1\leq j\leq i+1$:
\begin{align*}
1 &\geq c\max_{1\leq \ell\leq d} \bigg\{(a^j_\ell)^2\cdot  n_{i+2;j}^{2 \{(d-i) \wedge (\ell-1)\} } \cdot \bigg(\prod_{k=d-i+2}^{(d-j+2)\wedge \ell }n^2_{d+3-k; j}\bigg) \cdot n_{j+1;j}^{2(\ell-(d-j+2))_+} \cdot M^2(n_{j+1},n_{j}) \bigg\}.
\end{align*}
Here $\prod_{k=k_1}^{k_2}\equiv 1$ for $k_2<k_1$. In particular, for $j=i+1$:
\begin{align}\label{ineq:coef_est_prelim}
1 & \geq  c\max_{1\leq \ell\leq d} \bigg\{(a^{i+1}_\ell)^2 \cdot n_{i+2;i+1}^{2(\ell-1)} \cdot M^2(n_{i+2},n_{i+1}) \bigg\}.
\end{align}	
\end{lemma}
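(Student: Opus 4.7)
The plan is an induction on the cancellation step $s$ of Lemma \ref{lemma:quad_forms}, run from $s=0$ (rightmost piece $k_0-1=d+1$) toward larger $s$. At each stage I aim to bound all coefficients $a^{d+1-s}_\ell$, $\ell\in[1,d+1]$, at piece $d+1-s$ by combining three ingredients: the quadratic-form bounds (\ref{eq:quad_forms_right}) at the current $s$; the continuity relations of Lemma \ref{lemma:para_coef} between pieces $d+1-s$ and $d+2-s$; and Lemma \ref{lemma:last_coef} for the nuisance coefficient. The structural input that makes this work is the triangular pattern $\overline{\beta}^s_{i,0}=1$ built into (\ref{eq:beta_right}), which lets each form in the first summand of (\ref{eq:quad_forms_right}) be read as an implicit bound on one designated leading coefficient plus a tail of higher-index coefficients that are already controlled at earlier stages.

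The base case $s=0$ is immediate since the forms in (\ref{eq:quad_forms_right}) degenerate to single squared terms and directly give $1 \gtrsim (a^{d+1}_\ell)^2\, n_{d+2;d+1}^{2(\ell-1)}\,(n-n_{d+1})$ for $\ell\in[1,d]$, while Lemma \ref{lemma:last_coef} controls $a^{d+1}_{d+1}$. Hypothesis (\ref{cond:end_long}) is used here and throughout to guarantee $n-n_{d+1}\gtrsim n\cdot n_{j+1;j}$ for every $j$, which is precisely what lets the rightmost-piece length be replaced by the symmetric quantity $M^2(n_{j+1},n_j)=n_{j+1}\wedge(n-n_j)$ appearing in the claim.

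For the inductive step from piece $d+2-s$ to piece $d+1-s$, I use in tandem: (i) the $d$ upper-triangular continuity relations $a^{d+2-s}_p = \sum_{q\ge p}\binom{q-1}{p-1} n_{d+2-s;d+1-s}^{q-p}\, a^{d+1-s}_q$, which together with the inductive bounds on $a^{d+2-s}_{\cdot}$ give bounds on specific linear combinations of the shared coefficients at piece $d+1-s$ parametrized by the nuisance $a^{d+1-s}_{d+1}$; and (ii) the $d-s$ new triangular quadratic forms at piece $d+1-s$ supplied by Lemma \ref{lemma:quad_forms} at the current $s$, which isolate the low-index shared coefficients via $\overline{\beta}^s_{i,0}=1$. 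Processing $\ell=d,d-1,\ldots,1$ in order, the continuity relations in (i) handle indices near $d$ (where the leading term in the triangular inversion is dominant and only already-bounded quantities shift it), while the new quadratic forms in (ii) handle the smaller indices; Lemma \ref{lemma:last_coef} then closes the step with the nuisance bound at piece $d+1-s$. The specialization (\ref{ineq:coef_est_prelim}) follows by setting $j=i+1$, at which the product $\prod_k n_{d+3-k;j}^2$ in the general bound becomes empty.

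The principal obstacle is the combinatorial bookkeeping: one must verify that the accumulated powers of knot-gap factors produced by iterating (i) and (ii) match exactly the exponents $2\{(d-i)\wedge(\ell-1)\}$ of $n_{i+2;j}$ and $2(\ell-(d-j+2))_+$ of $n_{j+1;j}$, together with the precise product range $k\in[d-i+2,(d-j+2)\wedge\ell]$ in the claim. Tracking these factors amounts to expanding the recursion (\ref{eq:beta_right}) through enough inductive steps so that the worst-case contribution at each coefficient index lines up with the claim, and condition (\ref{cond:end_long}) is invoked repeatedly to absorb subdominant intermediate end-piece contributions into the dominant factor $n-n_{k_0-1}$ carried through from Lemma \ref{lemma:quad_forms}.
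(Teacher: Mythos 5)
Your right-to-left induction on $s$ cannot close, and the obstruction is a rank deficiency that the proposal does not address. At step $s$ you wish to bound all $d+1$ coefficients $\{a^{d+1-s}_\ell\}_{\ell=1}^{d+1}$, but the only constraints available from the right side are: (i) the continuity relations applied to the already-bounded $\{a^{d+2-s}_p\}_{p\leq d}$, and (ii) the quadratic forms from Lemma~\ref{lemma:quad_forms} at step $s$. These are \emph{not} independent: the quadratic forms at step $s$ are produced from those at step $s-1$ exactly by substituting the continuity relations and cancelling, so (ii) is a linear consequence of (i) together with what was already known at step $s-1$. Concretely, for $d_0 = d-1$ the first summand of (\ref{eq:quad_forms_right}) loses one quadratic form each time $s$ increases, so at step $s$ the right-side information furnishes only $d+1-s$ independent linear constraints on the $d+1$ coefficients of piece $d+1-s$; for $s\geq 1$ you are short by $s$. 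You can check this directly for $d=2$, $s=1$: the two continuity relations $a^3_1 = a^2_1+n_{3;2}a^2_2+n_{3;2}^2a^2_3$, $a^3_2 = a^2_2+2n_{3;2}a^2_3$, together with the two quadratic forms in (\ref{eq:quad_forms_right}) at $s=1$, have a coefficient matrix of rank~$2$ in the three unknowns $(a^2_1,a^2_2,a^2_3)$, so no amount of rearranging isolates them. Moreover, Lemma~\ref{lemma:last_coef} cannot ``close the step'' as you suggest, since its hypothesis already requires bounds on the shared coefficients $\{a^{d+1-s}_\ell\}_{\ell\leq d_0+1}$ at that piece, which is precisely what is missing.

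This is exactly why the paper's proof runs the induction in the opposite direction, on the location index $j$ from $1$ upward. The base case $j=1$ obtains bounds on the shared coefficients of piece $1$ from the $\ell_2$ constraint on the \emph{leftmost} piece $(n_0;n_1]$ via Lemma~\ref{lemma:diagonal} in the shifted parametrization (\ref{eq:para_p1}) --- this is the one extra degree of freedom per step that the right side cannot supply. In the inductive step the paper then has $d$ inductively-controlled shared coefficients at piece $j$, and needs only \emph{one} new constraint, which the single relevant quadratic form from Lemma~\ref{lemma:quad_forms} at $s = d-j$ supplies; Lemma~\ref{lemma:wedge} then extracts the nuisance coefficient $a^j_{d+1}$, after which the triangle inequality and Lemma~\ref{lemma:para_coef} propagate to piece $j+1$. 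If you wish to salvage your structure, the induction must carry the left-boundary estimate from the start rather than hope to recover it from the cancelled right-side forms.
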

The proof of the above lemma is presented in the next subsection. We emphasize that the condition (\ref{cond:end_long}) is made only for presentational simplicity, as we explain below. If it does not hold, we can adopt the following partition of the pieces $\{(n_0;n_1],\ldots,(n_{d+1};n_{d+2}]\}$ via general length constraints. Fix a target piece $(n_{i+1};n_{i+2}]$ with $i\in[0;d-1]$. 
\begin{itemize}
\item[S1.] First locate among all pieces the longest one denoted as $(n_{i^*_1}; n_{i^*_1 + 1}]$ with $i_1^*\in[0;d+1]$. If this is the target piece, then we can directly apply Lemma \ref{lemma:diagonal} in Appendix \ref{sec:auxiliary} to this piece to obtain the desired estimates in \eqref{ineq:coef_est_prelim}.
\item[S2.] If not, assume without loss of generality that the target piece is to the left of this longest piece, i.e., $i+1<i^*_1$. Then, we can locate the longest piece among $\{(n_0;n_1],\ldots,(n_{i_1^*-1};n_{i_1^*}]\}$, which we denote as $(n_{i_2^*};n_{i_2^*+1}]$ with $i_2^*\in[0;i_1^*-1]$. If the target piece is among $\{(n_{i_2^*};n_{i_2^*+1}], \ldots, (n_{i_1^*-1};n_{i_1^*}]\}$, we can then make the following two modifications of Lemmas \ref{lemma:quad_forms} and \ref{lemma:coef_est_1}: (i) choose location $n_{i^*_1}$ (instead of the current $n_{d+1}$) as the starting point for the cancellation of the quadratic forms; (ii) choose location $i_2^*+1$ (instead of the current location $1$) as the starting point for the induction in Lemma \ref{lemma:coef_est_1}. These two modifications will yield the desired estimates for $\{a^{i+1}_\ell\}$ in \eqref{ineq:coef_est_prelim}. 
\item[S3.] If this is not the case, i.e., $(n_{i+1};n_{i+2}]\in\{(n_1;n_2], \ldots, (n_{i_2^*-1};n_{i_2^*}]\}$, we can then iterate S2 with $i_2^*$ in place of $i_1^*$. This partitioning will terminate in a finite number of steps. 
\end{itemize}
Condition \eqref{cond:end_long} (with $n_{1;0}\leq n_{d+2;d+1}$), along with the current versions of Lemmas \ref{lemma:quad_forms} and \ref{lemma:coef_est_1}, correspond to the above partitioning scheme with an early stop at S2 with $i_1^* = d+1$ and $i_2^* = 0$. On the other hand, condition \eqref{cond:end_long} represents the most difficult case in the sense that the maximal gap  $i_1^* - i_2^* = d+1$ activates the condition $k\leq k_0 =d+2$ as seen in \eqref{ineq:proof_boundary} in the proof ahead.

\subsection{Main proof}\label{subsec:noshape_main}
The main step in the proof of Proposition \ref{prop:width} is the set of coefficient estimates in Lemma \ref{lemma:coef_est_1}, with its more general version stated in Appendix \ref{subsec:coef_est_1}. We present the proof of this lemma in the special case $d_0 = d-1$; the proof for the general case is completely analogous. 

\begin{proof}[Proof of Lemma \ref{lemma:coef_est_1}]
Let
\begin{align*}
Q_{j}^2(\ell) = n_{i+2;j}^{2 \{(d-i) \wedge (\ell-1)\} } \cdot  \bigg(\prod_{k=d-i+2}^{(d-j+2)\wedge \ell }n^2_{d+3-k; j}\bigg) \cdot n_{j+1;j}^{2(\ell-(d-j+2))_+}.
\end{align*}
For the rest of the proof, empty $\prod$ is to be understood as $1$ and empty $\bigvee$ is to be understood as $0$. We will prove (a slightly stronger version with $M(n_1,n_0)$ instead of $M(n_{j+1},n_j)$)
\begin{align}\label{eq:stronger}
1 &\gtrsim \max_{1\leq \ell\leq d} \big\{ (a_\ell^j)^2 Q_{j}^2(\ell)\big\} \cdot M^2(n_1,n_0)
\end{align}
 by induction on $j\in [1;i+1]$. The baseline case $j = 1$ clearly holds by the condition \eqref{cond:end_long} and application of Lemma \ref{lemma:diagonal} to the piece $(n_0;n_1]$. Now, suppose the induction holds up to some location $j \in [1;i]$, and we will prove the iteration at location $j+1$.

 \noindent \textbf{(Part I).} We deal with $\{a^{j+1}_\ell\}_{\ell=1}^{d-j+1}$ in this part. For this, we first obtain estimates for $a^j_{d+1}$ and then use triangle inequality. Applying Lemma \ref{lemma:quad_forms} with $d_0 = d-1$ and $s = d-j$, the $j$-th term in the first summand therein yields that
\begin{align*}
1\gtrsim &\frac{(n-n_{d+1})^{2j-1}}{n^{2(j-1)}} \bigg(\sum_{\ell=0}^{d-j}  \overline{\beta}_{j,\ell}^{d-j} a_{j+\ell}^{j+1} \bigg)^2 \\
&= \frac{(n-n_{d+1})^{2j-1}}{n^{2(j-1)}} \bigg(\sum_{\ell=0}^{d-j}  \overline{\beta}_{j,\ell}^{d-j} \sum_{k= \ell}^{d-j+1} \binom{k+j-1}{\ell+j-1} n_{j+1;j}^{k-\ell}a_{k+j}^{j} \bigg)^2\\
&\equiv \frac{(n-n_{d+1})^{2j-1}}{n^{2(j-1)}} \bigg(\sum_{k=0}^{d-j+1} \bar{\gamma}_{j,k}^{d-j+1} a_{k+j}^{j} \bigg)^2,
\end{align*}
where we used Lemma \ref{lemma:para_coef} and $
\bar{\gamma}_{j,k}^{d-j+1} \equiv \sum_{q=0}^{(d-j)\wedge k}  \overline{\beta}_{j,q}^{d-j}  \binom{k+j-1}{q+j-1}n_{j+1;j}^{k-q}$. Note that for a generic number of $k$ pieces, when $j = 1$, we need to take $d_0 = d-1$ and $s = (k-1)-(j+1) = k - 3$ in Lemma \ref{lemma:quad_forms}, in which case the first summand is non-void if and only if 
\begin{align}\label{ineq:proof_boundary}
d - s  = d - k + 3 \geq 1 \iff k\leq d+2. 
\end{align}
This explains the transition boundary $k_0 = d+2$ as in \eqref{eq:boundary}. 

Combining the above estimate with the estimates for $\{a^j_k\}_{k=j}^d$ from the induction assumption, and using Lemma \ref{lemma:wedge} to cancel everything but $a_{d+1}^j$, we have
\begin{align*}
1&\gtrsim \frac{(n-n_{d+1})^{2j-1}}{n^{2(j-1)}} \bigg(\sum_{k=0}^{d-j+1} \bar{\gamma}_{j,k}^{d-j+1} a_{k+j}^{j} \bigg)^2 + \sum_{k=j}^d \bigg[(a^j_k)^2\cdot Q_j^2(k) \cdot M^2(n_1,n_0) \bigg]\\
&\gtrsim \big(a_{d+1}^j\big)^2 \bigg\{\frac{(n-n_{d+1})^{2j-1} (\bar{\gamma}_{j,d-j+1}^{d-j+1})^2}{n^{2(j-1)}}\wedge \bigwedge_{k=j}^d  \bigg[ Q_j^2(k) M^2(n_1,n_0)\frac{(\bar{\gamma}_{j,d-j+1}^{d-j+1})^2}{(\bar{\gamma}_{j,k-j}^{d-j+1})^2} \bigg]\bigg\}\\
&\equiv \big(a_{d+1}^j\big)^2\bigg\{A_j\wedge \bigwedge_{k=j}^d B_{j,k}\bigg\}.
\end{align*}
As $A_j/(\bar{\gamma}_{j,d-j+1}^{d-j+1})^2 = n_{d+2;d+1}^{2(j-1)}(n-n_{d+1}) \gtrsim B_{j,j}/(\bar{\gamma}_{j,d-j+1}^{d-j+1})^2$ by the assumption that the two end pieces are longer than any middle pieces, we only need to bound from below $\wedge_{k=j}^d B_{j,k}$. By definition of $\bar{\gamma}_{\cdot,\cdot}^\cdot$ and non-negativity of $\overline{\beta}^\cdot_{\cdot,\cdot}$, for any $j\leq k \leq d$,
\begin{align*}
\frac{(\bar{\gamma}_{j,d-j+1}^{d-j+1})^2}{(\bar{\gamma}_{j,k-j}^{d-j+1})^2}&\asymp \frac{ \big(\sum_{q=0}^{d-j}  \overline{\beta}_{j,q}^{d-j} n_{j+1;j}^{(d-j+1)-q}\big)^2 }{ \big(\sum_{q=0}^{k-j}  \overline{\beta}_{j,q}^{d-j} n_{j+1;j}^{k-j-q}\big)^2 }\quad (\textrm{by definition})\\
&\asymp  \bigvee_{p=0}^{d-k}\frac{ \bigvee_{q=0}^{k-j}  (\overline{\beta}_{j,p+q}^{d-j})^2 n_{j+1;j}^{2\{(d-j+1)-(p+q)\}} }{ \bigvee_{q=0}^{k-j}  (\overline{\beta}_{j,q}^{d-j})^2 n_{j+1;j}^{2(k-j-q)} }\quad (\textrm{by rearranging the numerator}) \\
&\geq \bigvee_{p=0}^{d-k} \bigg\{n_{j+1;j}^{2(d-k+1-p)}\bigwedge_{q=0}^{k-j} \bigg(\frac{\overline{\beta}_{j,p+q}^{d-j}}{\overline{\beta}_{j,q}^{d-j}}\bigg)^2 \bigg\} \quad (\textrm{by Lemma \ref{lemma:wedge}})\\
&\gtrsim  \bigvee_{p=0}^{d-k} \bigg\{n_{j+1;j}^{2(d-k+1-p)}\bigwedge_{q=0}^{k-j} \prod_{r=1+q}^{p+q}n_{d+2-r;j+1}^2 \bigg\} \quad (\textrm{by Lemma \ref{lemma:property_beta}})\\
& = \bigvee_{p=0}^{d-k} \bigg\{n_{j+1;j}^{2(d-k+1-p)} \prod_{r=1+k-j}^{p+k-j}n_{d+2-r;j+1}^2 \bigg\} \quad (\textrm{minimum at }q=k-j).
\end{align*}
Hence
\begin{align*}
1\gtrsim (a_{d+1}^j)^2 \bigg[\bigwedge_{k=j}^d Q_j^2(k) \bigvee_{p=0}^{d-k} \bigg\{n_{j+1;j}^{2(d-k+1-p)} \prod_{r=1+k-j}^{p+k-j}n_{d+2-r;j+1}^2 \bigg\}\bigg] M^2(n_1,n_0).
\end{align*}
This implies that for $1\leq \ell \leq d$, by taking $p=(\ell-k)_+$ above and Lemma \ref{lemma:para_coef}, 
\begin{align*}
& M(n_1,n_0) |a^{j+1}_\ell|\lesssim M(n_1,n_0)\bigg[\sum_{k = \ell}^d n_{j+1;j}^{k-\ell}|a^j_k| + n_{j+1;j}^{d+1-\ell}|a^j_{d+1}|\bigg]\\
& \lesssim \sum_{k=\ell}^d n_{j+1;j}^{k-\ell} Q_j^{-1}(k) + \bigvee_{k=j}^d \bigg\{Q_j^{-1}(k)  n_{j+1;j}^{k-\ell+(\ell-k)_+} \prod_{r=1}^{(\ell-k)_+ }n_{d+2+j-k-r;j+1}^{-1} \bigg\}\\
& = \sum_{k=\ell}^d n_{j+1;j}^{k-\ell} Q_j^{-1}(k) + \bigvee_{j\leq k< \ell\vee j} \bigg\{Q_j^{-1}(k)  \prod_{r=1}^{\ell\vee j-k }n_{d+2+j-k-r;j+1}^{-1} \bigg\}+ \bigvee_{\ell\vee j\leq k\leq d} \bigg\{Q_j^{-1}(k) n^{k-\ell}_{j+1;j} \bigg\}.
\end{align*}
Using that $k\mapsto n_{j+1;j}^{k-\ell } Q_j^{-1}(k)$ is non-increasing, the first and third terms in the above display are on the same order as $Q_j^{-1}(\ell) +  Q_j^{-1}(\ell \vee j) n_{j+1;j}^{\ell \vee j -\ell} \asymp Q_j^{-1}(\ell)$. Hence we only need to verify for all $1\leq \ell\leq d-j+1$, $1\leq j\leq i$,
\begin{align}\label{ineq:general_i_1}
\mathfrak{Q}_{j,1}(\ell)+ \mathfrak{Q}_{j,2}(\ell) \equiv  Q_j^{-1}(\ell) + \bigvee_{j\leq k< \ell\vee j} \bigg\{Q_j^{-1}(k)  \prod_{r=1}^{\ell\vee j-k }n_{d+2+j-k-r;j+1}^{-1} \bigg\}  \lesssim Q_{j+1}^{-1}(\ell).
\end{align}
\noindent \textbf{(Case 1).} If $1\leq \ell\leq d-i+1$, $Q_j^{-1}(\ell)= n_{i+2;j}^{-(\ell-1)}$ and $Q_j^{-1}(\ell)= n_{i+2;j+1}^{-(\ell-1)}$, so:
\begin{itemize}
    \item (first term) $\mathfrak{Q}_{j,1}(\ell) = n_{i+2;j}^{-(\ell-1)} \leq n_{i+2;j+1}^{-(\ell-1)} = Q_{j+1}^{-1}(\ell)$.
    \item (second term) without loss of generality we assume $\ell>j$ (otherwise this term does not exist):
    \begin{align*}
    \mathfrak{Q}_{j,2}(\ell)&= \bigvee_{j\leq k< \ell} \bigg\{n_{i+2;j}^{-(k-1)} \prod_{r=1}^{\ell-k }n_{d+2+j-k-r;j+1}^{-1} \bigg\}\\
    &\leq \bigvee_{j\leq k< \ell} \bigg\{n_{i+2;j}^{-(k-1)} n_{d+2+j-\ell;j+1}^{-(\ell-k)} \bigg\}\\
    &\leq \bigvee_{j\leq k< \ell} \bigg\{n_{i+2;j+1}^{-(k-1)} n_{i+2;j+1}^{-(\ell-k)} \bigg\} = n_{i+2;j+1}^{-(\ell-1)}= Q_{j+1}^{-1}(\ell),
    \end{align*}
    where the first equality follows since $k<\ell\leq d-i+1$ so that $Q^{-1}_{j}(k)= n_{i+2;j}^{-(k-1)}$, and the second inequality follows by noting that $\ell\leq d-i+1$ implies $d+2+j-\ell \geq i+2$. 
\end{itemize}
\noindent \textbf{(Case 2).} If $d-i+2\leq \ell\leq d-j+1$, $Q_j^{-1}(\ell)= n_{i+2;j}^{-(d-i)}\prod_{s =d-i+2}^\ell n_{d+3-s;j}^{-1}$ and $Q_{j+1}^{-1}(\ell)= n_{i+2;j+1}^{-(d-i)}\prod_{s =d-i+2}^\ell n_{d+3-s;j+1}^{-1}$, so:
\begin{itemize}
    \item (first term) similarly as above, 
    \begin{align*}
    \mathfrak{Q}_{j,1}(\ell) &= n_{i+2;j}^{-(d-i)}\prod_{s =d-i+2}^\ell n_{d+3-s;j}^{-1} \leq n_{i+2;j+1}^{-(d-i)}\prod_{s =d-i+2}^\ell n_{d+3-s;j+1}^{-1} = Q_{j+1}^{-1}(\ell).
    \end{align*}
    \item (second term) similarly as above we assume $\ell>j$, then
    \begin{align*}
    \mathfrak{Q}_{j,2}(\ell)&= \bigvee_{j\leq k<\ell} \bigg\{ n_{i+2;j}^{-(d-i)}\prod_{s =d-i+2}^k n_{d+3-s;j}^{-1}  \prod_{r=1}^{\ell-k }n_{d+2+j-k-r;j+1}^{-1}  \bigg\}\\
    &\leq n_{i+2;j+1}^{-(d-i)}  \bigvee_{j\leq k<\ell} \bigg\{\prod_{u=d+3-k}^{i+1} n^{-1}_{u;j+1} \prod_{u=d+2+j-\ell}^{d+1+j-k} n^{-1}_{u;j+1} \bigg\}.
    \end{align*}
    Note that $\prod_{u=d+2+j-\ell}^{d+1+j-k} n^{-1}_{u;j+1} \leq \prod_{u=d+2+j-\ell-(j-1)}^{d+1+j-k-(j-1)} n^{-1}_{u;j+1} = \prod_{u = d+3-\ell}^{d+2-k}n^{-1}_{u;j+1}$, where the inequality follows by $j\geq 1$ and $\ell \leq d-j+1$, so the above display can be further bounded by 
    \begin{align*}
    \mathfrak{Q}_{j,2}(\ell)\leq n_{i+2;j+1}^{-(d-i)} \prod_{u = d+3-\ell}^{i+1} n_{u;j+1}^{-1} = Q_{j+1}^{-1}(\ell).
    \end{align*}
\end{itemize}
Hence (\ref{ineq:general_i_1}) is verified and we have finished the proof for Part I.

\noindent \textbf{(Part II).} We deal with $\{a^{j+1}_\ell\}_{\ell= d-j+2}^d$ in this step. Applying Lemma \ref{lemma:quad_forms} with $d_0 = d-1$ and $s = d-j$, the last $(j-1)$ terms in the first summand therein take the form
\begin{align*}
1&\gtrsim \frac{(n-n_{d+1})^{3}}{n^2}\Big(\overline{\beta}^{d-j}_{2,0}a^{j+1}_2  + \ldots + \overline{\beta}^{d-j}_{2,d-j}a^{j+1}_{d-j+2}\Big)^2 \tag{R.$2$}\\
&+ \frac{(n-n_{d+1})^5}{n^4}\Big(\overline{\beta}^{d-j}_{3,0}a^{j+1}_3 +  \ldots + \overline{\beta}^{d-j}_{3,d-j}a^{j+1}_{d-j+3}\Big)^2\tag{R.$3$}\\
& \ldots\\
&+\frac{(n-n_{d+1})^{2j-1}}{n^{2(j-1)}}\Big(\overline{\beta}^{d-j}_{j,0}a^{j+1}_j + \ldots + \overline{\beta}^{d-j}_{j,d-j}a^{j+1}_d\Big)^2\tag{R.$j$}.
\end{align*}
Combining (R.$2$) with the estimates for $\{a^{j+1}_{\ell}\}_{\ell = 2}^{d-j+1}$ obtained in Part I, and using Lemma \ref{lemma:two_quad} iteratively to cancel everything but $a^{j+1}_{d-j+2}$, we obtain
\begin{align*}
1 &\gtrsim \frac{(n-n_{d+1})^3}{n^{2}}\bigg(\sum_{k=0}^{d-j}\overline{\beta}^{d-j}_{2,k}a^{j+1}_{k+2}\bigg)^2 + \sum_{k =2}^{d-j+1} \bigg[(a_k^{j+1})^2\cdot Q_{j+1}^2(k)\cdot M^2(n_1,n_0)\bigg]\\
&\gtrsim \big(a_{d-j+2}^{j+1}\big)^2 \bigg\{ \frac{(n-n_{d+1})^3 (\bar{\beta}_{2,d-j}^{d-j})^2}{n^{2}} \wedge \bigwedge_{k=2}^{d-j+1}\bigg[Q_{j+1}^2(k)M^2(n_1,n_0)\frac{(\bar{\beta}_{2,d-j}^{d-j})^2}{(\bar{\beta}_{2,k-2}^{d-j})^2 }\bigg] \bigg\}\\
&\equiv \big(a_{d-j+2}^{j+1}\big)^2\bigg\{A_j^{(2)}\wedge \bigwedge_{k=2}^{d-j+1} B_{j,k}^{(2)}\bigg\}.
\end{align*}
Similar to Part I, we only need to get a lower bound for $\bigwedge_{k=2}^{d-j+1} B_{j,k}^{(2)}$. As 
$(\bar{\beta}_{2,d-j}^{d-j})^2/(\bar{\beta}_{2,k-2}^{d-j})^2\gtrsim \prod_{r=k-1}^{d-j}n_{d+2-r;j+1}^2$ by Lemma \ref{lemma:property_beta}, it follows that 
\begin{align*}
1&\gtrsim \big(a_{d-j+2}^{j+1}\big)^2 \bigwedge_{k=2}^{d-j+1}\bigg[Q_{j+1}^2(k)\prod_{r=k-1}^{d-j}n_{d+2-r;j+1}^2 \bigg] M^2(n_1,n_0).
\end{align*}
As $k\mapsto Q_{j+1}^2(k)\prod_{r=k-1}^{d-j}n_{d+2-r;j+1}^2 =Q_{j+1}^2(d-j+1) n_{d+3-k;j+1}^2$ is non-increasing on $k \in [2;d-j+1]$, the minimum is taken at $k=d-j+1$ in the above display. Since $Q_{j+1}^2(d-j+1)n_{j+2;j+1}^2 = Q_{j+1}^2(d-j+2)$, we arrive at
\begin{align*}
1&\gtrsim \big(a_{d-j+2}^{j+1}\big)^2 Q_{j+1}^2(d-j+2) M^2(n_1,n_0),
\end{align*}
which is the desired estimate for $a_{d-j+2}^{j+1}$. Now iterate along (R.$3$)-(R.$j$) to complete the proof for Part II. This completes the proof.
\end{proof}

\begin{proof}[Proof of Proposition \ref{prop:width}]
We shorthand $\Theta(d,d_0,k_0)$ as $\Theta$, and the sample points will be indexed using $\iota$. For any $\theta\in\Theta$, let $\{n_j\}_{j=0}^{k_0}$ be its knots: $0 = n_0\leq n_1\leq\ldots\leq n_{k_0} = n$. The overall complexity width can then be bounded piece by piece:
\begin{align*}
\E\sup_{\theta\in\Theta}\parr*{\varepsilon\cdot \theta}^2 &= \E\sup_{\theta\in\Theta}\bigg(\sum_{i=1}^{k_0}\parr*{\varepsilon\cdot \theta}_{(n_{i-1};n_i]}\bigg)^2 \leq C\sum_{i=1}^{k_0}\E\sup_{\theta\in\Theta}\parr*{\varepsilon\cdot \theta}_{(n_{i-1};n_i]}^2.
\end{align*}
We will prove that each summand in the above display can be bounded by a constant multiple of $\log\log (16n)$.

We start with the first piece $(n_0;n_1]$. Let $f\in\cal{F}_n(d,d_0,k)$ be a generating spline of $\theta$, i.e., $\theta_\iota = f(\iota/n)$ for $\iota\in[1;n]$. For this piece, we use the following parametrization of $f(\cdot)$ slightly different from \eqref{eq:spline_para}: for any $x\in(0,n_1/n]$,
\begin{align}\label{eq:para_p1}
f(x)= \sum_{\ell=1}^{d_0+1}\td{a}^1_\ell\bigg(x-\frac{n_1}{n}\bigg)^{\ell-1} + \sum_{\ell=d_0+2}^{d+1}a^0_{\ell}\bigg(x-\frac{n_1}{n}\bigg)^{\ell-1}.
\end{align}
Then, the complexity width in question can be written as
\begin{align*}
(\varepsilon\cdot \theta)_{(n_0;n_1]}=\sum_{\ell=1}^{d_0+1}\sum_{\iota\in(n_0;n_1]}\td{a}^1_\ell\bigg(\frac{\iota-n_1}{n}\bigg)^{\ell-1}\varepsilon_{\iota} + \sum_{\ell=d_0+2}^{d+1}\sum_{\iota\in(n_0;n_1]}a^0_{\ell}\bigg(\frac{\iota-n_1}{n}\bigg)^{\ell-1}\varepsilon_{\iota}.
\end{align*}
Applying Lemma \ref{lemma:diagonal} to the piece $(n_0;n_1]$, we have
\begin{align}\label{eq:estimate_p1}
\sum_{\ell=1}^{d_0+1}(\td{a}^1_\ell)^2\frac{n_1^{2\ell-1}}{n^{2(\ell-1)}} + \sum_{\ell=d_0+2}^{d+1}(a^0_\ell)^2\frac{n_1^{2\ell-1}}{n^{2(\ell-1)}}\lesssim 1.
\end{align}
Thus the complexity width over the first piece $(n_0;n_1]$ can be bounded by
\begin{align*}
&\E\sup_{\theta\in\Theta}\parr*{\varepsilon\cdot \theta}^2_{(n_0;n_1]}\\
&\lesssim \sum_{\ell=1}^{d_0+1} \E\sup_{1\leq n_1\leq n}\sup_{(\td{a}^1_\ell)^2\frac{n_1^{2\ell-1}}{n^{2(\ell-1)}} \leq 1}\frac{(\td{a}^1_\ell)^2}{n^{2(\ell-1)}}\bigg(\sum_{\iota\in(n_0;n_1]}(\iota-n_1)^{\ell-1}\varepsilon_{\iota}\bigg)^2 \\
&+\sum_{\ell=d_0+2}^{d+1}\E\sup_{1\leq n_1\leq n}\sup_{(a^0_\ell)^2\frac{n_1^{2\ell-1}}{n^{2(\ell-1)}} \leq 1}\frac{(a^0_\ell)^2}{n^{2(\ell-1)}}\bigg(\sum_{\iota\in(n_0;n_1]}(\iota-n_1)^{\ell-1}\varepsilon_{\iota}\bigg)^2 \\
&\leq C\log\log (16n),
\end{align*}
where the second inequality is due to Theorem \ref{thm:lil} with $\psi(x) = x^2$ therein. The complexity width over the last piece $(n_{k_0-1};n_{k_0}]$ can be handled similarly.

Starting from the second until the second last piece, we use the parametrization \eqref{eq:spline_para} on the piece $(n_{i+1};n_{i+2}]$, yielding
\begin{align*}
(\varepsilon\cdot \theta)_{(n_{i+1};n_{i+2}]}=\sum_{\ell=1}^{d+1}\sum_{\iota\in(n_{i+1};n_{i+2}]} a^{i+1}_\ell\bigg(\frac{\iota-n_{i+1}}{n}\bigg)^{\ell-1}\varepsilon_{\iota}.
\end{align*}
Thus the complexity width in question can be bounded by
\begin{align*}
&\E\sup_{\theta\in\Theta}\parr*{\varepsilon\cdot \theta}^2_{(n_{i+1};n_{i+2}]} \lesssim \sum_{\ell=1}^{d+1} \E\sup_{\theta\in\Theta} \frac{(a_\ell^{i+1})^2}{n^{2(\ell-1)}}\bigg(\sum_{\iota\in(n_{i+1};n_{i+2}]}(\iota-n_{i+1})^{\ell-1}\varepsilon_{\iota}\bigg)^2\\
&\lesssim \sum_{\ell=1}^{d+1}\E\sup_{ \substack{n_{i+1}<n_{i+2},\\ (a^{i+1}_\ell)^2n_{i+2;i+1}^{2(\ell-1)}M^2(n_{i+2},n_{i+1}) \leq 1 } }\frac{\parr*{a_\ell^{i+1}}^2}{n^{2(\ell-1)}}\bigg(\sum_{\iota\in(n_{i+1};n_{i+2}]}(\iota-n_{i+1})^{\ell-1}\varepsilon_{\iota}\bigg)^2\\
&\leq C\log\log (16n),
\end{align*}
where the second inequality is by plugging in the estimates $a^{i+1}_{\ell}$, $\ell\in[1;d+1]$ from Lemma \ref{lemma:coef_est_1-general} (the general version of Lemma \ref{lemma:coef_est_1} with general $d_0\in[-1;d-1]$), and the third inequality is by applying Theorem \ref{thm:lil} with $\psi(x) = x^2$ therein. The proof is thus complete.
\end{proof}

\section{Proof of Theorem \ref{thm:shape_upper}, upper bound}\label{sec:shape_upper}

\subsection{Proof outline}

For expository purpose, we focus on the convex linear case $\Theta^*(1,k)$ with truth $\theta_0 = 0$ in \eqref{eq:model}. Using the reduction Proposition \ref{prop:reduction}, the key ingredient is to show
\begin{align}\label{eq:cvx_linear_complexity}
\E\sup_{\theta^*\in\Theta^*(1,k):\pnorm{\theta^*}{}\leq 1}\big(\varepsilon \cdot \theta^*\big)^2\leq C \log\log (16n).
\end{align}
To control the complexity width, we may parametrize any $\theta^\ast \in \Theta^\ast(1,k)$ by 
\begin{align}\label{eq:linear_canonical}
\theta^*_i = c_0 + \sum_{j=1}^{j^*}a_j\bigg(\frac{n_j - i}{n}\bigg)_+ + \sum_{j = j^*}^{k-1}b_j\bigg(\frac{i-n_j}{n}\bigg)_+,
\end{align}
where
\begin{itemize}
    \item $j^*$ is the index of the knot where the slope of the underlying convex function $f^*$ crosses zero if it does, and is otherwise set to be $k$; 
    \item $\{a_j\}$ and $\{b_j\}$ are two \emph{non-negative} real sequences parametrizing the \emph{change of slope}, in the two regions where $f^\ast$ has negative and positive slopes, respectively.
\end{itemize}

With the parametrization (\ref{eq:linear_canonical}), proving (\ref{eq:cvx_linear_complexity}) then reduces to obtaining sharp estimates for $\{a_j\}, \{b_j\}$, and $c_0$. These estimates are obtained in rather different ways:
\begin{itemize}
    \item For the coefficients $\{a_j\},\{b_j\}$, the non-negativity property turns out to be the key in obtaining sharp estimates for their magnitudes. Combined with the LIL (cf. Theorem \ref{thm:lil}), these coefficients contribute the desired $\log\log (16n)$ factor to the complexity width (\ref{eq:cvx_linear_complexity}).
    \item For the coefficient $c_0$, an \emph{a priori} estimate $|c_0|\leq C/\sqrt{n}$ is obtained (cf. Lemma \ref{lemma:shape_coef}) under the assumed (convexity) shape constraint and the $\ell_2$ constraint on the signal. This means that the coefficient $c_0$ only contributes a constant factor to the complexity width (\ref{eq:cvx_linear_complexity}).
\end{itemize}

It should be noted that for the larger class $\Theta(1,0,k)$ without the convexity shape constraint, a parametrization in the form of (\ref{eq:linear_canonical}) still holds but \emph{without the non-negativity constraint on $\{a_j\},\{b_j\}$}. The lack of such sign constraints unfortunately makes this representation not quite useful in obtaining LIL for $\Theta(1,0,3)$, so a different representation (cf. (\ref{eq:linear_para})) and a different proof strategy (cf. Section \ref{sec:intuition}) are adopted for $\Theta(1,0,3)$.

\subsection{Groundwork}\label{subsec:shape_ground}

The first result establishes a canonical parametrization for general-order $d$-monotone splines. By definition, the polynomial coefficient of the highest order for a $d$-monotone spline is increasing and thus crosses zero at most once. In the following parametrization, we choose this cross point as the pivot.
\begin{lemma}\label{lemma:shape_para}
For any $f^*\in\cal{F}_n^*(d,k)$, there exists some integer $j^*\in[0;k]$ and real sequences $\{a_j\}_{j=1}^{j^*}$, $\{b_j\}_{j=j^*}^{k-1}$, and $\{c_\ell\}_{\ell=0}^{d-1}$ such that $a_j(-1)^{d+1} \geq 0$, $b_j\geq 0$, and
\begin{align}\label{eq:can_func}
f^*(x) = \sum_{j=1}^{j^*}a_j\Big(\frac{n_j}{n} - x\Big)_+^{d} + \sum_{j=j^*}^{k-1}b_j\Big(x-\frac{n_j}{n}\Big)_+^d + \sum_{\ell=0}^{d-1}\frac{c_\ell}{\ell!}x^\ell
\end{align} 
for $x\in (0,1]$, where $\{n_j/n\}_{j=0}^{k}$ are the knots of $f^*$. On the sequence level, we have for every $\theta^*\in\Theta^*(d,k)$:
\begin{align}\label{eq:can_seq}
\theta^*_i = \sum_{j=1}^{j^*}a_j\bigg(\frac{n_j - i}{n}\bigg)_+^{d} + \sum_{j=j^*}^{k-1}b_j\bigg(\frac{i-n_j}{n}\bigg)_+^{d} + \sum_{\ell=0}^{d-1}\frac{c_\ell}{\ell!}(i/n)^\ell.
\end{align} 
\end{lemma}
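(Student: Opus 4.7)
The plan is to exploit the characterization of $\mathcal{F}_n^*(d,k)$ as the image of $\mathcal{F}_n^*(0,k)$ under $d$-fold integration. Writing $f_\circ \equiv D^d f^* \in \mathcal{F}_n^*(0,k)$ as a non-decreasing step function $\sum_{i=1}^k v_i \mathbf{1}_{(n_{i-1}/n,\, n_i/n]}$ with $v_1 \leq \ldots \leq v_k$, and recalling the elementary identities $D^d[(n_j/n - x)_+^d] = (-1)^d d!\,\mathbf{1}_{x < n_j/n}$ and $D^d[(x - n_j/n)_+^d] = d!\,\mathbf{1}_{x > n_j/n}$, the sign constraints $a_j(-1)^{d+1} \geq 0$ and $b_j \geq 0$ in (\ref{eq:can_func}) strongly hint that the pivot $j^*$ must be chosen as the location where $f_\circ$ crosses zero.

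Concretely, I would take $j^* \in [0;k]$ to be the unique index satisfying $v_{j^*} \leq 0 \leq v_{j^*+1}$, with the conventions $v_0 = -\infty$ and $v_{k+1} = +\infty$ to absorb the degenerate cases when $f_\circ$ has constant sign (in which case one of the two sums in (\ref{eq:can_func}) is empty). Telescoping around the pivot then dictates the definitions
\begin{align*}
a_{j^*} &\equiv (-1)^d v_{j^*}/d!, & a_j &\equiv (-1)^d (v_j - v_{j+1})/d! \quad \text{for } 1 \leq j < j^*,\\
b_{j^*} &\equiv v_{j^*+1}/d!, & b_j &\equiv (v_{j+1} - v_j)/d! \quad \text{for } j^* < j \leq k-1.
\end{align*}
The monotonicity of $\{v_i\}$ together with the choice of $j^*$ immediately give $a_j(-1)^{d+1} \geq 0$ and $b_j \geq 0$.

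The next step is to verify, piece by piece, that the $d$-th derivative of the right-hand side of (\ref{eq:can_func}) coincides with $f_\circ$. On the interior of a piece $(n_{i-1}/n,\, n_i/n)$, the indicators $\mathbf{1}_{x < n_j/n}$ and $\mathbf{1}_{x > n_j/n}$ respectively select $j \geq i$ and $j \leq i-1$, so at most one of the sums $\sum_{j=i}^{j^*} a_j$ and $\sum_{j=j^*}^{i-1} b_j$ is non-empty depending on whether $i \leq j^*$ or $i > j^*$. Direct telescoping in either case then produces exactly $v_i$. Since $f^*$ and the right-hand side of (\ref{eq:can_func}) share the same $d$-th derivative, they differ by a polynomial of degree at most $d-1$, which can be uniquely written in the form $\sum_{\ell=0}^{d-1} c_\ell x^\ell/\ell!$ for appropriate constants $\{c_\ell\}$. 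The sequence version (\ref{eq:can_seq}) is then immediate by evaluating $f^*$ at $x = i/n$.

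The proof is ultimately a careful bookkeeping exercise building on the standard truncated power basis representation of splines; the main subtlety will be to handle the two boundary pieces $i = j^*$ and $i = j^* + 1$ (where one sum terminates and the other begins), as well as the edge cases $j^* \in \{0, k\}$ in which one of the two sums in (\ref{eq:can_func}) is vacuous. No genuine technical obstacle is expected beyond this.
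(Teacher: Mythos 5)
Your proof is correct and follows essentially the same route as the paper's: identify the pivot $j^*$ at the zero crossing of the step function $f_\circ$, telescope the step heights around it to define the sign-constrained coefficients, and then pass between $f_\circ$ and the representation (\ref{eq:can_func}) via $d$-fold integration/differentiation (the paper integrates $f_\circ$ forward using the closed forms of $\int_0^x(\tau-u)_+^\ell\,du$ and $\int_0^x(u-\tau)_+^\ell\,du$, while you differentiate the candidate $d$ times and verify piecewise; these are equivalent). The only cosmetic caveat is that $j^*$ with $v_{j^*}\leq 0\leq v_{j^*+1}$ need not be literally unique if some $v_j$ vanish, but any valid choice (e.g.\ the paper's $\max\{j:v_j\leq 0\}$) works without changing the argument.
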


The next result generalizes the bound $|c_0|\leq C/\sqrt{n}$ in the previous proof outline, indicating that all lower-order polynomial coefficients of a $d$-monotone spline can be well-controlled.
\begin{lemma}\label{lemma:shape_coef}
For any $\theta^*\in \Theta^*(d,k)$ with $\|\theta^*\|^2\leq 1$, there exists some $C = C(d)$ such that, in its canonical form \eqref{eq:can_seq}, $|c_\ell|\leq C/\sqrt{n}$ for every $\ell\in[0;d-1]$.
\end{lemma}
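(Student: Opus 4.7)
My plan is to reduce the statement to the bound $\|p\|^2 \lesssim_d \|\theta^*\|^2$, where $p(x) := \sum_{\ell=0}^{d-1} c_\ell x^\ell/\ell!$ is the polynomial part in the canonical form of Lemma \ref{lemma:shape_para}. Indeed, since $p$ lies in the $d$-dimensional subspace of polynomials of degree at most $d-1$, all norms on this space are equivalent up to $d$-dependent constants; in particular the standard Marcinkiewicz--Zygmund equivalence on the uniform grid gives $\sum_{i=1}^n p(i/n)^2 \asymp_d n \sum_{\ell=0}^{d-1} c_\ell^2$, so $\|p\|^2 \lesssim_d \|\theta^*\|^2 \leq 1$ would immediately yield $|c_\ell| \leq C(d)/\sqrt{n}$ for each $\ell$.

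The proof of $\|p\|^2 \lesssim_d \|\theta^*\|^2$ would exploit two structural features of the decomposition $\theta^*_i = p(i/n) + r_i$ inherited from Lemma \ref{lemma:shape_para}. First, the sign conditions $a_j(-1)^{d+1} \geq 0$ and $b_j \geq 0$ force $r$ to have a definite sign on each side of the pivot: $r_i \geq 0$ for $i \geq n_{j^*}$, $(-1)^{d+1} r_i \geq 0$ for $i \leq n_{j^*}$, and $r_{n_{j^*}} = 0$. Second, $p$ is exactly the degree-$(d-1)$ Taylor polynomial of $f^*$ at $n_{j^*}/n$: direct differentiation shows that all $\ell$-th derivatives of the kernels $(n_j/n - \cdot)_+^d$ and $(\cdot - n_j/n)_+^d$ for $\ell \leq d-1$ vanish at $x = n_{j^*}/n$, so $(f^*)^{(\ell)}(n_{j^*}/n) = p^{(\ell)}(n_{j^*}/n)$ for every $\ell \in [0, d-1]$. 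Equivalently, $r$ and its first $d-1$ derivatives all vanish at the pivot.

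My strategy for turning these facts into the target inequality is induction on $d$. For the base case $d = 1$, one has $c_0 = \min_i \theta^*_i$ since $r \geq 0$ with $r_{n_{j^*}} = 0$; the convex V-shape of $r$ forces $r$ to grow at least linearly away from $n_{j^*}$, so splitting the grid into the regions $\{r_i \leq |c_0|/2\}$ and $\{r_i > |c_0|/2\}$ and optimizing over the length of the central region gives $\|\theta^*\|^2 \gtrsim n c_0^2$. For $d \geq 2$, the top coefficient admits the clean description $c_{d-1} = (f^*)^{(d-1)}(n_{j^*}/n) = \min_x (f^*)^{(d-1)}(x)$, because $(f^*)^{(d-1)}$ is convex (its derivative $(f^*)^{(d)}$ is non-decreasing) with minimum at $n_{j^*}/n$; applying the $d=1$-style convex argument to $(f^*)^{(d-1)}$ bounds $|c_{d-1}|$. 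The remaining $c_\ell$'s for $\ell < d-1$ follow from the recursive identity $c_\ell = (f^*)^{(\ell)}(n_{j^*}/n) - \sum_{m > \ell} c_m (n_{j^*}/n)^{m-\ell}/(m-\ell)!$, which again expresses $c_\ell$ as the value at the minimizer of a $(d-\ell)$-monotone function plus already-bounded higher-order coefficients.

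The main technical obstacle I anticipate is making the bound on $|(f^*)^{(\ell)}(n_{j^*}/n)|$ uniform in the number of pieces $k$ and in the knot configuration. The crude estimate $\|(f^*)^{(\ell)}\|_2 \lesssim n^\ell \|\theta^*\|_2$ is far too weak to give the desired $|c_\ell| \leq C/\sqrt{n}$, so one cannot simply invoke an $L^2$ bound on the derivative as a black box. Instead, the argument must crucially use that $(f^*)^{(\ell)}$ is being evaluated precisely at its minimizer and that the shape constraint forces $(f^*)^{(\ell)}$ to grow by a definite increment across each of the pieces adjacent to $n_{j^*}$; this allows the $\ell^2$ norm of $\theta^*$ restricted to a constant-sized (in $d$) window around $n_{j^*}$ to control the minimum with only $d$-dependent losses.
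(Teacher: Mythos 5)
Your structural observations are correct and genuinely illuminating: $p$ is indeed the degree-$(d-1)$ Taylor polynomial of $f^*$ at the pivot $\tau^*=n_{j^*}/n$, the residual $r=f^*-p$ vanishes together with its first $d-1$ derivatives at $\tau^*$ with the stated sign structure, and $c_{d-1}=(f^*)^{(d-1)}(\tau^*)=\min_x (f^*)^{(d-1)}(x)$ because $(f^*)^{(d-1)}$ is convex with $(f^*)^{(d)}=f_\circ$ crossing zero at $\tau^*$. This interpretation is not made explicit in the paper and is a nice way to understand the lemma.

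The gap lies in the step that turns these facts into $|c_\ell|\leq C/\sqrt{n}$, and I do not see how your proposed repair closes it. Consider first the ``constant-sized window'' idea: on a piece of length $m$ adjacent to the pivot, Lemma~\ref{lemma:diagonal} applied to the restriction of $\theta^*$ to that piece yields $|(f^*)^{(d-1)}(\tau^*)|\lesssim n^{d-1}/m^{d-1/2}$, which equals $1/\sqrt{n}$ only when $m\asymp n$. Nothing in the class $\Theta^*(d,k)$ prevents the pieces abutting $n_{j^*}$ from having length exactly $d+1$, in which case this estimate is off by a factor of $n^{d-1/2}$. Second, the claim that the shape constraint ``forces $(f^*)^{(\ell)}$ to grow by a definite increment across each piece'' is false: $d$-monotonicity only asks $(f^*)^{(d)}=f_\circ$ to be non-decreasing, and the increments of $f_\circ$ can be arbitrarily small, so the growth of $(f^*)^{(\ell)}$ across a short piece can be negligible. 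Third, the induction on $d$ that would apply the ``$d=1$-style convex argument'' to $(f^*)^{(d-1)}$ needs some control on $\|(f^*)^{(d-1)}\|_{L^2}$, which the constraint $\int_0^1 (f^*)^2\leq 1/n$ does not supply; differentiating a function with small $L^2$ norm can blow up $L^2$ norms of derivatives by powers of $n$ per order. The paper sidesteps all of this by arguing globally rather than locally: after reducing (via Lemma~\ref{lemma:seq2func} and scale-invariance) to showing that a certain infimum over $f_\circ\in\bigcup_n\mathcal{F}_n^*(0,k)$ and polynomial coefficients $\td{c}_\ell$ with $\td{c}_{d-1}\in\{\pm 1\}$ of $\int_{\tau^*}^1[\sum_\ell\td{c}_\ell(x-\tau^*)^\ell/\ell!+(I^d_{0,\ldots,0;\tau^*}f_\circ)(x)]^2\,\d{x}$ is bounded away from zero (this infimum being taken over the full half-interval $[\tau^*,1]$ of length at least $1/2$, not a window of constant width), it establishes this lower bound by a compactness/contradiction argument: the key difficulty is showing that the coefficients $\td{c}_{n,\ell}$ of a hypothetical minimizing sequence are bounded (done via a careful choice of test scales $\epsilon_n,\eta_n$), and the final contradiction uses Rockafellar's theorem on uniform convergence of derivatives of convex functions to clash with $D^{(d-1)}h_n(\tau^*_n)=0$. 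If you want to salvage the Taylor-polynomial viewpoint, you would still need some version of this global compactness step; the local estimates near the pivot simply do not have the right order of magnitude.
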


The proof of the above lemmas can be found in Appendix \ref{sec:proof_shape}.

\subsection{Main proof}\label{subsec:shape_main}

\begin{proof}[Proof of Theorem \ref{thm:shape_upper} (upper bound)]
Throughout the proof, we will shorthand $\Theta^*(d,k)$ as $\Theta^*$. We start with a slight modification of the reduction principle in Proposition \ref{prop:reduction}. 

Let $L_0\equiv n/k$ be an integer without loss of generality. Let $\theta^*_\ora$ be an oracle in $\Theta^*$ that achieves the infimum. Let $n_j \equiv n_j(\theta^*_\ora)$, $0\leq j\leq k$ be the knots of $\theta^*_\ora$: $0 = n_0 \leq n_1\leq \ldots\leq n_k = n$. For each $j\in[0;k-1]$, let $m_j \equiv m_j(\theta^*_\ora) \equiv \ceil{(n_{j+1} - n_j)/L_0}$, $n_{j,p} \equiv n_{j,p}(\theta^*_\ora)\equiv n_j + p\cdot L_0$ for $p\in[0;m_j-1]$ so that $n_{j,0} = n_j$ and $n_{j,m_j}\equiv n_{j,m_j}(\theta^*_\ora) \equiv n_{j+1}$. Lastly, for any $\theta^*\in\Theta^*$, let $s_{j,p}\equiv s_{j,p}(\theta^*, \theta^*_\ora)$ be the number of knots of $\theta^*-\theta^*_\ora$ on the segment $(n_{j,p},n_{j,p+1}]$, so that $\sum_{j=0}^{k-1}\sum_{p=0}^{m_j-1}s_{j,p}\leq k$.
Under the above notation, define, for each $\theta\in\RR^n$, $(\theta)_{[j,p]}$ as the sub-vector $(\theta_i)_{i\in(n_{j,p},n_{j,p+1}]}$. 

Following the same line of proof as Proposition \ref{prop:reduction} on this finer resolution $\{n_{j,p}\}$, we have, for any $\delta > 0$ and then some $C = C(\delta)$,
\begin{align*}
\E_{\theta_0}\|\widehat{\theta} - \theta_0\|^2 \leq (1+\delta)\|\theta^*_\ora - \theta_0\|^2 + C\cdot\E\sup_{\theta^*\in\Theta^*}\sum_{j=0}^{k-1}\sum_{p=0}^{m_j-1}\big(\varepsilon_{[j,p]}\cdot v_{j,p}(\theta^*)\big)^2, 
\end{align*}
where $v_{j,p}(\theta^*)\equiv v_{j,p}(\theta^*;\theta^*_\ora)\equiv (\theta^*-\theta^*_\ora)_{[j,p]}/\|(\theta^*-\theta^*_\ora)_{[j,p]}\|$.

We now prove that the second term on the right side can be bounded by a constant multiple of $k\log\log(16n/k)$. Some extra notation is hence needed. For any $\theta^*\in\Theta^*$, denote the set of $s_{j,p}$ knots of $v_{j,p}(\theta^*)$ as $n_{j,p,1},\ldots,n_{j,p,s_{j,p}}$. Also define $n_{j,p,0}\equiv n_{j,p,0}(\theta^*_\ora) \equiv n_{j,p}$ and $n_{j,p,s_{j,p}+1}\equiv n_{j,p,s_{j,p}+1}(\theta^*_\ora)\equiv n_{j,p+1}$. Moreover, in view of the canonical parametrization of shape-constrained splines in Lemma \ref{lemma:shape_para}, let for each fixed $j\in[0;k-1]$ and $p\in[0;m_j]$ the index $q\equiv q(\theta^*,\theta^*_\ora)\in[0;s_{j,p}]$ be such that, on $(n_{j,p},n_{j,p+1}]$, $(n_{j,p,q^*-1}, n_{j,p,q^*}]$ is the last piece on which the sign of the highest order polynomial component of $\theta^*-\theta^*_\ora$ is negative. 

Under the above notation, we have $v_{j,p}(\theta^*)\in\Theta^*_{n_{j,p+1}-n_{j,p}}(d,s_{j,p}+1)$ (here we assume without loss of generality that the two end pieces of $\theta^*-\theta^*_\ora$ adjacent to $n_{j,p}$ and $n_{j,p+1}$ also have length at least $d+1$ since there are at most $2k$ such pieces and each only contributes a constant factor to the complexity width). Thus Lemma \ref{lemma:shape_para} entails that there exist real sequences $\{c_{j,p,\ell}\}\equiv \{c_{j,p,\ell}(\theta^*,\theta^*_\ora)\}$, and some $q^*\in[1;s_{j,p}]$ along with sequences of equal sign $\{a_{j,p,q}\}_{q=1}^{q^*}\equiv \{a_{j,p,q}(\theta^*,\theta^*_\ora)\}_{q=1}^{q^*}$, $\{b_{j,p,q}\}_{q=q^*}^{s_{j,p}}\equiv \{b_{j,p,q}(\theta^*,\theta^*_\ora)\}_{q=q^*}^{s_{j,p}}$ such that 
\begin{align}\label{eq:shape_formula}
\notag\big(v_{j,p}(\theta^*)\big)_i&= \sum_{q=1}^{q^*}a_{j,p,q}\bigg(\frac{n_{j,p,q}-(i+n_{j,p})}{n}\bigg)^d_+ + \sum_{q=q^*}^{s_{j,p}}b_{j,p,q}\bigg(\frac{(i+n_{j,p})-n_{j,p,q}}{n}\bigg)^d_+  \\
&\quad +\sum_{\ell=0}^{d-1}\frac{c_{j,p,\ell}}{\ell!}\bigg(\frac{i-n_{j,p}}{n}\bigg)^\ell \equiv \big(v^1_{j,p}(\theta^*)\big)_i + \big(v^2_{j,p}(\theta^*)\big)_i,
\end{align}
where $(v^2_{j,p}(\theta^*))_i\equiv \sum_{\ell=0}^{d-1}c_{j,p,\ell}\big((i-n_{j,p})/n\big)^\ell/\ell!$. Therefore, we have
\begin{align*}
&\E\sup_{\theta^*\in\Theta^*}\sum_{j=0}^{k-1}\sum_{p=0}^{m_j-1}\big(\varepsilon_{[j,p]}\cdot v_{j,p}(\theta^*) \big)^2\\
&\leq 2\bigg(\E\sup_{\theta^*\in\Theta^*}\sum_{j=0}^{k-1}\sum_{p=0}^{m_j-1}\big(\varepsilon_{[j,p]}\cdot v_{j,p}^1(\theta^*)\big)^2 + \E\sup_{\theta^*\in\Theta^*}\sum_{j=0}^{k-1}\sum_{p=0}^{m_j-1}\big(\varepsilon_{[j,p]}\cdot v_{j,p}^2(\theta^*)\big)^2\bigg)\\
&\equiv 2\Big((I) + (II)\Big).
\end{align*}
We first upper bound $(II)$. Since for each $j,p$ and $\theta^*\in\Theta^*$, $v_{j,p}(\theta^*)\in \Theta^*_{n_{j,p+1} - n_{j,p}}(d,s_{j,p}+1)$ and has unit norm, Lemma \ref{lemma:shape_coef} entails that there exists some $C = C(d)$ such that $|c_{j,p,\ell}|\leq C/\sqrt{n_{j,p+1} - n_{j,p}}$ for $j\in[0;k-1]$, $p\in[0;m_j]$, and $\ell\in[0;d-1]$. Let $\Delta n_{j,p}\equiv n_{j,p+1} - n_{j,p}$. Then, we have
\begin{align*}
(II) &\leq C\cdot \E\sup_{|c_{j,p,\ell}|\leq C/\sqrt{\Delta n_{j,p}}}\sum_{j=0}^{k-1}\sum_{p=0}^{m_j-1}\sum_{\ell=0}^{d-1} \frac{c_{j,p,\ell}^2}{n^{2\ell}(\ell!)^2}\bigg(\sum_{i\in(n_{j,p};n_{j,p+1}]}(i-n_{j,p})^\ell\varepsilon_i\bigg)^2\\
&\leq C\cdot \sum_{j=0}^{k-1}\sum_{p=0}^{m_j-1}\sum_{\ell=0}^{d-1}(\Delta n_{j,p})^{-1} \frac{\E\big[\sum_{i\in(n_{j,p};n_{j,p+1}]}(i-n_{j,p})^\ell\varepsilon_i\big]^2}{n^{2\ell}}\\
&\leq C\cdot\sum_{j=0}^{k-1}\sum_{p=0}^{m_j-1}1 = C\cdot\sum_{j=0}^{k-1} m_j \leq Ck.
\end{align*}

Next, we bound $(I)$. Some extra notation is needed. Define the following partition of $(n_{j,p};n_{j,p+1}]$ with intervals
\begin{align*}
I^B_{j,p,\ell}\equiv \bigg(n_{j,p} + \ceil{(1-2^{-(\ell-1)})\Delta n_{j,p}};n_{j,p} + \ceil{(1-2^{-\ell})\Delta n_{j,p}}\bigg]
\end{align*}
for $\ell\in[1;t_{j,p}]$ and $t_{j,p}\equiv \ceil{\log_2\Delta n_{j,p}}$, and similarly,
\begin{align*}
I^A_{j,p,\ell}\equiv \bigg(n_{j,p+1} - \ceil{(1-2^{-\ell})\Delta n_{j,p}};n_{j,p+1} - \ceil{(1-2^{-(\ell-1)})\Delta n_{j,p}}\bigg].
\end{align*}
From this definition, we immediately have (with analogous conclusions for $I^A_{j,p,\ell}$): (i) $|I^B_{j,p,\ell}| \leq \ceil{2^{-\ell}\Delta n_{j,p}}$; (ii) $2(\sum_{\ell > \ell_0}|I^B_{j,p,\ell}| + 1)\geq \sum_{\ell\geq \ell_0}|I^B_{j,p,\ell}|$ for any $\ell_0\in[1;t_{j,p}]$. Then, let 
\begin{align*}
B_{j,p,\ell}&\equiv B_{j,p,\ell}(\theta^*,\theta^*_\ora)\equiv \sum_{q=q^*}^{s_{j,p}} b_{j,p,\ell}\bm{1}_{n_{j,p,q}\in I^B_{j,p,\ell}},\\
\delta^B_{j,p,\ell} &\equiv \delta^B_{j,p,\ell}(\theta^*, \theta^*_\ora)\equiv \max\Big\{q^*\leq q\leq s_{j,p}: \bm{1}_{n_{j,p,q}\in I^B_{j,p,\ell}}\Big\}.
\end{align*}
In words, $\delta^B_{j,p,\ell}$ equals to $1$ if and only if among the knots $\{n_{j,p,q}\}_{q=q^*}^{s_{j,p}}$, there is at least one that lies in the interval $I^B_{j,p,\ell}$, and if such is the case, $B_{j,p,\ell}$ returns the block sum. We omit the similar definitions for $A_{j,p,\ell}$ and $\delta^A_{j,p,\ell}$. By definition, we immediately have $\sum_{\ell=1}^{t_{j,p}}\delta^B_{j,p,\ell}\leq s_{j,p}$.

In the parametrization \eqref{eq:shape_formula}, using the constraint $\|v_{j,p}(\theta^*)\|\leq 1$ and the bounds $|c_{j,p,\ell}|\leq C/\sqrt{n_{j,p+1} - n_{j,p}}$ for $\ell\in[0;d-1]$, we have $\pnorm{v_{j,p}^1(\theta^\ast)}{}\leq C$ (recall the definition of $v^1_{j,p}$ in \eqref{eq:shape_formula}) for some $C = C(d)$. Hence for some sufficiently small $c = c(d)$,
\begin{align*}
1 &\geq c\cdot\sum_{i\in(n_{j,p};n_{j,p+1}]}\bigg[\sum_{q=1}^{q^*}a_{j,p,q}\bigg(\frac{n_{j,p,q}-i}{n}\bigg)^d_+ + \sum_{q=q^*}^{s_{j,p}}b_{j,p,q}\bigg(\frac{i-n_{j,p,q}}{n}\bigg)^d_+\bigg]^2\\
&\geq c\cdot\sum_{i\in(n_{j,p};n_{j,p+1}]}\bigg[\sum_{q=1}^{q^*}a_{j,p,q}\bigg(\frac{n_{j,p,q}-i}{n}\bigg)^d_+\bigg]^2\vee \bigg[\sum_{q=q^*}^{s_{j,p}}b_{j,p,q}\bigg(\frac{i-n_{j,p,q}}{n}\bigg)^d_+\bigg]^2,
\end{align*} 
where the second inequality follows from the fact that the interaction term between the two summands in the first inequality is $0$ for each $i$.

Now, starting from the constraint $
1\geq c\cdot \sum_{i\in(n_{j,p};n_{j,p+1}]}\big[\sum_{q=q^*}^{s_{j,p}}b_{j,p,q}\big(\frac{i-n_{j,p,q}}{n}\big)^d_+\big]^2$,
we will obtain estimates for $B_{j,p,\ell}$. Fix $j,p$. By the disjointness of $I^B_{j,p,\ell}$ and the non-negativeness of $\{b_{j,p,q}\}$, we have
\begin{align}\label{eq:B_estimate}
\notag1& \geq c\cdot \sum_{i\in(n_{j,p};n_{j,p+1}]}\bigg[\sum_{\ell=1}^{t_{j,p}}\sum_{q=q^*}^{s_{j,p}}b_{j,p,q}\bm{1}_{n_{j,p,q} \in I_{j,p,\ell}^B}\bigg(\frac{i-n_{j,p,q}}{n}\bigg)^d_+\bigg]^2  \\
\notag &\geq c\cdot \sum_{i\in(n_{j,p};n_{j,p+1}]}\bigg[\sum_{\ell=1}^{t_{j,p}}\sum_{q=q^*}^{s_{j,p}}b_{j,p,q}\bm{1}_{n_{j,p,q} \in I_{j,p,\ell}^B}\bigg(\frac{i-(I^B_{j,p,\ell})_+  }{n}\bigg)^d_+\bigg]^2\\
\notag &= c\cdot \sum_{i\in(n_{j,p};n_{j,p+1}]}\bigg[\sum_{\ell=1}^{t_{j,p}}B_{j,p,\ell}\bigg(\frac{i-(I^B_{j,p,\ell})_+}{n}\bigg)^d_+\bigg]^2\\
\notag &\geq c\cdot \sum_{\ell=1}^{t_{j,p}}B_{j,p,\ell}^2\sum_{i\in(n_{j,p};n_{j,p+1}]}\bigg(\frac{i-(I^B_{j,p,\ell})_+ }{n}\bigg)^{2d}_+\\
&\geq c\cdot \sum_{\ell=1}^{t_{j,p}}B_{j,p,\ell}^2\frac{(n_{j,p+1} - (I^B_{j,p,\ell})_+)^{2d+1}}{n^{2d}} \geq c\cdot \sum_{\ell=1}^{t_{j,p}}B_{j,p,\ell}^2\frac{(n_{j,p+1} - (I^B_{j,p,\ell})_-)^{2d+1}}{n^{2d}}, 
\end{align}
where 
$(I^B_{j,p,\ell})_+$ ($(I^B_{j,p,\ell})_-$) is defined to be the right (left) endpoint of $I^B_{j,p,\ell}$, and the last inequality follows from property (ii) of the partition $I^B_{j,p,\ell}$.

We are now ready to bound the term $(I)$. First by the vanishing of interaction terms, we have $(I) = (I_1) + (I_2)$, where
\begin{align*}
(I_1)&\equiv \E\sup_{\theta^*\in\Theta^*}\sum_{j=0}^{k-1}\sum_{p=0}^{m_j-1}\bigg[\sum_{q=0}^{q^*}a_{j,p,q}\bigg(\sum_{i\in(n_{j,p};n_{j,p,q}]}\bigg(\frac{n_{j,p,q} - i}{n}\bigg)^d_{+}\varepsilon_i\bigg)\bigg]^2,\\
(I_2)&\equiv \E\sup_{\theta^*\in\Theta^*}\sum_{j=0}^{k-1}\sum_{p=0}^{m_j-1}\bigg[\sum_{q=q^*}^{s_{j,p}}b_{j,p,q}\bigg(\sum_{i\in(n_{j,p,q};n_{j,p+1}]} \bigg(\frac{i-n_{j,p,q}}{n}\bigg)^d_{+}\varepsilon_i\bigg)\bigg]^2.
\end{align*}
Due to symmetry, we only bound $(I_2)$ as follows:
\begin{align*}
(I_2)&= \E\sup_{\theta^*}\sum_{j,p}\bigg[\sum_{q=q^*}^{s_{j,p}} \sum_{\ell=1}^{t_{j,p}}\bm{1}_{n_{j,p,q}\in I^B_{j,p,\ell}}b_{j,p,q}\bigg(\sum_{i\in(n_{j,p,q};n_{j,p+1}]}\bigg(\frac{i-n_{j,p,q}}{n}\bigg)^d\varepsilon_i\bigg)\bigg]^2\\
&\leq \E\sup_{\theta^*}\sum_{j,p}\bigg[\sum_{\ell=1}^{t_{j,p}}\bigg\{\sum_{q=q^*}^{s_{j,p}}\bm{1}_{n_{j,p,q}\in I^B_{j,p,\ell}}b_{j,p,q}\bigg\}\max_{\tau\in I^B_{j,p,\ell}}\bigg|\sum_{i\in(\tau;n_{j,p+1}]}\bigg(\frac{i-\tau}{n}\bigg)^d\varepsilon_i\bigg|\bigg]^2\\
&= \E\sup_{\theta^*}\sum_{j,p}\bigg[\sum_{\ell=1}^{t_{j,p}}B_{j,p,\ell}\max_{\tau\in I^B_{j,p,\ell}}\bigg|\sum_{i\in(\tau;n_{j,p+1}]}\bigg(\frac{i-\tau}{n}\bigg)^d\varepsilon_i\bigg|\bigg]^2\\
&\leq \E \max_{ \{\delta_{j,p,\ell}^B\} \in \Delta^B  }\sum_{j=0}^{k-1}\sum_{p=0}^{m_j-1}\sum_{\ell=1}^{t_{j,p}}\delta^B_{j,p,\ell}\max_{\tau\in I^B_{j,p,\ell}}\frac{\Big(\sum_{i\in(\tau;n_{j,p+1}]}(i-\tau)^d\varepsilon_i\Big)^2}{(n_{j,p+1} - (I^B_{j,p,\ell})_-)^{2d+1}}.
\end{align*}
Here, the first inequality follows from the non-negativity of $\{b_{j,p,q}\}$, the second equality follows from the definition of $B_{j,p,\ell}$, and the last inequality follows from Cauchy-Schwarz along with the estimates for $B_{j,p,\ell}$ in \eqref{eq:B_estimate}. Furthermore, we define
\begin{align*}
    \Delta^B\equiv \bigg\{\{\delta^B_{j,p,\ell}\}: \delta^B_{j,p,\ell}\in\{0,1\}, \sum_{j=1}^k\sum_{p=1}^{m_j}\sum_{\ell=1}^{t_{j,p}} \delta^B_{j,p,\ell}\leq k\bigg\}
\end{align*}
to be the admissible set for the sequence $\{\delta^B_{j,p,\ell}\}$. As $\sum_{j=1}^k\sum_{p=1}^{m_j}\sum_{\ell=1}^{t_{j,p}} 1 =\sum_{j=1}^k\sum_{p=1}^{m_j} \ceil{\log_2(n_{j,p+1}-n_{j,p})} \leq Ck\ceil{\log_2(n/k)}$, a combinatorial estimate yields that $|\Delta^B|\leq \binom{Ck\ceil{\log_2(n/k)}}{k}\leq (Ce\ceil{\log_2(n/k)})^k$. 

Now, using the basic inequality $(a+b)^2\leq 2(a^2 + b^2)$, it suffices to bound by the order $k\log\log(16 n/k)$ the following two terms:
\begin{align}\label{eq:term1}
\E \max_{ \{\delta_{j,p,\ell}^B\} \in \Delta^B }\sum_{j=0}^{k-1}\sum_{p=0}^{m_j-1}\sum_{\ell=1}^{t_{j,p}}\delta^B_{j,p,\ell}\max_{\tau\in I^B_{j,p,\ell}}\frac{\Big(\sum_{i\in(\tau;(I^B_{j,p,\ell})_+]}(i-\tau)^d\varepsilon_i\Big)^2}{\Big(n_{j,p+1} - (I^B_{j,p,\ell})_-\Big)^{2d+1}}
\end{align}
and
\begin{align}\label{eq:term2}
\E \max_{ \{\delta_{j,p,\ell}^B\} \in \Delta^B }\sum_{j=0}^{k-1}\sum_{p=0}^{m_j-1}\sum_{\ell=1}^{t_{j,p}}\delta^B_{j,p,\ell}\max_{\tau\in I^B_{j,p,\ell}}\frac{\Big(\sum_{i\in((I^B_{j,p,\ell})_+;n_{j,p+1}]}(i-\tau)^d\varepsilon_i\Big)^2}{\Big(n_{j,p+1} - (I^B_{j,p,\ell})_-\Big)^{2d+1}}.
\end{align}
From here on, in view of Theorem \ref{thm:lil}, the proof is essentially the same as that of Lemma 5.2 in \cite{gao2017minimax} (our \eqref{eq:term1} and \eqref{eq:term2} correspond to their (42) and (43)). For the sake of completeness, we will present the proof for the bound of \eqref{eq:term1}; the bound for \eqref{eq:term2} follows from essentially the proof of (43) in \cite{gao2017minimax}. 

Denote the variable in \eqref{eq:term1} as $Z$, i.e.,
\begin{align*}
Z\equiv  \max_{ \{\delta_{j,p,\ell}^B\} \in \Delta^B } \sum_{j=0}^{k-1}\sum_{p=0}^{m_j-1}\sum_{\ell=1}^{t_{j,p}}\delta^B_{j,p,\ell}\max_{\tau\in I^B_{j,p,\ell}}\frac{\Big(\sum_{i\in(\tau;(I^B_{j,p,\ell})_+]}(i-\tau)^d\varepsilon_i\Big)^2}{\Big(n_{j,p+1} - (I^B_{j,p,\ell})_-\Big)^{2d+1}}.
\end{align*}
We bound the tail probability of $Z$ as follows. For any $u\geq 0$ and small enough $c>0$,
\begin{align*}
&\Prob(Z > u)\\
&\leq \sum_{\{\delta_{j,p,\ell}^B\} \in \Delta^B} \Prob\bigg[\sum_{j=0}^{k-1}\sum_{p=0}^{m_j-1}\sum_{\ell=1}^{t_{j,p}}\delta^B_{j,p,\ell}\max_{\tau\in I^B_{j,p,\ell}}\frac{\Big(\sum_{i\in(\tau; (I_{j,p,\ell}^B)_+ ]}(i-\tau)^d\varepsilon_i\Big)^2}{\Big(n_{j,p+1} - (I_{j,p,\ell}^B)_-\Big)^{2d+1}} > u\bigg]\\
&\leq \sum_{\{\delta_{j,p,\ell}^B\} \in \Delta^B} e^{-cu} \prod_{j,p,\ell}\E\exp\bigg[c\delta^B_{j,p,\ell}\max_{\tau\in I_{j,p,\ell}^B }\frac{\Big(\sum_{i\in(\tau;(I_{j,p,\ell}^B)_+ ]}(i-\tau)^d\varepsilon_i\Big)^2}{\Big(n_{j,p+1} - (I_{j,p,\ell}^B)_-\Big)^{2d+1}}\bigg]\\
&\lesssim \sum_{\{\delta_{j,p,\ell}^B\} \in \Delta^B} e^{-cu}\cdot \exp\bigg(\sum_{j=0}^{k-1}\sum_{p=0}^{m_j-1}\sum_{\ell=1}^{t_{j,p}}C\delta^B_{j,p,\ell}\log\log\big(16(n_{j,p+1} - n_{j,p})\big)\bigg)\\
&\leq \exp\big(\log \abs{\Delta^B} - cu + C k\log\log(16n/k)\big)\leq \exp(-cu + Ck\log\log(16n/k)\big).
\end{align*}
Here, the second inequality follows from the independence of the partial sum processes over the partition $\{I^B_{j,p,\ell}\}$, the third inequality follows by choosing $c$ to be sufficiently small and then applying Theorem \ref{thm:lil} with $\psi(x) = \exp(cx^2)-1$ therein, and the fourth inequality follows from the fact that $n_{j,p+1} - n_{j,p}\leq n/k$ and that $\sum_{j=0}^{k-1}\sum_{p=0}^{m_j-1}\sum_{\ell=1}^{t_{j,p}}\delta^B_{j,p,\ell} \leq k$ for any $\{\delta_{j,p,\ell}^B\} \in \Delta^B$. The proof is now complete by integrating the tail estimate. 
\end{proof}

\appendix

\section{Proof of lower bounds}\label{sec:proof_lower}

\subsection{Lower bound in Section \ref{sec:gen_spline}}\label{subsec:noshape_lower}
\begin{proof}[Proof of Proposition \ref{prop:noshape_lower}]
We start with the first claim. In view of the fact that minimax rate over $\Theta(d,d_0,k)$ is non-decreasing in $k$ and $\Theta(d,d-1,k)\subset\Theta(d,d_0,k)$ for any $d_0\in[-1; d-1]$, it suffices to show that
\begin{align*}
\inf_{\td{\theta}}\sup_{\theta\in\Theta(d,d-1,2)} \E_{\theta}\|\td{\theta} - \theta\|^2 \geq c\log\log (16n).
\end{align*}
For this, we will apply a standard reduction argument to multiple hypothesis testing (cf. Theorem 2.5 of \cite{tsybakov2008introduction}). Define the following series of splines. Let $M\equiv \floor{\log_2 (n/(d+1))}$, and for each $\ell\in[1;M]$, $\tau_\ell\equiv \floor{(1-2^{-\ell})n}$ and $f^\ell(x)\equiv \alpha_\ell(x-\tau_\ell/n)_+^d$ with $\alpha_\ell \equiv c(2^\ell)^{(2d+1)/2}\sqrt{\log\log (16n)/n}$ for some sufficiently small $c$. Further define $f^0(x)\equiv 0$ on $[0,1]$, and the induced vectors $\theta^\ell_i \equiv f^\ell(i/n)$ for $i\in[1;n]$ and $\ell\in[0; M]$. Denote the corresponding joint distribution of $\{Y_i\}_{i=1}^n$ under the experiment \eqref{eq:model} with truth $\theta^\ell$ as $P_\ell$, $\ell\in[0;M]$. It can be readily verified that $\theta^\ell\in\Theta(d,d-1,2)$, and the Kullback-Leibler divergence between $P_0$ and each $P_\ell$, denoted as $\KL(P_0, P_\ell)$, satisfies
\begin{align*}
\KL(P_0, P_\ell) =\|\theta^0 - \theta^\ell\|^2/2 = \|\theta^\ell\|^2/2 \asymp \log\log (16n)
\end{align*}
for every $\ell\in[1;M]$. Moreover, for any $1\leq j<k\leq M$, it holds by direct calculation that 
\begin{align*}
d(P_j, P_k)&\equiv \|\theta^j - \theta^k\|^2 \geq \sum_{i\in(\tau_j,\tau_k]}(\theta^j_i - \theta^k_i)^2 \asymp \alpha_j^2\frac{(\tau_k - \tau_j)^{2d+1}}{n^{2d}}\\
&\asymp \alpha_j^2\frac{(n - \tau_j)^{2d+1}}{n^{2d}} \asymp \alpha_j^2 \frac{2^{-j(2d+1)}}{n} \asymp \log\log (16n).
\end{align*}  
Theorem 2.5 in \cite{tsybakov2008introduction} therefore entails the desired lower bound. 

Next, we prove the second claim. By following the same reduction as in the previous claim, it suffices to show that for any $k\geq k_0+1$, there exists some nonzero $f\in\cal{F}_n(d,d_0,k_0+1)$ such that $f(x) = 0$ for $x\in[0,c]\cup[1-c,1]$ with some universal $c$. Take $c = 1/3$. Let $\tau_0 \equiv 0$, $\tau_j \equiv 1/3+(j-1)/(3(k_0-1))$ for $j\in[1; k_0]$, and $\tau_{k_0+1} \equiv 1$. Define 
\begin{align*}
f(x)\equiv \Big(\sum_{j=1}^{k_0-1}\sum_{\ell=d_0+1}^d c^j_\ell(x-\tau_j)_+^\ell\Big)\cdot\bm{1}_{[1/3,2/3]}(x), \quad x\in[0,1].
\end{align*}
By definition, $f$ vanishes on $[0,1/3]\cup[2/3,1]$. Moreover, it can be readily checked that, for any real sequence $\{c_\ell^j\}_{j \in [1;k_0-1],\ell \in [d_0+1;d]}$, $f^{(\ell)}((\tau_j)_-) = f^{(\ell)}((\tau_j)_+)$ for $j\in[1;k_0-1]$ and $\ell\in[0;d_0]$. Therefore, in order to show that $f\in\cal{F}_n(d,d_0,k_0+1)$ and is non-zero, it suffices to show that there exists a non-zero realization of the sequence $\{c^j_\ell\}_{
j\in[1;k_0-1],\ell\in[d_0+1; d]}$ such that $f^{(\ell)}((\tau_{k_0)})_-) = f^{(\ell)}((\tau_{k_0})_+) = 0$ for all $\ell \in [0;d_0]$. This is equivalent to finding a non-zero solution for the homogeneous linear system $\boldsymbol{A}\boldsymbol{c}= \boldsymbol{b}$, where $\boldsymbol{c}\equiv \{c^j_\ell\}_{j\in[1;k_0-1],\ell\in[d_0+1;d]}\in \RR^{(k_0-1)(d-d_0)}$, $\boldsymbol{b}\equiv \boldsymbol{0}_{(k_0-1)(d-d_0)}$, and
\begin{align*}
\boldsymbol{A}\equiv
\begin{bmatrix}
\boldsymbol{A}_1 &  \boldsymbol{A}_2 & \ldots \, \boldsymbol{A}_{k_0-1}
\end{bmatrix}
\end{align*}
with
\begin{align*}
\bm{A}_j\equiv
\begin{bmatrix}
\underline{\odot}(d_0+1;0)\tau_{k_0,j}^{d_0+1} & \underline{\odot}(d_0+2;0)\tau_{k_0,j}^{d_0+2} & \ldots & \underline{\odot}(d;0)\tau_{k_0,j}^d & \\
\underline{\odot}(d_0+1;1)\tau_{k_0,j}^{d_0} & \underline{\odot}(d_0+2;1)\tau_{k_0,j}^{d_0+1} & \ldots & \underline{\odot}(d;1)\tau_{k_0,j}^{d-1} \\
&\ldots\\
\underline{\odot}(d_0+1;d_0)\tau_{k_0,j} & \underline{\odot}(d_0+2;d_0)\tau_{k_0,j}^2 &  \ldots & \underline{\odot}(d;d_0)\tau_{k_0,j}^{d-d_0}
\end{bmatrix}
\end{align*}
and $\tau_{j_1,j_2}\equiv \tau_{j_1} - \tau_{j_2}$. Note that the coefficient matrix $\boldsymbol{A}$ has $d_0+1$ rows and $(k_0-1)(d-d_0)$ columns, where, by definition of $k_0$, 
\begin{align*}
(k_0-1)(d-d_0) \geq d_0+2 \iff \floor{\frac{d+1}{d-d_0}} + 1\geq \frac{d+2}{d-d_0}.
\end{align*}
The above equivalence indeed holds since if $(d+1)/(d-d_0)$ is an integer, then
\begin{align*}
\floor{\frac{d+1}{d-d_0}} + 1 = \frac{d+1+(d-d_0)}{d-d_0}\geq \frac{d+2}{d-d_0},
\end{align*}
and if not
\begin{align*}
\floor{\frac{d+1}{d-d_0}} + 1\geq\ceil{\frac{d+1}{d-d_0}}\geq
\frac{d+2}{d-d_0}.
\end{align*}
This entails that the solution space of the linear system $\boldsymbol{A}\boldsymbol{c}= \boldsymbol{b}$ is of dimension at least one and thus the system is guaranteed to have a non-trivial solution. The proof is thus complete.
\end{proof}

\subsection{Lower bound in Section \ref{sec:gen_shape}}\label{subsec:shape_lower}

\begin{proof}[Proof of Proposition \ref{prop:cvx_linear_lower}]

We will continue to adopt the standard reduction to multiple testing (cf. Theorem 2.5 of \cite{tsybakov2008introduction}) as in the proof of Proposition \ref{prop:noshape_lower}. We first introduce a set of basis functions. Let $\td{k}\equiv k/3$ which we assume without loss of generality to be an integer, $\ell_0\equiv \floor{\log_2(n/(2\td{k}))}$, and  $\tau_\ell\equiv (1-2^{-(\ell-1)})/\td{k}$ for $\ell\in[1;\ell_0 + 1]$. Next, for $x\in[0,1/\td{k}]$, let $\td{f}_\ell(x)\equiv c(2^{\ell-1})^{3/2}\sqrt{\log\log(16n/k)/n}(x-\tau_\ell)_+$ for $\ell\in[1;\ell_0]$ and $f_{\text{ref}}(x)\equiv c(2^{\ell_0})^{3/2}\sqrt{\log\log(16n/k)/n}(x-\tau_{\ell_0+1})_+$ (here the subscript ``ref"  stands for ``reference" and $f_{\text{ref}}$ will be pieced together later to be the true signal underlying the distribution $P_0$ in Theorem 2.5 of \cite{tsybakov2008introduction}). Then let $f_\ell(x) \equiv \td{f}_\ell(x)\vee f_{\text{ref}}(x)$, and it can be verified that $f_\ell(x) = \td{f}_\ell(x)$ on $[0,\tau_{\ell_0+1}]$. The above set of functions resembles those constructed in the proof of Proposition \ref{prop:noshape_lower}, and satisfies the similar properties
\begin{align}\label{eq:separation}
\sum_{i:(i/n)\in(0, 1/\td{k}]} \big(f_{\ell}(i/n) - f_{\ell^\prime}(i/n)\big)^2 \geq c\log\log(16n/k)
\end{align}
for any $1\leq \ell\neq \ell^\prime\leq \ell_0$, and
\begin{align}\label{eq:KL}
&\sum_{i:(i/n)\in(0, 1/\td{k} ]} \big(f_{\ell}(i/n) - f_{\text{ref}}(i/n)\big)^2 \leq \sum_{i:(i/n)\in(0, 1/\td{k} ]}\big(f_{\ell}(i/n)\big)^2 \nonumber \\
& \leq 2\bigg(\sum_{i:(i/n)\in(0, 1/\td{k} ]}\big(\td{f}_{\ell}(i/n)\big)^2 + \sum_{i:(i/n)\in(0, 1/\td{k} ]}\big(f_{\text{ref}}(i/n)\big)^2\bigg) \nonumber\\
&\leq C\log\log(16n/k).
\end{align}
We now construct the hypotheses in the multiple testing framework. For $j\in[1;\td{k}]$, let $f^j_\ell(\cdot), f^j_{\text{ref}}(\cdot)$ be a set of functions defined on $[(j-1)/\td{k},j/\td{k}]$ as follows. Let $f^1_\ell(x) \equiv f_\ell(x)$ and $f^1_{\text{ref}}(x)\equiv f_{\text{ref}}(x)$ as defined above. Next, for $j\in[2;k]$, we define inductively $f^j_{\text{ref}}(x)\equiv f^{j-1}_{\text{ref}}(x) + f_{\text{ref}}(x-(j-1)/\td{k})$, where $f^{j-1}_{\text{ref}}(x)$ for $x\in[(j-1)/\td{k},j/\td{k}]$ is to be understood as the extension from $[(j-2)/\td{k},(j-1)/\td{k}]$. Also define $f^j_\ell(x)\equiv f^j_{\text{ref}}(x) + f_\ell(x-(j-1)/\td{k})$. Lastly, we piece them together as
\begin{align*}
f^0(x) \equiv \sum_{j=1}^{\td{k}}f^j_{\text{ref}}(x)\bm{1}_{((j-1)/\td{k},j/\td{k}]}(x)
\end{align*} 
and
\begin{align*}
f^{\bm{\ell}}(x) \equiv \sum_{j=1}^{\td{k}}f^j_{\ell_j}(x)\bm{1}_{((j-1)/\td{k},j/\td{k}]}(x),
\end{align*}
where $\bm{\ell} = (\ell_1,\ldots,\ell_{\td{k}})^\top\in[1;\ell_0]^{\td{k}}$. One can readily verify that all of the $f^0$ and $f^{\bm{\ell}}$ belong to the class $\cal{F}_n^*(1,k)$. Indeed, continuity follows directly from the construction and since there are at most $3$ pieces on each of $[(j-1)/\td{k},j/\td{k}]$, there will be at most $3\td{k}=k$ pieces in total. Therefore, the sequence counterparts $\theta^0\equiv (f^0(i/n))_i$ and $\theta^{\bm{\ell}} \equiv (f^{\bm{\ell}}(i/n))_i$ belong to $\Theta^*(1,k)$. 

Let $\rho(\cdot,\cdot)$ denote the Hamming distance. Then, the Gilbert-Varshamov bound (cf. Theorems 5.1.7 and 5.1.9 in \cite{van1999introduction}) entails that with some small $c > 0$, there exists a subset $\cal{S}\subset[1;\ell_0]^{\td{k}}$ with cardinality $|\cal{S}| \asymp \ell_0^{c\td{k}}$ such that $\rho(\bm{\ell},\bm{\ell}^\prime)\geq c\td{k}$ for any $\bm{\ell}\neq \bm{\ell}^\prime\in\cal{S}$. Adopting those in $\cal{S}$ as the truth in the experiment \eqref{eq:model}, we obtain a total of $M\equiv 1+\abs{\mathcal{S}} \asymp \ell_0^{c\td{k}}$ hypotheses, which we denote as $P^0$ and $P^{\bm{\ell}}$, $\bm{\ell}\in\cal{S}$.

It remains to verify: (i) $\|\theta^{\bm{\ell}} - \theta^{\bm{\ell}^\prime}\|^2 \geq ck\log\log(16n/k)$ for any $\bm{\ell}\neq\bm{\ell}^\prime\in\cal{S}$; (ii) $\KL(P^0, P^{\bm{\ell}})\leq C\log|\cal{S}|$ for any $\bm{\ell}\in\cal{S}$. We first verify (i). By definition of $\theta^{\bm{\ell}}$ and $\theta^{\bm{\ell}^\prime}$, on each $[(j-1)/\td{k}, j/\td{k}]$ such that $\ell_j\neq \ell_j^\prime$, we have by \eqref{eq:separation},
\begin{align*}
\sum_{i:\frac{i}{n}\in(\frac{j-1}{\td{k}},\frac{j}{\td{k}}]}(\theta^{\bm{\ell}}_i - \theta^{\bm{\ell}^\prime}_i)^2 &= \sum_{i:\frac{i}{n}\in(\frac{j-1}{\td{k}},\frac{j}{\td{k}}]}\bigg[f_{\ell_j}\bigg(\frac{i}{n} - \frac{j-1}{\td{k}}\bigg) - f_{\ell_j^\prime}\bigg(\frac{i}{n} - \frac{j-1}{\td{k}}\bigg)\bigg]^2\\
&= \sum_{i:\frac{i}{n}\in(0,\frac{1}{\td{k}}]}\bigg[f_{\ell_j}\bigg(\frac{i}{n}\bigg) - f_{\ell_j^\prime}\bigg(\frac{i}{n}\bigg)\bigg]^2 \geq c\log\log(16n/k).
\end{align*}
This entails that
\begin{align*}
\|\theta^{\bm{\ell}} - \theta^{\bm{\ell}^\prime}\|^2 \geq \rho(\bm{\ell},\bm{\ell}^\prime)c\log\log(16n/k) \geq ck\log\log(16n/k).
\end{align*}
Similarly, for (ii), we have by \eqref{eq:KL}
\begin{align*}
\KL(P^0,P^{\bm{\ell}}) &= \|\theta^0 - \theta^{\bm{\ell}}\|^2/2 \leq C\td{k}\cdot\sum_{i:\frac{i}{n}\in(0,\frac{1}{\td{k}}]}\bigg[f_{\ell_j}\bigg(\frac{i}{n}\bigg) - f_{\text{ref}}\bigg(\frac{i}{n}\bigg)\bigg]^2\\
&\leq C\td{k}\log\log(16n/k) \asymp \log|\cal{S}|.
\end{align*}
Application of Theorem 2.5 in \cite{tsybakov2008introduction} then completes the proof.
\end{proof}

\begin{proof}[Proof of Theorem \ref{thm:shape_upper} (lower bound)] This is immediate by realizing that $\Theta^*(d,2)\subset \Theta^*(d,k)$ for $k\geq 2$ and the lower bound construction in the first part of the proof of Proposition \ref{prop:noshape_lower} can be directly applied to establish a lower bound for $\Theta^*(d,2)$. 
\end{proof}

\section{Proof of Theorem \ref{thm:lil}}\label{sec:proof_lil}

\begin{proof}[Proof of Theorem \ref{thm:lil}]
We first claim that there exists some $c=c(d)$ such that for any $t>0$, the event 
\begin{align*}
\cal{E}_1\equiv \bigg\{\max_{1\leq n_1<n_2\leq n}(n_2-n_1)^{-d}(n_2\wedge (n-n_1))^{-1/2}\bigg|\sum_{(n_1;n_2]} (i-n_1)^d\varepsilon_i\bigg|\geq t\bigg\}
\end{align*}
is contained in the event
\begin{align*}
\cal{E}_2 \equiv \bigg\{\max_{1\leq n_1<n_2\leq n}(n_2\wedge (n-n_1))^{-1/2}\bigg|\sum_{(n_1;n_2]} \varepsilon_i\bigg|\geq ct\bigg\}.
\end{align*}
On $\cal{E}_2^c$, for any $1\leq n_1 < n_2 \leq n$, it holds that $\big|\sum_{(n_1;n_2]} \varepsilon_i\big|\leq c(n_2\wedge (n-n_1))^{1/2}t$. Then, 
\begin{align}
\bigg|\sum_{(n_1;n_2]}\varepsilon_i(i-n_1)^d\bigg| &= \biggabs{\sum_{i \in (n_1;n_2]} \epsilon_i \sum_{j=1}^{i-n_1} \big(j^d-(j-1)^d\big)} \notag\\
&= \bigg|\sum_{j=1}^{n_2-n_1}\big(j^d - (j-1)^d\big)\sum_{i\in[n_1+j;n_2]}\varepsilon_i\bigg|\notag\\
&\leq \sum_{\ell=0}^{d-1}{d\choose \ell}\sum_{j=1}^{n_2-n_1}(j-1)^\ell\bigg|\sum_{i\in[n_1+j;n_2]}\varepsilon_i\bigg|\notag\\
&\leq ct\cdot \sum_{\ell=0}^{d-1}{d\choose \ell}\sum_{j=1}^{n_2-n_1}(j-1)^\ell\parr*{\sqrt{n_2}\wedge \sqrt{n-n_1-(j-1)}}\notag\\
&\leq 2ct\cdot \sum_{\ell=0}^{d-1}{d\choose \ell}\int_{0}^{n_2-n_1}x^\ell\parr*{\sqrt{n_2}\wedge \sqrt{n-n_1-x}} \d x \label{eq:han-new1}\\
&\leq 4 ct\cdot \sum_{\ell=0}^{d-1}{d\choose \ell}(n_2-n_1)^{\ell+1}\parr*{n_2\wedge (n-n_1)}^{1/2} \label{eq:han-new2}\\
&\leq c2^{d+2}t\cdot (n_2-n_1)^d\parr*{n_2\wedge (n-n_1)}^{1/2}, \notag
\end{align} 
where the inequality \eqref{eq:han-new1} follows from the fact that the map $x\mapsto x^\ell (\sqrt{n_2}\wedge \sqrt{n-n_1-x})$ first increases and then decreases on $[0,n-n_1]$, and the inequality \eqref{eq:han-new2} follows from a separate discussion of $n_2\leq n-n_1$ and $n_2>n-n_1$ and the following two bounds:  $\int_0^{n_2-n_1}x^\ell\ \d{x} = (\ell+1)^{-1}(n_2-n_1)^{\ell+1}$ and 
\begin{align*}
&\int_0^{n_2-n_1}x^\ell\sqrt{n-n_1-x}\ \d{x} \leq (n_2-n_1)^\ell \int_{n_1}^{n_2}\sqrt{n-x}\ \d{x}\\
&= (n_2-n_1)^\ell\int_{n-n_2}^{n-n_1}\sqrt{x}\ \d{x} = (n_2-n_1)^\ell\cdot\frac{2}{3}\parr*{(n-n_1)^{3/2}-(n-n_2)^{3/2}} \\
&= (n_2-n_1)^\ell\cdot\frac{2}{3}\frac{(n_2-n_1)\big[(n-n_1)^2 + (n-n_1)(n-n_2)+(n-n_2)^2\big]}{(n-n_1)^{3/2}+(n-n_2)^{3/2}}\\
&\leq 2(n_2-n_1)^{\ell+1}(n-n_1)^{1/2}.
\end{align*} 
Therefore the claim holds by choosing $c= 2^{-(d+2)}$. This entails that, for any $t > 0$,
\begin{align*}
\Prob(Z\geq t) &\leq \Prob\bigg(\max_{1\leq n_1<n_2\leq n}(n_2\wedge (n-n_1))^{-1/2}\bigg|\sum_{(n_1;n_2]} \varepsilon_i\bigg|\geq ct\bigg)\\
&\leq \Prob\bigg(\max_{n_1<n_2}\frac{\big|\sum_{(n_1;n_2]} \varepsilon_i\big|}{(n-n_1)^{1/2}}\geq ct\bigg) + \Prob\bigg(\max_{n_1<n_2}\frac{\big|\sum_{(n_1;n_2]} \varepsilon_i\big|}{n_2^{1/2}}\geq ct\bigg)\\
&\equiv (I) + (II).
\end{align*}
Due to symmetry, we only bound $(I)$. By the triangle inequality, 
\begin{align*}
(I)\leq \Prob\bigg(\sup_{n_1<n_2}\frac{\big|\sum_{i=n_1+1}^{n}\varepsilon_i\big|}{(n-n_1)^{1/2}}> ct/2\bigg) + \Prob\bigg(\sup_{n_1<n_2}\frac{\big|\sum_{i=n_2+1}^n\varepsilon_i\big|}{(n-n_1)^{1/2}}> ct/2\bigg).
\end{align*}
By L\'evy's maximal inequality (cf. Theorem 1.1.5 of \cite{de2012decoupling}), the first probability is bounded by
\begin{align*}
\sum_{r=1}^{\ceil{\log_2 n}}\Prob\bigg(\sup_{2^{r-1}\leq (n-n_1)< 2^r}2^{-(r-1)/2}\bigg|\sum_{i=n_1+1}^n \varepsilon_i\bigg|\geq ct/2\bigg)
\leq 9\ceil{\log_2 n}e^{-c' t^2 }.
\end{align*}
Similarly, the second inequality is bounded by
\begin{align*}
&\sum_{r=1}^{\ceil{\log_2 n}}\Prob\bigg(\sup_{\substack{2^{r-1}\leq (n-n_1)<2^r\\ 1\leq n_1<n_2\leq n}}(n-n_1)^{-1/2}\bigg|\sum_{i=n_2+1}^{n}\varepsilon_i\bigg|\geq ct/2\bigg)\\
&\leq \sum_{r=1}^{\ceil{\log_2 n}}\Prob\bigg(\sup_{n-2^r<n_2\leq n}2^{-(r-1)/2}\bigg|\sum_{i=n_2+1}^n \varepsilon_i\bigg|\geq ct/2\bigg)\leq 9\ceil{\log_2 n}e^{-c' t^2}.
\end{align*}
Putting together the pieces, it holds that $\Prob(Z\geq t)\leq 18\ceil{\log_2 n}e^{-c''t^2}$, where we take $c''<c_0$ without loss of generality. Now, if $\psi(\cdot)$ is bounded on $[0,\infty)$ by some $C$, then the result holds trivially. Otherwise, $\psi(x)\uparrow\infty$ as $x\rightarrow\infty$, and integration by parts yields that for any $x_0\geq 0$,
\begin{align*}
\E\psi(Z) &= \int_0^\infty \Prob(\psi(Z)\geq t)\ \d{t} = \int_0^\infty \Prob(Z\geq\psi^{-1}(t))\ \d{t}\\
&\leq \int_0^\infty \bigg\{1\wedge \big[C\log (16n)\cdot e^{-c^{\prime\prime} (\psi^{-1}(t))^2}\big]\bigg\}\ \d{t}\\
&\leq x_0 + C\cdot\int_{x_0
}^\infty \log (16n)\cdot e^{-c^{\prime\prime} (\psi^{-1}(t))^2}\ \d{t}.
\end{align*}
By monotonicity of $\psi^{-1}$, for any $t\geq x_0$, $\psi^{-1}(t)\geq \psi^{-1}(t)/2+\psi^{-1}(x_0)/2$, so the integral above can be further bounded by
\begin{align*}
\int_{x_0}^\infty \big[\log (16n) \cdot e^{-(c''/4)(\psi^{-1}(x_0))^2}\big] e^{-(c''/4)(\psi^{-1}(t))^2}\ \d{t}\leq \int_1^\infty e^{-(c''/4)(\psi^{-1}(t))^2}\ \d{t},
\end{align*}
provided that $x_0\geq 1$ and $\log (16n) \cdot e^{-(c''/4)(\psi^{-1}(x_0))^2}\leq 1$, or equivalently, $x_0 \geq 1\vee \psi\big(\sqrt{(4/c'')\log\log (16n)}\big)$. The claim now follows from the condition (\ref{cond:lil}).
\end{proof}

\section{Proofs for technical results in Section \ref{sec:noshape_upper}}\label{sec:proof_noshape}

\subsection{Proof of Proposition \ref{prop:reduction}}

\begin{proof}[Proof of Proposition \ref{prop:reduction}]
The basic inequality $\|Y-\widehat{\theta}\|^2\leq \|Y-\theta_\ora\|^2$ entails that
\begin{align*}
\|\widehat{\theta} - \theta_0\|^2 \leq \|\theta_\ora - \theta_0\|^2 + 2\varepsilon\cdot \big(\widehat{\theta} - \theta_\ora\big).
\end{align*}
Then we have, for any $\eta > 0$,
\begin{align*}
\varepsilon\cdot\big(\widehat{\theta} - \theta_\ora\big) &= \sum_{j=0}^{k-1}(\varepsilon_{[j]}\cdot (\widehat{\theta} - \theta_\ora)_{[j]}) = \sum_{j=0}^{k-1}(\varepsilon_{[j]}\cdot v_j(\widehat{\theta}))\|(\widehat{\theta} - \theta_\ora)_{[j]}\|\\
&\leq \eta^{-1}\cdot \sum_{j=0}^{k-1}\big(\varepsilon_{[j]}\cdot v_j(\widehat{\theta})\big)^2 + \eta\cdot \sum_{j=0}^{k-1}\|(\widehat{\theta} - \theta_\ora)_{[j]}\|^2\\
&= \eta^{-1}\cdot \sum_{j=0}^{k-1}\big(\varepsilon_{[j]}\cdot v_j(\widehat{\theta})\big)^2 + \eta\cdot\|\widehat{\theta} - \theta_\ora\|^2.
\end{align*}
Applying the inequality $\|\widehat{\theta} - \theta_\ora\|^2\leq 2\big(\|\widehat{\theta} - \theta_0\|^2 + \|\theta_\ora-\theta_0\|^2\big)$ then yields that
\begin{align*}
\|\widehat{\theta} - \theta_0\|^2 \leq \frac{1+2\eta}{1-2\eta}\|\theta_\ora - \theta_0\|^2 + \frac{1}{\eta(1-2\eta)}\sum_{j=0}^{k-1}(\varepsilon_{[j]}\cdot v_j(\widehat{\theta}))^2.
\end{align*}
For any given $\delta > 0$, choosing $\eta = \delta/(2\delta+4)$, upper bounding the right-hand side by the supremum over $\Theta(d,d_0,k)$, and then taking expectation on both sides yield the desired result. 
\end{proof}

\subsection{Proof of Lemma \ref{lemma:para_coef}}
\begin{proof}[Proof of Lemma \ref{lemma:para_coef}]
On the pieces $(n_{i-1}/n, n_i/n]$ and $(n_i/n,n_{i+1}/n]$, the function $f$ can be parametrized as
\begin{align*}
f_{i-1}(x) \equiv \sum_{q=1}^{d+1}a_q^{i-1}\bigg(x-\frac{n_{i-1}}{n}\bigg)^{q-1}, \quad f_i(x) \equiv \sum_{q=1}^{d+1}a_q^i\bigg(x-\frac{n_i}{n}\bigg)^{q-1}.
\end{align*}
By the fact that $0\leq p-1 \leq d_0$ and thus the continuity of the $(p-1)$th derivative at knot $n_i/n$, it holds that $f_{i-1}^{(p-1)}(n_i/n) = f_i^{(p-1)}(n_i/n)$. But
\begin{align*}
&f_{i-1}^{(p-1)}\bigg(\frac{n_i}{n}\bigg) = \sum_{q=1}^{d+1}a_q^{i-1}\frac{\text{d}^{p-1}}{\text{d}x^{p-1}}\bigg(x-\frac{n_{i-1}}{n}\bigg)^{q-1}\bigg|_{x=\frac{n_i}{n}} = \sum_{q=p}^{d+1}a^{i-1}_q\ldot(q-1;p-1)n_{i;i-1}^{q-p},\\
&f_i^{(p-1)}\bigg(\frac{n_i}{n}\bigg) = \sum_{q=1}^{d+1}a^i_q\frac{\text{d}^{p-1}}{\text{d}x^{p-1}}\bigg(x-\frac{n_i}{n}\bigg)^{q-1}\bigg|_{x=\frac{n_i}{n}} = (p-1)!a^i_p.
\end{align*}
This entails that
\begin{align*}
(p-1)!a^i_p = \sum_{q=p}^{d+1} \underline{\odot}(q-1;p-1)a^{i-1}_qn_{i;i-1}^{q-p} = \sum_{q=p}^{d+1} \frac{(q-1)!}{(q-p)!}a^{i-1}_qn_{i;i-1}^{q-p}.
\end{align*}
This implies that $\co{a^i_p; a^{i-1}_q} = (q-1)!/((q-p)!(p-1)!)n_{i;i-1}^{q-p} = {q-1 \choose p-1}n_{i;i-1}^{q-p}$ if $q\geq p$; otherwise it is 0. 
\end{proof}

\subsection{Proof of Lemma \ref{lemma:quad_forms}}
\begin{proof}[Proof of Lemma \ref{lemma:quad_forms}]
The baseline case $s = 0$ follows from the condition $\|\theta\|\leq 1$ and application of Lemma \ref{lemma:diagonal} to the piece $(n_{k_0-1};n_{k_0}]$. The iteration from $s$ to $s+1$ then follows from Lemma \ref{lemma:cancellation}, which is to be stated and proved in Appendix \ref{sec:auxiliary} with its conditions satisfied since $n_{k_0;k_0-1} \geq \max\{n_{2;1},n_{3;2},\ldots, n_{k_0-1;k_0-2}\}$ by \eqref{cond:end_long}.
\end{proof}

\subsection{Proof of Lemma \ref{lemma:last_coef}}
\begin{proof}[Proof of Lemma \ref{lemma:last_coef}]
Fix $i\leq k_0-2$ as in the lemma statement. For simplicity, we again work under the condition $n_{k_0;k_0-1} = \max\{n_{2;1},\ldots, n_{k_0;k_0-1}\}$. We will prove by induction: suppose the desired estimates hold for $a^i_\ell$, $\ell\in[d_0+1;\ell_0]$ for some $\ell_0\in[d_0+1; d]$ and we will prove that the estimate also holds for $a^i_{\ell_0+1}$. The condition of the lemma serves as the baseline $\ell_0 = d_0+1$. For the general induction from $\ell_0$ to $\ell_0+1$, let $L\equiv 1 + (d-d_0)(k_0-1-i)$. Then, Lemma \ref{lemma:quad_forms} entails that
\begin{align*}
1&\gtrsim \frac{(n-n_{k_0-1})^{2(\ell_0+1-L)+1}}{n^{2(\ell_0+1-L)}}\bigg(\sum_{\ell=\ell_0+2-L}^{\ell_0+1}\overline{\beta}^{k_0-1-i}_{\ell_0+2-L,\ell-(\ell_0+2-L)} a^i_{\ell} \bigg)^2.
\end{align*}
On the other hand, we have
\begin{align*}
1\gtrsim \sum_{\ell=\ell_0+2-L}^{\ell_0+1} n_{i+1;i}^{2(\ell-1)}M^{2}(n_{i+1},n_i)(a^i_{\ell})^2,
\end{align*} 
where the summands with $\ell\in[\ell_0+2-L;d_0+1]$ are from the condition of the lemma and those with $\ell\in[d_0+2;\ell_0+1]$ are from the induction assumption. Now, combining the above two estimates and applying Lemma \ref{lemma:two_quad} iteratively to cancel every $a^i_\ell$, $\ell\in[\ell_0+2-L;\ell_0]$, we have
\begin{align*}
1\gtrsim (a^i_{\ell_0+1})^2((I)\wedge (II)),
\end{align*}
where
\begin{align*}
(I)&\equiv \frac{(n-n_{k_0-1})^{2(\ell_0+1-L)+1}}{n^{2(\ell_0+1-L)}}(\overline{\beta}^{k_0-1-i}_{\ell_0+2-L,L-1})^2,\\
(II)&\equiv \bigwedge_{\ell = \ell_0+2-L}^{\ell_0}n_{i+1;i}^{2(\ell-1)}M(n_{i+1},n_i)\frac{(\overline{\beta}^{k_0-1-i}_{\ell_0+2-L,L-1})^2}{(\overline{\beta}^{k_0-1-i}_{\ell_0+2-L, \ell-(\ell_0+2-L)})^2}.
\end{align*}
By Lemma \ref{lemma:property_beta} and the condition $n_{k_0;k_0-1} = \max\{n_{2;1},\ldots, n_{k_0;k_0-1}\}$, we obtain that $(I)\gtrsim n_{i+1;i}^{2\ell_0}M(n_{i+1},n_i)$. Similarly, by Lemma \ref{lemma:property_beta}, as the factors $n_{\cdot;\cdot}$'s in the lower bound of ${(\overline{\beta}^{k_0-1-i}_{\ell_0+2-L,L-1})^2}/{(\overline{\beta}^{k_0-1-i}_{\ell_0+2-L, \ell-(\ell_0+2-L)})^2}$ can all be further bounded below by $n_{i+1;i}$, we obtain by direct calculation that $(II)\gtrsim n_{i+1;i}^{2\ell_0}M(n_{i+1},n_i)$. Putting together the lower bounds for $(I)$, $(II)$ completes the induction.
\end{proof}

\subsection{General statement of Lemma \ref{lemma:coef_est_1}}\label{subsec:coef_est_1}

We restate here Lemma \ref{lemma:coef_est_1} for the case of general $d_0\in[-1;d-1]$. Introduce the following notation:
\begin{align*}
\underline{\odot}n_{\cdot;j}(a,b,c)\equiv  n_{a;j}^c\cdot n_{a-1;j}^c\ldots  n_{a-\floor{b/c}-1;j}^{\Mod(b;c)}
\end{align*}
for positive integers $a,b,c$. Fix $i\geq 2$. Recall the definition $M(a,b) = (a\wedge (n-b))^{1/2}$ for $a,b\in[1;n]$ and the condition (\ref{cond:end_long}). 
\begin{lemma}\label{lemma:coef_est_1-general}
The following estimates hold for all locations $1\leq j\leq i+1$:
\begin{align*}
1 &\gtrsim \max_{1\leq \ell \leq d_0+1} \bigg\{n_{i+2;j}^{2\ell \vee 2(d-d_0)(k_0-i-2)}\\
&\times \underline{\odot}n_{\cdot;j}^2\bigg(i+1, \big\{\ell-(d-d_0)(k_0-i-2)-1\big\}\wedge \big\{(d-d_0)(i+1-j)\big\} , d-d_0\bigg)\\
& \times n_{j+1;j}^{2(\ell-1-(d-d_0)(k_0-j-1))_+}\bigg\}\cdot M^2(n_{j+1},n_j).
\end{align*}
In particular, for $j=i+1$:
\begin{align*}
1 & \geq  c\max_{1\leq \ell\leq d_0+1} \bigg\{(a^{i+1}_\ell)^2 \cdot n_{i+2;i+1}^{2(\ell-1)} \cdot M^2(n_{i+2},n_{i+1}) \bigg\}.
\end{align*}	
\end{lemma}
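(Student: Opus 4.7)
The plan is to extend the inductive argument used for the special case $d_0=d-1$ (presented in the proof of Lemma \ref{lemma:coef_est_1}) to general $d_0\in[-1;d-1]$. The induction is again on $j\in[1;i+1]$, with the target being a collection of magnitude bounds on the shared polynomial coefficients $\{a^j_\ell\}_{\ell\in[1;d_0+1]}$; Lemma \ref{lemma:last_coef} then automatically promotes these to the nuisance coefficients $\{a^j_\ell\}_{\ell\in[d_0+2;d+1]}$, so it suffices to control the shared ones. The baseline $j=1$ follows, exactly as in the proof of Lemma \ref{lemma:coef_est_1}, from condition \eqref{cond:end_long} together with Lemma \ref{lemma:diagonal} applied to $(n_0;n_1]$, after noting that the factors $\underline{\odot}n_{\cdot;1}^2$ and $n_{2;1}^{2(\ell-1-(d-d_0)(k_0-2))_+}$ collapse to the appropriate powers at this baseline.

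For the inductive step from $j$ to $j+1$, I mirror the Part I / Part II decomposition. Part I handles the lower tranche of $\{a^{j+1}_\ell\}$ by applying Lemma \ref{lemma:quad_forms} at the maximal cancellation depth $s=\floor{(d_0+1)/(d-d_0)}$: the resulting quadratic forms each involve a full block of $(d-d_0)$ coefficients at location $j+1$. Using Lemma \ref{lemma:para_coef}, each such form is rewritten as a linear combination of $\{a^j_q\}_{q\geq\ell}$, and combining with the induction hypothesis via Lemma \ref{lemma:wedge} yields an isolated bound on the single uncancelled $a^j_{d+1}$. Via the monotonicity properties of $\overline{\beta}^s_{\cdot,\cdot}$ (Lemma \ref{lemma:property_beta}) and the triangle inequality, this is then transferred back to the desired bounds on the first group of $\{a^{j+1}_\ell\}$. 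Part II handles the remaining (higher-index) shared coefficients by iterating over $s<\floor{(d_0+1)/(d-d_0)}$: each smaller $s$ produces quadratic forms of shorter length, and Lemma \ref{lemma:two_quad} is applied repeatedly to cancel all but the next highest-index shared coefficient, in direct parallel with (R.2)--(R.$j$) in the proof of Lemma \ref{lemma:coef_est_1}. The critical role of $k_0 = \floor{(d+1)/(d-d_0)}+1$ manifests in the fact that the cancellation index cannot exceed $\floor{(d_0+1)/(d-d_0)}$, which is the direct analogue of the degree-of-freedom count \eqref{ineq:proof_boundary}.

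The main obstacle is bookkeeping. With $(d-d_0)$-component quadratic forms (rather than single-component forms when $d_0=d-1$), each cancellation step collectively handles a whole block of coefficients, and the resulting geometric ratios $(\overline{\beta}^s_{i,d-d_0})^2/(\overline{\beta}^s_{i,q})^2$ produce products of factors $n_{\cdot;\cdot}$ that no longer reduce to powers of a single $n_{\cdot;\cdot}$. This is precisely what the notation $\underline{\odot}n_{\cdot;j}(a,b,d-d_0)$ records, with the $\Mod(b;d-d_0)$ exponent capturing the incomplete terminal block of a partially cancelled form. Verifying that the induction hypothesis advances by exactly one step in $j$ — the analogue of inequality \eqref{ineq:general_i_1} — will require a careful case analysis depending on the location of $\ell$ relative to the thresholds $(d-d_0)(k_0-i-2)+1$ and $(d-d_0)(k_0-j-1)+1$, which generalize the thresholds $d-i+1$ and $d-j+1$ encountered in the $d_0=d-1$ case; each case reduces, after bounding the $n_{d+2+j-k-r;j+1}^{-1}$ factors by $n_{i+2;j+1}^{-1}$ or $n_{j+2;j+1}^{-1}$ as appropriate, to a direct comparison of exponents on the $n_{\cdot;j+1}$ factors.
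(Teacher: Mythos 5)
Your proposal is correct and follows the same approach as the paper, which simply asserts that the proof for general $d_0$ is ``completely analogous'' to the one given for $d_0=d-1$ in Lemma~\ref{lemma:coef_est_1}; your roadmap spells out the content of that analogy (induction on $j$, the Part~I/Part~II split, promotion via Lemma~\ref{lemma:last_coef}, and the $\underline{\odot}n_{\cdot;j}$ bookkeeping replacing single-factor powers). One small slip worth fixing: in Part~I the cancellation depth fed into Lemma~\ref{lemma:quad_forms} must track the induction step, namely $s=k_0-j-2$, so that the resulting quadratic forms involve coefficients $a^{j+1}_{\cdot}$ of the current piece (in the $d_0=d-1$ template this is exactly $s=d-j$); using the fixed maximal depth $s=\floor{(d_0+1)/(d-d_0)}$ for every $j$, as your sketch states, would always produce quadratic forms in $a^{1}_{\cdot}$ rather than $a^{j+1}_{\cdot}$ and the induction would not advance.
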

The proof for this general case is completely analogous to the one presented in Section \ref{subsec:noshape_main}.

\section{Proofs for technical results in Section \ref{sec:shape_upper}}\label{sec:proof_shape}

\subsection{Proof of Lemma \ref{lemma:shape_para}}
\begin{proof}[Proof of Lemma \ref{lemma:shape_para}]
For any $f\in\cal{F}_n^*(d,k)$, let $f_\circ\equiv f_\circ(f)\in \cal{F}^*_n(0,k)$ be such that $f = (I^d_{r_0,\ldots,r_{d-1};0} f_\circ)$ for some real sequence $\{r_\ell\}_{\ell=0}^{d-1}$, with corresponding knots $\{n_j\}_{j=1}^{k-1} = \{n_j(f_\circ)\}_{j=1}^{k-1}$ and magnitudes $\{\mu_j\}_{j=1}^k = \{\mu_j(f_\circ)\}_{j=1}^k$ between $(n_{j-1}/n,n_j/n]$, i.e., $f_\circ(x) = \sum_{j=1}^k \mu_j \bm{1}_{(n_{j-1}/n,n_j/n]}(x)$ for $x \in (0,1]$. Then $\mu_1\leq \ldots \leq \mu_k$. Let
\[
j^* \equiv j^*(f_\circ) \equiv \max\{1\leq j\leq k: \mu_j \leq 0\}.
\]
Define two sequences $\{\td{a}_j\}_{j=1}^{j^*}$ and $\{\td{b}_j\}_{j=j^*}^{k-1}$ as follows: $\td{a}_{j^*} \equiv \mu_{j^*} \leq 0$ and $\td{a}_j \equiv \mu_j - \mu_{j+1}\leq 0$ for $j\in[1;j^*-1]$, $\td{b}_{j^*}\equiv \mu_{j^*+1}\geq 0$ and $\td{b}_j\equiv \mu_{j+1} - \mu_j \geq 0$ for $j\in[j^*+1;k-1]$. 
Then, letting $\tau_j\equiv n_j/n$, $f_\circ$ can be re-parametrized as
\[
f_\circ(x) = \sum_{j=1}^{j^*} \td{a}_j\bm{1}_{(0, \tau_j]}(x) + \sum_{j=j^*}^{k-1} \td{b}_j\bm{1}_{(\tau_j, 1]}(x), \quad x\in (0,1].
\]
Define the function $g^-_\ell(x;\tau) \equiv (\tau-x)_+^\ell$ with any parameter $\tau\in[0,1]$. Then, direct calculation shows that 
\begin{align*}
\int_0^x g^-_\ell(u;\tau)\ \d{u} &= \int_0^{x \wedge \tau} (\tau-u)^\ell\ \d{u}= \int_{\tau-x\wedge \tau}^\tau u^\ell\ \d{u}\\
&= \frac{\tau^{\ell+1}}{\ell+1} - \frac{(\tau-x)_+^{\ell+1}}{\ell+1} = \frac{\tau^{\ell+1}}{\ell+1} + \frac{(-1)}{\ell+1} \cdot g^-_{\ell+1}(x;\tau).
\end{align*}
Similarly, with $g^+_\ell(x;\tau)\equiv (x-\tau)_+^\ell$, it holds that $\int_0^x g^+_\ell(u;\tau)\ \d u = \int_\tau^{x \vee \tau} (u-\tau)^\ell\ \d{u} = \int_0^{x \vee \tau-\tau} u^\ell\ \d{u}= g^+_{\ell+1}(x;\tau)/(\ell+1)$. This entails that 
\begin{align*}
(I^d_{r_0,\ldots,r_{d-1};0}f_\circ)(x) = \sum_{j=1}^{j^*}(-1)^d\frac{\td{a}_j}{d!}(\tau_j - x)_+^d + \sum_{j=j^*}^{k-1}\frac{\td{b}_j}{d!}(x-\tau_j)_+^d + P_{d-1}(x),
\end{align*}
where $P_{d-1}(x)$ is some polynomial of order $d-1$. The proof is then complete by noting that $\{(-1)^d\td{a}_j/d!\}_{j=1}^{j^*}$ has sign $(-1)^{d+1}$ and $\{\td{b}_j/d!\}_{j=j^*}^{k-1}$ is non-negative.
\end{proof}

\subsection{Proof of Lemma \ref{lemma:shape_coef}}

We need the following simple fact that translates the $\ell_2$ constraint on $\theta^\ast$ at the sequence level to an integral $L_2$ constraint on $f^\ast$ at the underlying function level. Its proof can be found after the proof of Lemma \ref{lemma:shape_coef}.

\begin{lemma}\label{lemma:seq2func}
Let $f^*\in \cal{F}_n^*(d,k)$ and $(\theta^*)_i \equiv (f^*(i/n))_i$. Then, if $\|\theta^*\|^2\leq 1$, there exists some $c = c(d)$ such that $1\geq c\cdot n\int_0^1 (f^*)^2(x)\ \d{x}$. Actually, this inequality holds for the larger unshaped spline space $\cal{F}_n(d,d_0,k)$.
\end{lemma}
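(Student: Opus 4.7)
The plan is to reduce the claim to a uniform Marcinkiewicz--Zygmund-type inequality on each polynomial piece of $f^*$, comparing the continuous $L^2$ norm on an interval with the discrete $\ell^2$ norm at equispaced nodes. Both sides of the target bound decompose additively over the pieces $(n_j/n, n_{j+1}/n]$ generating $f^*$, and on each nonempty piece the definition of $\cal{F}_n(d,d_0,k)$ forces $m_j \equiv n_{j+1}-n_j \geq d+1$, so it suffices to prove the corresponding single-piece inequality.

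Concretely, on a fixed nonempty piece I would apply the affine change of variables $x = (n_j + m_j s)/n$ with $s\in[0,1]$. Writing $q_j(s) \equiv f^*((n_j+m_j s)/n)$, a polynomial of degree at most $d$ on $[0,1]$, a direct computation yields
\[
\int_{n_j/n}^{n_{j+1}/n} (f^*)^2\,\d{x} = \frac{m_j}{n}\int_0^1 q_j(s)^2\,\d{s}, \qquad \sum_{i\in(n_j;n_{j+1}]} f^*(i/n)^2 = \sum_{i=1}^{m_j} q_j(i/m_j)^2.
\]
Thus the lemma follows once I establish
\begin{equation}\label{eq:proposal-MZ}
m\int_0^1 q(s)^2\,\d{s} \leq C_d\sum_{i=1}^{m} q(i/m)^2
\end{equation}
uniformly over polynomials $q$ of degree at most $d$ and integers $m\geq d+1$, with $C_d$ depending only on $d$; summing over $j$, multiplying by $n$ and using $\|\theta^*\|^2\leq 1$ then gives $n\int_0^1 (f^*)^2 \leq C_d$, i.e.\ the claim with $c=1/C_d$.

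To prove \eqref{eq:proposal-MZ}, I would expand $q(s)=\sum_{\ell=0}^{d} c_\ell s^\ell$ and observe that both sides are quadratic forms in $c\in\RR^{d+1}$: $\int_0^1 q^2 = c^\top G c$ with the positive-definite Hilbert matrix $G_{jk}=1/(j+k+1)$, and $(1/m)\sum_{i=1}^{m}q(i/m)^2 = c^\top H_m c$ with $(H_m)_{jk}= m^{-1}\sum_{i=1}^{m}(i/m)^{j+k}$. Each entry of $H_m$ is a Riemann sum for the corresponding entry of $G$, so $H_m\to G$ entrywise and hence in operator norm on the fixed $(d+1)$-dimensional space. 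Consequently there exists $m_0=m_0(d)$ with $H_m \succeq G/2$ for all $m\geq m_0$. In the remaining finite range $d+1\leq m<m_0$, each $H_m$ is positive definite because evaluation at $m\geq d+1$ distinct nodes is injective on the space of degree-$d$ polynomials, so $H_m \succeq c_m G$ for some $c_m>0$; the minimum of these finitely many constants together with $1/2$ yields $C_d$.

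The main (and only) subtlety is securing a constant $C_d$ in \eqref{eq:proposal-MZ} that does not depend on the piece length $m$, and this is precisely what the split into a convergent ``large-$m$'' regime and a finite ``small-$m$'' regime above delivers. Since no shape information enters the argument, the same bound applies verbatim to the larger unshaped class $\cal{F}_n(d,d_0,k)$, giving the final remark in the lemma statement.
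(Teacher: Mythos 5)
Your proof is correct, and it is arguably cleaner than the paper's. You and the paper both reduce to a per-piece comparison between the continuous $L_2$ norm and the discrete $\ell_2$ norm of a degree-$\leq d$ polynomial, exploiting the defining gap condition $n_{j+1}-n_j\geq d+1$. The difference is how that per-piece inequality is proved. The paper first applies Cauchy--Schwarz to reduce $\int P^2$ to the diagonal terms $\sum_\ell c_\ell^2\int x^{2\ell}$, dominates each monomial integral by a Riemann sum, and then calls Lemma~\ref{lemma:diagonal} to re-assemble the diagonal bound into the discrete quadratic form; Lemma~\ref{lemma:diagonal} itself is proved by showing $\lambda_{\min}$ of the discrete moment matrix is bounded away from zero uniformly in $m\geq d+1$, via the Hilbert-matrix limit plus a finite check. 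You bypass the coefficient decomposition entirely: after the affine rescaling to $[0,1]$, both sides are quadratic forms $c^\top G c$ and $c^\top H_m c$ in the same coefficient vector, and you prove the direct matrix comparison $G\preceq C_d H_m$ by the same two-regime argument (entrywise Riemann-sum convergence $H_m\to G$ for large $m$, Vandermonde injectivity for the finitely many small $m$). The underlying linear-algebra fact is the same one the paper exploits inside Lemma~\ref{lemma:diagonal}, but your route avoids the lossy Cauchy--Schwarz step, avoids the detour through $\lambda_{\min}(H_m)\geq c$ followed by $G\preceq\lambda_{\max}(G)I$, and makes the uniformity in $m$ completely transparent; it also sidesteps the canonical parametrization from Lemma~\ref{lemma:shape_para}, which the paper uses in its write-up but which is not actually needed (as the paper itself concedes by asserting the result for the unshaped class). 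Your final observation that no shape information enters is correct, which is exactly why the bound extends to $\cal{F}_n(d,d_0,k)$.
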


\begin{proof}[Proof of Lemma \ref{lemma:shape_coef}]
Fix any $\theta\in\Theta^*(d,k)$ and its generating spline $f\in\cal{F}_n^\ast(d,k)$. Then, under the condition $\|\theta\|^2\leq 1$, Lemma \ref{lemma:seq2func} entails that there exists some $K = K(d) > 0$ such that $\int_0^1 f^2(x)\ \d{x} \leq K/n$. Due to scale invariance, it suffices to prove that $|c_\ell(f)|\leq C$ for $\ell\in[0;d-1]$ for some $C = C(d)$ under the condition $\pnorm{f}{2}^2=\int_0^1 f^2(x)\ \d{x} \leq 1$.

For $f \in \mathcal{F}_n^\ast(d,k)$, let $\{n_j=n_j(f)\}_{j\in[0;k]}$ be its knots and $j^*=j^\ast(f)$ be as in its canonical form in Lemma \ref{lemma:shape_para}. Let $
\tau_j\equiv\tau_j(f)\equiv n_j(f)/n$ for $j\in[1;k]$ and $\tau^*\equiv\tau^\ast(f)\equiv \tau_{j^*(f)}(f)$. We will prove that for some $K = K(d)>0$, 
\begin{align*}
\int_0^1 f^2(x)\ \d{x} \geq K\cdot \max_{0\leq \ell \leq d-1} c_\ell^2(f),\quad \text{for any } f \in \mathcal{F}_n^\ast(d,k).
\end{align*}
We focus on the case $\tau^\ast(f) \in [0,1/2]$ and prove that
\begin{align*}
\int_{\tau^*(f)}^{1} f^2(x)\ \d{x} \geq K\cdot \max_{0\leq \ell \leq d-1} c_\ell^2(f),\quad \text{for any } f \in \mathcal{F}_n^\ast(d,k).
\end{align*}
We present the proof for $c_{d-1}(f)$ whenever $c_{d-1}(f)\neq 0$; the bounds for $\{c_\ell(f)\}_{\ell\in[0;d-2]}$ follow from completely analogous arguments. Below we omit notational dependence on $f$ if no confusion could arise. On $[\tau^*,1]$, $f$ has the canonical form
\begin{align*}
f(x) = \sum_{j=j^*}^{k-1}b_j(x-\tau_j)_+^d + \sum_{\ell=0}^{d-1}\frac{c_\ell}{\ell!}x^\ell.    
\end{align*}
This can be alternatively parametrized as $f(x) = \sum_{\ell = 0}^{d-1}c_\ell x^\ell/\ell! + (I^d_{0,\ldots,0;\tau^*(f)}f_\circ)(x)$, where $f_\circ(x)\equiv \sum_{j=j^*}^{k-1}(b_j\cdot d!)\bm{1}_{x>\tau_j} \in \cal{F}_n^*(0,k)$, and $\tau^*(f) = \tau^*(f_\circ)$. Therefore, we have
\begin{align*}
1&\geq \int_{\tau^*(f)}^1 \Big(\sum_{\ell = 0}^{d-1}c_\ell x^\ell/\ell! + (I^d_{0,\ldots,0;\tau^*(f)}f_\circ)(x)\Big)^2\ \d x\\
& = c^2_{d-1} \int_{\tau^*(f_\circ)}^1 \bigg[\sum_{\ell = 0}^{d-2}\frac{c_\ell}{|c_{d-1}|}  \frac{x^\ell}{\ell!} + \mathrm{sgn}(c_{d-1}) \frac{x^{d-1}}{(d-1)!}+ \Big(I^d_{0,\ldots,0;\tau^*(f_\circ)}\frac{f_\circ}{|c_{d-1}|}\Big)(x)\bigg]^2\ \d x\\
&\geq  c_{d-1}^2 \inf_{ \substack{c_0',\ldots,c_{d-2}'\in \R,c_{d-1}^\prime\in\{\pm1\} \\ \td{f}_\circ \in \cup_n\mathcal{F}_n^\ast(0,k), \tau^\ast(\td{f}_\circ)\leq 1/2 } } \int_{\tau^*( \td{f}_\circ )}^1 \bigg[\sum_{\ell=0}^{d-1} \frac{c_\ell' x^\ell}{\ell!}+ (I_{0,\ldots,0;\tau^\ast(\td{f}_\circ)}^d \td{f}_\circ)(x)\bigg]^2\ \d{x}   \\
&=  c_{d-1}^2 \inf_{ \substack{c_0'',\ldots,c_{d-2}'' \in \R, c_{d-1}^{\prime\prime}\in\{\pm1\}\\ \td{f}_\circ \in \cup_n\mathcal{F}_n^\ast(0,k), \tau^\ast(\td{f}_\circ)\leq 1/2 } } \int_{\tau^*( \td{f}_\circ )}^1 \bigg[\sum_{\ell=0}^{d-1} \frac{c_\ell'' (x-\tau^*(\td{f}_\circ))^\ell}{\ell!}+ (I_{0,\ldots,0;\tau^\ast(\td{f}_\circ)}^d \td{f}_\circ)(x)\bigg]^2\ \d{x},
\end{align*}
where in the third line we use the fact that $\td{f}_\circ = f_\circ/|c_{d-1}|\in \cal{F}^*_n(0,k)$ and satisfies $\tau^*(\td{f}_\circ)=\tau^*(f_\circ)\leq 1/2$. Thus, to prove the desired result, it suffices to show that there exists some $K=K(d)>0$ such that
\begin{align}\label{ineq:lower_bound_cl}
\inf_{ \substack{\td{c}_0,\ldots,\td{c}_{d-2} \in \R, \td{c}_{d-1}\in\{\pm1\}\\ \td{f}_\circ \in \cup_n\mathcal{F}_n^\ast(0,k), \tau^\ast(\td{f}_\circ)\leq 1/2, k \in \mathbb{Z}_+ } } \int_{\tau^*(\td{f}_\circ)}^1 \bigg[\sum_{\ell=0}^{d-1} \frac{\td{c}_\ell (x-\tau^*(\td{f}_\circ))^\ell}{\ell!}+ (I_{0,\ldots,0;\tau^*(\td{f}_\circ)}^d \td{f}_\circ)(x)\bigg]^2\ \d{x} \geq K.
\end{align}
Suppose this is not true, then there exist a function sequence $\{\td{f}_{n,\circ}\}_n\subset \cup_{n^\prime,k^\prime}\cal{F}^*_{n'}(0,k^\prime)$ with $\tau_n^* \equiv \tau_n^*(\td{f}_{n,\circ})\subset[0,1/2]$ and real sequences $\{\td{c}_{n,\ell}\}_{n,\ell}$ with $\td{c}_{n,d-1}\in\{\pm1\}$, such that
\begin{align*}
\int_0^1 \bm{1}_{[\tau^*_n,1]}(x)\Big[\sum_{\ell = 0}^{d-1}\frac{\td{c}_{n,\ell}}{\ell!} (x-\tau_n^*)^\ell + (I^d_{0,\ldots,0;\tau_n^*}\td{f}_{n,\circ})(x)\Big]^2\ \d{x}\rightarrow 0.
\end{align*}
Since $L_2$ convergence implies almost everywhere (a.e.) convergence, it follows that
\begin{align*}
\bm{1}_{[\tau^*_n,1]}(x)\cdot\Big[\sum_{\ell = 0}^{d-1}\td{c}_{n,\ell} (x-\tau_n^*)^\ell/\ell! + (I^d_{0,\ldots,0;\tau_n^*}\td{f}_{n,\circ})(x)\Big] \rightarrow 0, \quad \text{a.e. on }[0,1].
\end{align*}
Since the sequence $\{\tau_n^*\}\subset[0,1/2]$ is bounded, $\tau_n^*\rightarrow\tau^*$ along some subsequence for some $\tau^*\in[0,1/2]$, and we work with this subsequence below. As  $\bm{1}_{[\tau^*_n,1]}(x) \to 1$ for any fixed $x\in(\tau^*,1]$, the sequence of functions in the brackets in the above display converges a.e. to $0$ on $(\tau^\ast,1]$. In other words,
\begin{align}\label{eq:ae}
\sum_{\ell = 0}^{d-1}\td{c}_{n,\ell} (x-\tau_n^*)^\ell/\ell! + (I^d_{0,\ldots,0;\tau_n^*}\td{f}_{n,\circ})(x)\rightarrow 0, \quad \text{a.e. on }(\tau^*,1].
\end{align}
We first prove that under \eqref{eq:ae}, $\{\td{c}_{n,\ell}\}_n$ is necessarily bounded for each $\ell\in[0;d-1]$. Since $\{\td{c}_{n,d-1}\}_n\subset\{-1,+1\}$ is already bounded, it suffices to prove the claim for $\ell\in[0;d-2]$. If this is not the case, then there exists some nonempty subset $\cal{L}\subset[0;d-2]$ such that for every $\ell\in\cal{L}$, $\{\td{c}_{n,\ell}\}_n$ is divergent, i.e., $\limsup_n |\td{c}_{n,\ell}| = +\infty$. As $\tau^*_n\rightarrow\tau^*$, we may find some slowly decaying $\epsilon_n\downarrow 0$ such that (i) $\varepsilon_n > (\tau^* - \tau_n^*)_+$, (ii) $\{\td{c}_{n,\ell}\varepsilon_n^\ell\}_n$ is still divergent for every $\ell\in\cal{L}$, and (\ref{eq:ae}) holds with $x_n \equiv \tau_n^* +\varepsilon_n > \tau^*$. Now, by definition of $\td{f}_{n,\circ}(\cdot)$, there exist some $k_n$, $j_n^*\in[1;k_n]$, $0\equiv \tau_{n,0}\leq \ldots\leq \tau_{n,k_n}\equiv 1$, and non-negative sequence $\{\mu_{n,j}\}_{j=j_n^*}^{k_n-1}$ such that $\tau_n^*\equiv \tau_{n,j^*_n} \leq 1/2$ and for $x\in[\tau_n^*,1]$, $\td{f}_{n,\circ}(x) = \sum_{j=j_n^*}^{k_n-1}\mu_{n,j}\bm{1}_{x > \tau_{n,j}}$. Thus by a direct calculation, we have for $x \in [\tau_n^*,1]$
\begin{align*}
(I^d_{0,\ldots,0;\tau_n^*}\td{f}_{n,\circ})(x) = \sum_{j=j_n^*}^{k_n-1}\frac{\mu_{n,j}}{d!}\big(x-\tau_n^*\big)_+^d.
\end{align*}
So by \eqref{eq:ae} and definition of $\{x_n\}$, 
\begin{align*}
\sum_{\ell = 0}^{d-1}\frac{\td{c}_{n,\ell}}{\ell!}\varepsilon_n^\ell + \sum_{j=j_n^*}^{k_n-1}\frac{\mu_{n,j}}{d!}\varepsilon_n^d\rightarrow 0.
\end{align*} 
Let $\ell_0\in\cal{L}$ be the index such that $\{\td{c}_{n,\ell}\varepsilon_n^\ell\}_n$ has the fastest divergence rate, i.e., $\limsup_n |\td{c}_{n,\ell_0}|\varepsilon_n^{\ell_0}/(|\td{c}_{n,\ell}|\varepsilon_n^\ell)\geq \alpha$ for some positive $\alpha$ and every $\ell\in\cal{L}$. Without loss of generality, we further choose $\{\varepsilon_n\}$ such that the maximal divergence rate and the index that achieves this rate are unique, i.e., $\ell_0$ is unique and satisfies $\limsup_n |\td{c}_{n,\ell_0}|\varepsilon_n^{\ell_0}/(|\td{c}_{n,\ell}|\varepsilon_n^\ell)= \infty$ for every $\ell\in \cal{L}\setminus \{\ell_0\}$. This then entails that
\begin{align}\label{eq:Bn}
B_n\equiv \sum_{j=j_n^*}^{k_n-1}\frac{\mu_{n,j}}{d!}\varepsilon_n^d \gtrsim |\td{c}_{n,\ell_0}|\varepsilon_n^{\ell_0}
\end{align}
and is positive and divergent. Next, for the chosen sequence $\{\varepsilon_n\}$, choose $\{\eta_n\}\subset[1,\infty)$ as some slowly growing sequence such that \eqref{eq:ae} holds with the sequence $x_n^\prime \equiv \tau_n^* + \varepsilon_n\eta_n\geq \tau_n^* + \varepsilon_n>\tau^*$, i.e.,  
\begin{align}\label{ineq:eta_n_conv}
\sum_{\ell = 0}^{d-1}\frac{\td{c}_{n,\ell}}{\ell!}(\eta_n\varepsilon_n)^\ell + \sum_{j=j_n^*}^{k_n-1}\frac{\mu_{n,j}}{d!}(\eta_n\varepsilon_n)^d \rightarrow 0,
\end{align}
and that $\{\varepsilon_n\eta_n\}\downarrow 0$ and $\{\td{c}_{n,\ell_0}(\eta_n\varepsilon_n)^{\ell_0}\}$ remains to be the fastest divergent sequence among $\cal{L}$, i.e., $\limsup_n |\td{c}_{n,\ell_0}|(\varepsilon_n\eta_n)^{\ell_0}/(|\td{c}_{n,\ell}|(\varepsilon_n\eta_n)^\ell)= \infty$ for every $\ell\in \cal{L}\setminus \{\ell_0\}$. Similar to \eqref{eq:Bn}, we have $\sum_{j=j_n^*}^{k_n-1}\mu_{n,j}(\eta_n\varepsilon_n)^d/d!\gtrsim |\td{c}_{n,\ell_0}|(\eta_n\varepsilon_n)^{\ell_0}$ and is positive and divergent. But this is impossible since
\begin{align*}
\sum_{j=j_n^*}^{k_n-1}\frac{\mu_{n,j}}{d!}(\eta_n\varepsilon_n)^d
& = (\eta_n)^dB_n \gtrsim (\eta_n)^{d-\ell_0}(\td{c}_{n,\ell_0}(\varepsilon_n\eta_n)^{\ell_0})\\
 & \asymp (\eta_n)^{d-\ell_0}\biggabs{\sum_{\ell = 0}^{d-1}\td{c}_{n,\ell}(\eta_n\varepsilon_n)^\ell/\ell!},
\end{align*}
where the first inequality is by \eqref{eq:Bn} and the last relation is by the maximal divergence rate of $\{\td{c}_{n,\ell_0}(\eta_n\varepsilon_n)^{\ell_0}\}$, and thus
\begin{align*}
&\sum_{\ell = 0}^{d-1}\frac{\td{c}_{n,\ell}}{\ell!}(\eta_n\varepsilon_n)^\ell + \sum_{j=j^*}^{k_n-1}\frac{\mu_{n,j}}{d!}(\eta_n\varepsilon_n)^d \\
& \gtrsim \big[(\eta_n)^{d-\ell_0}-1\big]\biggabs{\sum_{\ell = 0}^{d-1}\td{c}_{n,\ell}(\eta_n\varepsilon_n)^\ell/\ell!} \geq \big[\eta_n-1\big]\biggabs{\sum_{\ell = 0}^{d-1}\td{c}_{n,\ell}(\eta_n\varepsilon_n)^\ell/\ell!}  \to \infty,
\end{align*}
a contradiction to (\ref{ineq:eta_n_conv}). This concludes that $\{\td{c}_{n,\ell}\}_{n}$ are necessarily bounded for every $\ell\in[0;d-1]$. Thus there exists a real sequence $\{c_\ell^*\}_{\ell=0}^{d-1}$ with $c_{d-1}^* \in \{\pm1\}$ such that $ \td{c}_{n,\ell}\rightarrow c^*_\ell$ along some subsequence for each $\ell\in[0;d-1]$. Coming back to \eqref{eq:ae} and noting that $\tau_n^*\rightarrow\tau^*$ along some subsequence, we then conclude that
\begin{align}\label{eq:h_conv}
h_n(x)\equiv (I^d_{0,\ldots,0;\tau_n^*}\td{f}_{n,\circ})(x) \rightarrow \sum_{\ell=0}^{d-1}\frac{-c_\ell^*}{\ell!}(x-\tau^*)^\ell\equiv h^\ast(x)
\end{align}
a.e. on $(\tau^*,1]$ as $n \to \infty$. We will now prove that $\{c_\ell^*\}_{\ell=0}^{d-1}$ are necessarily non-positive. Fix some positive integer $m> d$ and define a regular grid on $(\tau^*,1]$: $t_i \equiv \tau^* + i(1-\tau^*)/m$ for $i\in[0;m]$. Without loss of generality, assume that $\{t_i\}_{i=1}^m$ belongs to the set with full Lebesgue measure such that \eqref{eq:h_conv} holds. Define $(\xi_{n,i})_{i=1}^m\equiv (h_n(t_i))_{i=1}^m$ (resp. $(\xi_{i}^\ast)_{i=1}^m\equiv (h^\ast(t_i))_{i=1}^m$) to be the realization of $h_n(\cdot)$ (resp. $h^\ast(\cdot))$ on this grid. Define $\nabla$ to be the finite difference operator that maps $(y_1,\ldots,y_m)^\top\in\RR^m$ to $(y_2-y_1, \ldots, y_m - y_{m-1})^\top \in\RR^{m-1}$. Then, since $\lim_n \min_{\ell \in [0;d]} h_n^{(\ell)}(x)\geq 0$ for $x \in (\tau^*,1]$, it holds that for each fixed $m\geq d+1$, $\nabla^{\ell}\xi_n \in \RR_{\geq 0}^{m-\ell}$ holds for all $\ell \in [0;d]$ for $n$ large enough. On the other hand, for each $\ell \in [0;d-1]$ and $p\in[\ell;d-1]$ , there exists some positive constant $L_{p,\ell}>0$ for such that
\begin{align*}
\big(\nabla^{\ell}\xi^\ast\big)_1 &= \bigg(\nabla^{\ell} \bigg( \sum_{p=0}^{d-1} \frac{-c_{p}^*}{p!}(t_j-\tau^*)^{p}\bigg)_{j=1}^m\bigg)_1 = \sum_{p=\ell}^{d-1} -c_p^\ast L_{p,\ell} ((1-\tau^*)/m)^p.
\end{align*}
Since for each fixed $m\geq d+1$, $\nabla^{\ell}\xi_n \rightarrow \nabla^{\ell} \xi^\ast$ as $n\rightarrow\infty$ by \eqref{eq:h_conv} and $\nabla^{\ell}\xi_n \in \RR_{\geq 0}^{m-\ell}$ for $n$ large enough, it holds that $\big(\nabla^{\ell}\xi^\ast\big)_1\geq 0$ for each fixed $m\geq d+1$. Multiplying by $m^\ell$ on both sides of the above equation and letting $m \to \infty$ we conclude that $c_\ell^\ast \leq 0$ for $\ell \in [0;d-2]$ and $c_{d-1}^* = -1$.

With $\{c_\ell^*\}_{\ell=0}^{d-1}\in\RR^d_{\leq 0}$, $h_n,h^\ast$ have the property that their derivatives up to order $d - 1$ are all convex functions, so on arbitrary compact interval contained in $(\tau^*,1)$, $D^{(d-1)}h_n$ converges uniformly to $D^{(d-1)}h^\ast\equiv 1$  (cf. Theorem 25.7 of
\cite{rockafellar1997convex} and the remark after its proof). This cannot happen as $D^{(d-1)}h_n(\tau^*_n)= 0$, $\tau^\ast_n \to \tau^\ast$ and $D^{(d-1)}h_n$ is convex. We have therefore established the contradiction and proved (\ref{ineq:lower_bound_cl}).
\end{proof}

\begin{proof}[Proof of Lemma \ref{lemma:seq2func}]
By Lemma \ref{lemma:shape_para}, any $f\in \cal{F}_n^*(d,k)$ has the canonical parametrization
\begin{align*}
f(x) = \sum_{j=1}^{j^*}a_j(\tau_j - x)_+^d + \sum_{j=j^*}^{k-1}b_j(x-\tau_j)_+^d + \sum_{\ell=0}^{d-1}c_\ell x^\ell,
\end{align*}
where $\{\tau_j\}_{j=1}^{k-1} \equiv \{n_j/n\}_{j=1}^{k-1}\subset[0,1]$. Let $\tau^*\equiv \tau_{j^*}$. Then, it holds that $\int_0^1 f^2(x)\ \d x = (I) + (II)$, where
\begin{align*}
(I) \equiv \int_0^{\tau^*} \Big(\sum_{j=1}^{j^*}a_j(\tau_j - x)_+^d + \sum_{\ell=0}^{d-1}c_\ell x^\ell\Big)^2\ \d x,\\
(II) \equiv \int_{\tau^*}^1 \Big(\sum_{j=j^*}^{k-1}b_j(x-\tau_j)_+^d+\sum_{\ell=0}^{d-1}c_\ell x^\ell\Big)^2\ \d x.
\end{align*}
We now upper bound $(II)$ by its sequence counterpart; the bound for $(I)$ is similar. Since
\begin{align*}
(II) &= \sum_{m = j^*}^{k-1}\int_{\tau_m}^{\tau_{m+1}} \Big(\sum_{j=j^*}^{k-1}b_j(x-\tau_j)_+^d+\sum_{\ell=0}^{d-1}c_\ell x^\ell\Big)^2\ \d x\\
&= \sum_{m = j^*}^{k-1}\int_{\tau_m}^{\tau_{m+1}} \Big(\sum_{j=j^*}^mb_j(x-\tau_j)^d+\sum_{\ell=0}^{d-1}c_\ell x^\ell\Big)^2\ \d x,
\end{align*}
we may bound the integral piece by piece. More generally, we show that there exists some $K=K(d)>0$ such that for any $a,b\in[0;n]$ with $b-a\geq d+1$ and $d$-degree polynomial $P(x)\equiv \sum_{\ell=0}^d c_\ell x^\ell$,
\begin{align}\label{eq:l2}
 \int_{a/n}^{b/n} P^2(x)\ \d{x}\leq K\cdot n^{-1}\sum_{i\in(a;b]}P^2(i/n).
\end{align}
The above display holds because
\begin{align*}
&\int_{a/n}^{b/n} P^2(x)\ \d{x} = \int_{a/n}^{b/n}\Big(\sum_{\ell=0}^d c_\ell x^\ell\Big)^2\ \d{x} \lesssim_d \sum_{\ell=0}^dc_\ell^2\cdot \int_{a/n}^{b/n} x^{2\ell}\ \d{x}\\
&\leq \sum_{\ell=0}^d \frac{c_\ell^2}{n}\sum_{i\in(a;b]}\Big(\frac{i}{n}\Big)^{2\ell} = \frac{1}{n}\sum_{i\in(a;b]}\sum_{\ell=0}^d \bigg(c_\ell\bigg(\frac{i}{n}\bigg)^\ell\bigg)^2 \lesssim_d \frac{1}{n}\sum_{i\in(a;b]}\bigg(\sum_{\ell=0}^d  c_\ell\bigg(\frac{i}{n}\bigg)^\ell\bigg)^2,
\end{align*}
where the last inequality is due to Lemma \ref{lemma:diagonal} and the condition $b-a\geq (d+1)$. Then for every $\theta\in\Theta(d,d_0,k)$ with unit norm constraint and the corresponding $f\in\cal{F}_n(d,d_0,k)$, by (\ref{eq:l2}) we have
\begin{align*}
1&\geq \|\theta\|^2 \geq \|\theta\|^2_{(n_{j^*};n]} = \sum_{m=j^*}^{k-1}\|\theta\|^2_{(n_m;n_{m+1}]} = \sum_{m=j^*}^{k-1} \sum_{i\in(n_m;n_{m+1}]} f^2(i/n)\\
&\gtrsim n\sum_{m=j^*}^{k-1}\int_{\tau_m}^{\tau_{m+1}} f^2(x)\ \d x = n \int_{\tau^*}^1 f^2(x)\ \d x.
\end{align*}
The bound for $(II)$ is thus complete. 
\end{proof}

\section{Auxiliary lemmas}\label{sec:auxiliary}
\begin{lemma}\label{lemma:diagonal}
Fix any positive integer $d$. There exists some $c = c(d)$ such that for any integers $n\geq 0$, $m\geq d+1$, and real sequence $\{a_\ell\}_{\ell=1}^{d+1}$, 
\begin{align*}
\sum_{i=1}^m \bigg[a_1 + a_2\bigg(\frac{i}{n}\bigg) +\ldots + a_{d+1}\bigg(\frac{i}{n}\bigg)^d\bigg]^2 \geq c\sum_{\ell=1}^{d+1} a_\ell^2\frac{m^{2\ell-1}}{n^{2(\ell-1)}}.
\end{align*}
\end{lemma}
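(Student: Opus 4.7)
The plan is to reduce the inequality to a $n$-free Vandermonde-type statement via rescaling, and then analyze the resulting Gram matrix by comparing it to the Hilbert matrix in the large-$m$ regime and exploiting full column rank in the finitely many small-$m$ cases.

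\textbf{Step 1 (rescaling to eliminate $n$).} Define $\td{a}_\ell \equiv a_\ell (m/n)^{\ell-1}$ for $\ell \in [1;d+1]$. Then $a_1 + a_2(i/n) + \cdots + a_{d+1}(i/n)^d = \sum_{\ell=1}^{d+1} \td{a}_\ell (i/m)^{\ell-1}$, and the target right-hand side equals $c \cdot m \cdot \sum_\ell \td{a}_\ell^2$. Thus the statement is equivalent to showing that there exists $c=c(d)>0$ such that, for every integer $m \ge d+1$ and every $\td{a} \in \mathbb{R}^{d+1}$,
\begin{equation*}
\sum_{i=1}^m \Big(\sum_{\ell=1}^{d+1} \td{a}_\ell (i/m)^{\ell-1}\Big)^2 \;\geq\; c\, m\, \|\td{a}\|^2.
\end{equation*}

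\textbf{Step 2 (matrix formulation and convergence to the Hilbert matrix).} Let $A^{(m)} \in \mathbb{R}^{m \times (d+1)}$ have entries $A^{(m)}_{i,\ell} = (i/m)^{\ell-1}$; the claim becomes $\lambda_{\min}\big(m^{-1}(A^{(m)})^\top A^{(m)}\big) \geq c(d)$. The $(j,k)$-entry of this matrix equals $m^{-1}\sum_{i=1}^m (i/m)^{j+k-2}$, a Riemann sum converging to $\int_0^1 x^{j+k-2}\,\mathrm{d}x = 1/(j+k-1) = H_{jk}$, the standard $(d+1)\times(d+1)$ Hilbert matrix, which is well known to be positive definite. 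Since this is entrywise convergence in a finite-dimensional space, there exists $m_0 = m_0(d)$ such that $\|m^{-1}(A^{(m)})^\top A^{(m)} - H\|_{\mathrm{op}} \le \lambda_{\min}(H)/2$ for all $m \geq m_0$. Weyl's inequality then yields $\lambda_{\min}\big(m^{-1}(A^{(m)})^\top A^{(m)}\big) \ge \lambda_{\min}(H)/2$ for all $m \ge m_0$.

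\textbf{Step 3 (finitely many small-$m$ cases).} For $m \in [d+1; m_0-1]$, the matrix $A^{(m)}$ has $m \ge d+1$ distinct row nodes $\{i/m\}_{i=1}^m$, so any $(d+1)\times (d+1)$ sub-Vandermonde is invertible and $A^{(m)}$ has full column rank. Hence $c_m \equiv \lambda_{\min}\big(m^{-1}(A^{(m)})^\top A^{(m)}\big) > 0$ for each such $m$. Taking $c(d) \equiv \min\big\{\lambda_{\min}(H)/2,\, \min_{d+1 \le m \le m_0-1} c_m\big\} > 0$ completes the proof.

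\textbf{Main obstacle.} The only subtlety is ensuring that $c$ depends solely on $d$ and not on $m$; the Hilbert-matrix limit argument handles all sufficiently large $m$ uniformly, while the bounded-$m$ case is disposed of by a finite minimum. No delicate analysis of the convergence rate to the Hilbert matrix is needed, only entrywise convergence.
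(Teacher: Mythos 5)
Your proof is correct, and it is in fact cleaner than the paper's. The paper also rewrites the left-hand side as $x^\top A x$ with $x_\ell = a_\ell (m/n)^{\ell-1} m^{1/2}$ (so $\|x\|^2$ equals the target right-hand side, and $A$ is exactly your $m^{-1}(A^{(m)})^\top A^{(m)}$), and it also ultimately bounds $\lambda_{\min}(A)$ from below by the Portmanteau convergence $A \to \bar A$ to the moment matrix of $U\sim\mathrm{Unif}[0,1]$ (the Hilbert matrix), combined with a finite minimum over $m \in [d+1; N]$. The difference is that the paper takes an unnecessary detour: it proves positive definiteness of all $(-i,-i)$-minors, then for each coordinate $j$ uses the Schur complement $A_{22}-A_{21}A_{11}^{-1}A_{12}$ to establish per-coordinate bounds $x^\top Ax \gtrsim x_j^2$, which requires showing $\lambda_{\min}(A)\geq \frac{\varepsilon}{1+\varepsilon}\max_j A_{jj}$ and a separate bound $\max_j A_{jj}\leq 2^{2d+1}$. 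You observe that once $\lambda_{\min}(A)\geq c^*(d)$ is known, the lemma follows immediately from $x^\top Ax \geq \lambda_{\min}(A)\|x\|^2$, with no need for minors, block inverses, or diagonal estimates. Your route buys a substantially shorter argument with the identical essential ingredients (Riemann-sum convergence to the Hilbert matrix, Weyl's inequality, Vandermonde full rank for the bounded range of $m$); the paper's route yields per-coordinate bounds which, as far as one can tell from its other uses of the lemma, are never actually needed.
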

\begin{proof}[Proof of Lemma \ref{lemma:diagonal}]
As the left hand side of the above inequality equals 
\begin{align*}
&\sum_{i=1}^n \bigg(\sum_{\ell=1}^{d+1} a_{\ell}(i/n)^{\ell-1}\bigg)^2 = \sum_{1\leq \ell,\ell'\leq d+1} a_\ell a_{\ell'} \sum_{i=1}^m (i/n)^{\ell+\ell'-2} \\
&= \sum_{1\leq \ell,\ell'\leq d+1} a_\ell (m/n)^{\ell-1} m^{1/2}\cdot  a_{\ell'} (m/n)^{\ell'-1} m^{1/2}\cdot \bigg[ m^{-(\ell+\ell'-1)} \sum_{i=1}^m i^{\ell+\ell'-2} \bigg],
\end{align*}
using matrix notation, it can be written as $x^\top A x$, where $x\equiv (a_\ell (m/n)^{\ell-1} m^{1/2})_{\ell=1}^{d+1}\in\RR^{d+1}$, and the matrix $(A)_{ij}\equiv (A(m,d))_{ij}\equiv (m^{-(i+j-1)}\sum_{k=1}^mk^{i+j-2})_{ij}\in\RR^{(d+1)\times(d+1)}$.

We first show that $A$ is strictly positive-definite for the fixed $d$ and any $m\geq d+1$. Note that $A$ is actually a moment matrix and can be written as $A_{ij} = \E(X^{i-1}\cdot X^{j-1})$, where $X$ is uniformly distributed on the set $\{1/m,\ldots,m/m\}$. Therefore, for any $c\in\mathbb{S}^d$, writing, with a slight abuse of notation, $Z\equiv \sum_{i=1}^{d+1} c_iX^{i-1}$, it holds that
\begin{align*}
c^\top Ac &= \sum_{1\leq i,j\leq d+1} c_ic_jA_{ij} = \sum_{1\leq i,j\leq d+1} c_ic_j\E(X^{i-1}\cdot X^{j-1}) = \E\bigg(\sum_{i=1}^{d+1}c_iX^{i-1}\bigg)^2 \\
&=\E Z^2 = (\E Z)^2 + \var(Z).
\end{align*}
If $\var(Z) = 0$, then $Z\equiv \alpha$ almost surely for some constant $\alpha$, which is equivalent to that the polynomial
\begin{align*}
T(x)\equiv (c_0 - \alpha) + c_1x + \ldots + c_{d+1}x^{d}
\end{align*}
having distinct roots $\{1/m,\ldots,m/m\}$. If $c_1=\ldots = c_{d+1} = 0$, then $c_0 = \pm1$ since $\|c\| = 1$, which implies that $Z= \pm1$, and thus $c^\top Ac \geq (\E Z)^2 = 1$. Otherwise, we have $c_i \neq 0$ for some $i\in[1;d]$, and hence $T(x)$ is not a constant and thus has at most $d$ roots, which contradicts the condition that $m \geq d+1$. So we conclude that $c^\top A c > 0$ for any $c\in\mathbb{S}^d$ and thus $A$ is strictly positive-definite. 

Next, we show that for any $i\in [1;d+1]$, the $(-i,-i)$-minor of $A$ (i.e. $A$ minus the $i$th row and column) is also strictly positive-definite. For this, define $Q_i$ as the permutation matrix that switches row $i$ with row $i+1$, and define $P_i \equiv Q_iQ_{i+1}\ldots Q_d$ for $i\leq d$ and $P_{d+1} \equiv I_{d+1}$, the $(d+1)$-dimensional identity matrix. Further define $B \equiv P_i^\top AP_i$. Then, the $(-i,-i)$-minor of $A$ is the $(-(d+1),-(d+1))$-minor of $B$. By Sylvester's criterion, it suffices to show that $B$ is strictly positive-definite, but for any $c\in\mathbb{S}^{d}$, it holds that
\begin{align*}
c^\top B c = c^\top P_i^\top A P_i c \equiv \td{c}^\top A \td{c} > 0,
\end{align*}
where in the last inequality we have used the fact that
\begin{align*}
\td{c}^\top \td{c} = c^\top P_i^\top P_i c = c^\top Q_{d}\ldots Q_iQ_i\ldots Q_d c = c^\top c = 1.
\end{align*}
 
Next, we show that $x^\top A x\geq ca_d^2m^{2d+1}/n^{2d}$ for some $c = c(d)$; bounds involving $a_0,\ldots,a_{d-1}$ can be similarly obtained. For this, write $A$ in the block form $[A_{11},A_{12}; A_{21}, A_{22}]$, where $A_{12}\in\RR^{d\times 1}$. Writing $y$ as the first $d$ components of $x$, i.e. $y\equiv (a_0m^{1/2}, a_1m^{3/2}/n,\ldots,a_{d-1}m^{(2d-1)/2}/n^{d-1})^\top$, we have
\begin{align*}
x^\top Ax &= (y, a_dm^{(2d+1)/2}/n^d)^\top
\begin{bmatrix}
A_{11} & A_{12}\\
A_{21} & A_{22}
\end{bmatrix}
(y, a_dm^{(2d+1)/2}/n^d)\\
&= y^\top A_{11}y + 2y^\top A_{21}a_dm^{(2d+1)/2}/n^d + A_{22}(a_dm^{(2d+1)/2}/n^d)^2.
\end{align*}
This is a quadratic form in $y$ and achieves its minimum at $y^* = -A_{11}^{-1}A_{12}a_dm^{(2d+1)/2}/n^d$ (note that  $A_{11}$, the $(-(d+1),-(d+1))$-minor of $A$, is indeed invertible as proved before), which implies that 
\begin{align*}
x^\top Ax \geq a_d^2\frac{m^{2d+1}}{n^{2d}}(A_{22} - A_{21}A_{11}^{-1}A_{12}).
\end{align*}
Therefore if we can show that $A_{22} \geq (1+\varepsilon)A_{21}A_{11}^{-1}A_{12}$ for some positive $\varepsilon = \varepsilon(d)$, then we have
\begin{align*}
x^\top A x &\geq \frac{\varepsilon}{1+\varepsilon}a_d^2\frac{m^{2d+1}}{n^{2d}}A_{22} = \frac{\varepsilon}{1+\varepsilon}a_d^2\frac{m^{2d+1}}{n^{2d}}m^{-(2d+1)}\sum_{k=1}^{m} k^{2d}\\
&\geq \frac{\varepsilon}{1+\varepsilon}a_d^2\frac{m^{2d+1}}{n^{2d}}m^{-(2d+1)}\int_0^mx^{2d}\ \d{x} = \frac{\varepsilon}{(2d+1)(1+\varepsilon)}a_d^2\frac{m^{2d+1}}{n^{2d}}.
\end{align*}
Using the block matrix inverse formula $(A^{-1})_{d+1,d+1} = (A_{22} - A_{21}A_{11}^{-1}A_{12})^{-1}$ and the fact that $(A^{-1})_{d+1,d+1} \leq \|A^{-1}\|_2 = \lambda_{\min}^{-1}(A)$ ($\lambda_{\min}$ takes the smallest eigenvalue), we have
\begin{align*}
&\qquad\quad A_{22} \geq (1+\varepsilon)A_{21}A_{11}^{-1}A_{12} \iff (1+\varepsilon)(A_{22} - A_{21}A_{11}^{-1}A_{12}) \geq \varepsilon A_{22}\\
& \iff (A^{-1})_{d+1,d+1}\leq \frac{1+\varepsilon}{\varepsilon}A_{22}^{-1} \Leftarrow \lambda_{\min}^{-1}(A) \leq \frac{1+\varepsilon}{\varepsilon}\min_{1\leq j\leq d+1}A_{jj}^{-1},
\end{align*}
which is further implied by
\begin{align}
\label{eq:criterion}
\lambda_{\min}(A)\geq \frac{\varepsilon}{1+\varepsilon}\max_{1\leq j\leq d+1}A_{jj}.
\end{align}
For this, we have, for every $j\in[1;d+1]$,
\begin{align*}
A_{jj} &= m^{-(2j-1)}\sum_{k=1}^mk^{2j-2} \leq m^{-(2j-1)}\int_{1}^{m+1}x^{2j-2}\ \d{x}\\
&\leq \frac{1}{2j-1}\bigg(1+\frac{1}{m}\bigg)^{2j-1} \leq 2^{2d+1}.
\end{align*}
It remains to show that there exists some sufficiently small $c^* = c^*(d)$ such that $\lambda_{\min}(A)\geq c^* > 0$, then we can take $\varepsilon = c^*/(2^{2d+1}-c^*)$ in \eqref{eq:criterion}. For this, let $U$ be a random variable uniformly distributed on $[0,1]$ and define matrix $\bar{A}$ as $\bar{A}_{i,j}\equiv \E\parr*{U^{i-1}\cdot U^{j-1}}$. Then, since $d$ is fixed, it holds by the definition of $A,\bar{A}$, and the Portmanteau theorem that $A\rightarrow\bar{A}$ in the matrix spectral norm as $m\rightarrow\infty$. By Weyl's inequality, there exists some positive integer $N = N(d)$ such that for $m\geq N$, $\lambda_{\min}(A)\geq \lambda_{\min}(\bar{A})/2$. On the other hand, a similar argument that establishes the positive definiteness of $A$ yields that $\lambda_{\min}(\bar{A})\geq c > 0$ for some $c = c(d)$. Therefore we can take $c^* = c^*(d) = \min_{d+1\leq m \leq N}\lambda_{\min}(A(m,d))\wedge (c/2)$. This completes the proof. 
\end{proof}

\begin{lemma}\label{lemma:wedge}
Let $\{a_i\}_{i=1}^m,\{b_i\}_{i=1}^m$ be two non-negative sequences. Then, it holds that $\parr*{\bigwedge_{i=1}^m a_i}\cdot\parr*{\bigvee_{i=1}^m b_i}\geq \bigwedge_{i=1}^m a_ib_i$. 
\end{lemma}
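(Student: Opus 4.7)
The plan is to prove the inequality by a pointwise comparison followed by taking the minimum over the index. Since all quantities are non-negative, standard monotonicity of $\min$ and $\max$ under multiplication applies, and no additional structure on $\{a_i\}, \{b_i\}$ is needed.

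First, fix any index $i\in[1;m]$. By the definition of maximum, $b_i\leq \bigvee_{j=1}^m b_j$. Multiplying both sides by the non-negative quantity $a_i$ yields
\begin{align*}
a_ib_i \leq a_i\cdot \Big(\bigvee_{j=1}^m b_j\Big).
\end{align*}

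Next, take the minimum over $i\in[1;m]$ on both sides. Since the right-hand side factor $\bigvee_{j=1}^m b_j$ does not depend on $i$ and is non-negative, it can be pulled out of the minimum, giving
\begin{align*}
\bigwedge_{i=1}^m a_ib_i \leq \Big(\bigvee_{j=1}^m b_j\Big)\cdot \bigwedge_{i=1}^m a_i,
\end{align*}
which is exactly the desired inequality.

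There is no genuine obstacle here; the only thing to watch is that non-negativity of the sequences is used twice (to preserve inequalities under multiplication and to pull the $\max$ out of the $\min$). Without non-negativity, neither step is valid in general.
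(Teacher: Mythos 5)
Your proof is correct and takes essentially the same elementary route as the paper: both rely on the pointwise bound $b_i \le \bigvee_j b_j$ together with non-negativity, with the paper simply evaluating at the argmin of $\{a_i\}$ rather than taking the minimum at the end. No issues.
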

\begin{proof}[Proof of Lemma \ref{lemma:wedge}]
Without loss of generality, let $a_1$ be the smallest value among $\{a_i\}_{i=1}^m$. Then, it holds that $\big(\bigwedge_{i=1}^m a_i\big)\cdot\big(\bigvee_{i=1}^m b_i\big) = a_1\cdot\big(\bigvee_{i=1}^m b_i\big)\geq a_1b_1 \geq \big(\bigwedge_{i=1}^m a_ib_i\big).$
\end{proof}

\begin{lemma}\label{lemma:two_quad}
Let $\alpha_1,\alpha_2 > 0$ and $\beta_1,\beta_2$ be real numbers. Then, for any $x\in\RR$, it holds that
\begin{align*}
\alpha_1(x+\beta_1)^2 + \alpha_2(x+\beta_2)^2 \geq (\alpha_1\wedge \alpha_2)(\beta_1-\beta_2)^2/2.
\end{align*}
\end{lemma}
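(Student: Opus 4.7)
The statement is a two-term lower bound of quadratic form, for which I would give a short direct proof rather than invoking any machinery. The cleanest route is to throw away the constant that makes the bound asymmetric before doing any calculus, which reduces everything to a well-known one-line inequality.

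My plan is as follows. Without loss of generality I will assume $\alpha_1 \le \alpha_2$, so that $\alpha_1 \wedge \alpha_2 = \alpha_1$. Then since $\alpha_2 \ge \alpha_1 > 0$, I can bound
\[
\alpha_1 (x+\beta_1)^2 + \alpha_2 (x+\beta_2)^2 \;\ge\; \alpha_1 \bigl[ (x+\beta_1)^2 + (x+\beta_2)^2 \bigr].
\]
This discards the stronger weight on the second term but is valid for \emph{every} $x \in \mathbb{R}$. After this reduction, the problem is translation-invariant in $x$, and the only structural quantity left is $\beta_1 - \beta_2$.

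The key elementary inequality I will use is the well-known quadratic estimate $a^2 + b^2 \ge (a-b)^2/2$, which follows immediately from expanding $(a+b)^2 \ge 0$, i.e.\ $2(a^2+b^2) - (a-b)^2 = (a+b)^2 \ge 0$. Applying this with $a = x + \beta_1$ and $b = x + \beta_2$ gives
\[
(x+\beta_1)^2 + (x+\beta_2)^2 \;\ge\; \frac{(\beta_1 - \beta_2)^2}{2}.
\]
Multiplying by $\alpha_1 = \alpha_1 \wedge \alpha_2$ yields the desired conclusion. There is no real obstacle here; the only decision point is whether to minimize $\alpha_1(x+\beta_1)^2 + \alpha_2(x+\beta_2)^2$ over $x$ exactly (which would give the sharper bound $\alpha_1\alpha_2(\beta_1-\beta_2)^2/(\alpha_1+\alpha_2)$ and then invoke $\alpha_1\alpha_2/(\alpha_1+\alpha_2) \ge (\alpha_1 \wedge \alpha_2)/2$), or to use the symmetric reduction above, which avoids differentiation entirely. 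I would prefer the latter since the stated constant $1/2$ is exactly what that route delivers and the argument fits in two lines.
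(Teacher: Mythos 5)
Your proof is correct, and it takes a genuinely different (and shorter) route than the paper. The paper minimizes the quadratic form over $x$ exactly: the minimizer is $x^\ast = -(\alpha_1\beta_1 + \alpha_2\beta_2)/(\alpha_1+\alpha_2)$, giving the sharp minimum value $\frac{\alpha_1\alpha_2}{\alpha_1+\alpha_2}(\beta_1-\beta_2)^2$, and then bounds the harmonic-mean-type prefactor by $\frac{\alpha_1\alpha_2}{\alpha_1+\alpha_2}\geq (\alpha_1\wedge\alpha_2)/2$. You instead discard the larger weight up front — replacing both coefficients by $\alpha_1\wedge\alpha_2$ — and invoke the elementary inequality $a^2+b^2\geq (a-b)^2/2$ with $a=x+\beta_1$, $b=x+\beta_2$. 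The paper's route produces a sharper intermediate bound (the exact minimum of the quadratic form) that would matter if one wanted the best possible constant, but since the stated lemma only asks for the factor $1/2$, your symmetric reduction delivers precisely that with no completion of squares and is arguably cleaner. You already noticed and articulated this trade-off, which is exactly the right observation; both arguments are valid and yield the same final constant. (As a very minor note on the paper itself, its proof contains a typo: the minimum value is written with $(\beta_1-\beta_1)^2$ where $(\beta_1-\beta_2)^2$ is clearly intended.)
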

\begin{proof}[Proof of Lemma \ref{lemma:two_quad}]
At $x^*\equiv -(\alpha_1/(\alpha_1+\alpha_2)\cdot \beta_1 + \alpha_2/(\alpha_1+\alpha_2)\cdot \beta_2)$, the quadratic form achieves it minimum value $\frac{\alpha_1\alpha_2}{\alpha_1+\alpha_2}(\beta_1-\beta_1)^2$, which is further lower bounded by $(\alpha_1\wedge \alpha_2)(\beta_1-\beta_2)^2/2$.
\end{proof}

\begin{lemma}\label{lemma:bin_identity}
Let $n$ be any positive integer. Then, for any polynomial $P(\cdot)$ of degree strictly smaller than $n$, it holds that
\begin{align*}
\sum_{j=0}^n {n\choose j}P(j)(-1)^j = 0.
\end{align*}
\end{lemma}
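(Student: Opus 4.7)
By linearity in $P$, it suffices to establish the identity when $P(x) = x^k$ for each integer $k$ with $0 \leq k < n$, i.e., to show
\[
S_{n,k} \equiv \sum_{j=0}^n \binom{n}{j} (-1)^j j^k = 0, \qquad 0 \leq k < n.
\]
Once this is done, writing an arbitrary polynomial $P$ of degree $<n$ as $P(x)=\sum_{k=0}^{n-1} a_k x^k$ gives $\sum_{j=0}^n \binom{n}{j}P(j)(-1)^j = \sum_{k=0}^{n-1} a_k S_{n,k} = 0$.

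The plan for proving $S_{n,k}=0$ is to use a generating function argument. Consider the function $F(x) \equiv (1 - e^x)^n$. Expanding the $n$-th power by the binomial theorem,
\[
F(x) = \sum_{j=0}^n \binom{n}{j}(-1)^j e^{jx} = \sum_{k=0}^\infty \bigg(\sum_{j=0}^n \binom{n}{j}(-1)^j j^k\bigg)\frac{x^k}{k!} = \sum_{k=0}^\infty S_{n,k}\frac{x^k}{k!}.
\]
On the other hand, since $1 - e^x = -x - x^2/2! - x^3/3! - \cdots$ has a simple zero at $x=0$, the function $F(x) = (1-e^x)^n$ has a zero of order exactly $n$ at $x=0$, so its Taylor expansion around $0$ contains no terms of degree $<n$. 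Matching coefficients therefore yields $S_{n,k}=0$ for every $0 \leq k < n$, which completes the proof.

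There is essentially no obstacle here; the argument is purely a classical identity. As a sanity check (and an alternative proof one could write out in place of the generating function argument), the same conclusion follows from the observation that $S_{n,k} = (-1)^n (\Delta^n f)(0)$ for $f(x)=x^k$, where $\Delta$ is the forward difference operator $\Delta f(x) = f(x+1)-f(x)$, combined with the standard fact that $\Delta$ reduces the degree of a polynomial by one, so $\Delta^n$ annihilates every polynomial of degree less than $n$.
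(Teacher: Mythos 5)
Your proof is correct, and it takes a genuinely different route from the paper's. The paper proves the identity by induction on $n$: the base case $n=1$ is the alternating-sum binomial identity, and the inductive step uses the absorption identity $j\binom{n+1}{j} = (n+1)\binom{n}{j-1}$ to reduce $\sum_{j}\binom{n+1}{j}j^d(-1)^j$ to $(n+1)\sum_{j}\binom{n}{j}(j+1)^{d-1}(-1)^j$, which vanishes by the inductive hypothesis applied to the lower-degree polynomial $(j+1)^{d-1}$. Your generating-function argument avoids induction entirely: you observe that $S_{n,k}$ is (up to $k!$) the $k$-th Taylor coefficient of $(1-e^x)^n$, and since $1-e^x$ has a simple zero at the origin, $(1-e^x)^n$ vanishes to order exactly $n$ there, killing all coefficients of index $<n$ at once. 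Your secondary observation, that $S_{n,k}=(-1)^n(\Delta^n f)(0)$ with $f(x)=x^k$ and that $\Delta^n$ annihilates polynomials of degree $<n$, is yet another standard route and arguably the most transparent explanation of why the identity holds. The paper's approach is more elementary and uses nothing beyond binomial identities, which keeps the appendix self-contained; yours is shorter and exposes the structural reason the sum vanishes. Both are fine.
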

\begin{proof}[Proof of Lemma \ref{lemma:bin_identity}]
We prove by induction. The claim clearly holds for $n = 1$. Suppose the claim holds for some $n$, we will prove that it also holds for $n + 1$. Let $d$ be the degree of $P(\cdot)$. We will prove that the claim holds for all monomials $P(x) \equiv x^d$ where $0\leq d \leq n=(n+1)-1$. The case $d = 0$ follows from the binomial identity:
\begin{align*}
\sum_{j=0}^{n+1} {n+1\choose j}(-1)^j = (1 + (-1))^{n+1} = 0.
\end{align*}
Next, for any $1\leq d \leq n$, it holds that
\begin{align*}
\sum_{j=0}^{n+1} {n+1\choose j} j^d (-1)^j &= \sum_{j=1}^{n+1} {n+1\choose j}j^d (-1)^j = (n+1)\sum_{j=1}^{n+1}{n\choose j-1}j^{d-1}(-1)^j\\
&= (n+1)\sum_{j=0}^n {n\choose j}(j+1)^{d-1}(-1)^j = 0,
\end{align*}
where the last identity follows from the claim for $n$ and the fact that $0\leq d - 1 \leq n-1 < n$.
\end{proof}

For the following lemma, recall the definition of the sequence $\{\overline{\beta}^\cdot_{\cdot,\cdot}\}$ defined before Lemma \ref{lemma:quad_forms}.
\begin{lemma}\label{lemma:cancellation}
Fix $d,d_0$, $k_0$ as defined in \eqref{eq:boundary}, and any $s\in[0;\floor{(d_0+1)/(d-d_0)} - 1]$. Suppose there exists some $c_1 = c_1(d)$ such that
\begin{align}\label{eq:cancel_before}
1\geq c_1\cdot\sum_{k=1}^{(s+1)d_0-sd+1}\frac{(n-n_{k_0-1})^{2k-1}}{n^{2(k-1)}} \bigg(\sum_{\ell=0}^{s(d-d_0)} \overline{\beta}_{k,\ell}^s a_{k+\ell}^{k_0-1-s}\bigg)^2. 
\end{align}
Furthermore, assume that $n_{k_0;k_0-1}\geq n_{k_0-1-s;k_0 - 2 -s}$. Then, there exists some positive constant $c_2 = c_2(d)$ such that
\begin{align*}
1\geq c_2\cdot\sum_{k=1}^{(s+1)d_0-sd+1}\frac{(n-n_{k_0-1})^{2k-1}}{n^{2(k-1)}} \bigg(\sum_{\ell=0}^{(s+1)(d-d_0)} \overline{\beta}_{k,\ell}^{s+1} a_{k+\ell}^{k_0-2-s}\bigg)^2. 
\end{align*}
\end{lemma}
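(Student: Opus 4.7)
I would prove the claim by executing in full generality the iterative cancellation sketched for the cubic case in Section \ref{subsec:noshape_ground}. For each fixed $k \in [1;(s+1)d_0-sd+1]$, the strategy is first to substitute the continuity relations of Lemma \ref{lemma:para_coef} (which express $a^{k_0-1-s}_{p}$ for $p \le d_0+1$ as a linear combination of $\{a^{k_0-2-s}_q\}_{q\ge p}$) into the $k$-th inner sum of the assumption \eqref{eq:cancel_before}, then to match the resulting combination against the target $\sum_{m} \overline{\beta}^{s+1}_{k,m}\, a^{k_0-2-s}_{k+m}$, and finally to apply Cauchy--Schwarz together with the geometric assumption $n_{k_0;k_0-1} \ge n_{k_0-1-s;k_0-2-s}$ to control constants.

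\textbf{Key computation.} Note that because $k \le (s+1)d_0-sd+1$ and $\ell \le s(d-d_0)$, the sum $k+\ell$ is always bounded by $d_0+1$, so Lemma \ref{lemma:para_coef} applies to every coefficient appearing in the $k$-th inner sum of \eqref{eq:cancel_before}. Substituting, swapping the order of summation and reindexing $q = k+m$ gives
\[
\sum_{\ell=0}^{s(d-d_0)}\overline{\beta}^{s}_{k,\ell}\,a^{k_0-1-s}_{k+\ell} \;=\; \sum_{m=0}^{d+1-k}\gamma^{s+1}_{k,m}\,a^{k_0-2-s}_{k+m},
\]
where $\gamma^{s+1}_{k,m} \equiv \sum_{\ell=0}^{m\wedge s(d-d_0)}\binom{k+m-1}{k+\ell-1}\,n^{m-\ell}_{k_0-1-s;k_0-2-s}\,\overline{\beta}^{s}_{k,\ell}$. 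The central algebraic claim is that $\gamma^{s+1}_{k,m} = \overline{\beta}^{s+1}_{k,m}$ for every $m \in [0;(s+1)(d-d_0)]$; for $m$ outside this range, the convention $\overline{\beta}^{s+1}_{k,m}=0$ merely absorbs the remaining ``nuisance'' contributions into forms tracked at the next cancellation step. Once this identity is in hand, applying Cauchy--Schwarz to the obtained linear combination, together with the assumption $n_{k_0;k_0-1}\ge n_{k_0-1-s;k_0-2-s}$ (used to dominate each power $n^{2j}_{k_0-1-s;k_0-2-s}$ by a comparable factor of $((n-n_{k_0-1})/n)^{2j}$ already present in \eqref{eq:cancel_before}), produces the desired lower bound with some $c_2 = c_2(d) > 0$.

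\textbf{Main obstacle.} The principal difficulty is the combinatorial identification $\gamma^{s+1}_{k,m}=\overline{\beta}^{s+1}_{k,m}$. Because the binomial coefficient $\binom{k+m-1}{k+\ell-1}$ produced by substitution via Lemma \ref{lemma:para_coef} differs from the coefficient $\binom{(s+1)(d-d_0)-\ell}{m-\ell}$ appearing in the recursive definition \eqref{eq:beta_right}, the identification cannot be done pointwise in $\ell$; it must emerge only after the full $\ell$-sum. I would prove this identity by induction on $s$, with base case $s=0$ trivial (since $\overline{\beta}^{0}_{k,0}=1$); the inductive step uses Lemma \ref{lemma:bin_identity} (an alternating binomial-weighted polynomial identity) to effect the cancellation that converts the substitution-form coefficient into the recursion-form coefficient, while the explicit ratio $D(k,m)/D(k,\ell)$ takes care of the rising/falling factorial prefactors $\overline{\odot}$ and $\underline{\odot}$. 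Once this bookkeeping is carried out, the Cauchy--Schwarz and constant-control steps are routine.
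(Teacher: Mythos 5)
Your proposal hinges on a per-$k$ substitution: for a \emph{fixed} $k$, you substitute Lemma \ref{lemma:para_coef} into $T_k\equiv\sum_\ell\overline{\beta}^{s}_{k,\ell}a^{k_0-1-s}_{k+\ell}$ and claim the result matches the $k$-th target form $\sum_m\overline{\beta}^{s+1}_{k,m}a^{k_0-2-s}_{k+m}$. This cannot work, for two separate reasons.

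First, the range mismatch is not cosmetic. Substituting Lemma \ref{lemma:para_coef} into $T_k$ produces coefficients of $a^{k_0-2-s}_{k+m}$ for \emph{all} $m$ up to $d+1-k$, with $\gamma^{s+1}_{k,m}>0$ throughout (all $\overline{\beta}$'s and $n_{\cdot;\cdot}$'s are nonnegative, and each $\gamma^{s+1}_{k,m}$ contains at least the $\ell=0$ term). The target form has the coefficient of $a^{k_0-2-s}_{k+m}$ equal to \emph{zero} for $m>(s+1)(d-d_0)$. You cannot drop these extra terms or "absorb them at the next step" — the conclusion of the lemma \emph{is} the statement that the quadratic forms with coefficients $\overline{\beta}^{s+1}_{k,\cdot}$ (and only those) are controlled. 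Cancelling the excess "nuisance" indices is precisely what this lemma is for, and it requires combining \emph{different} $T_k$'s with a carefully chosen set of weights $\overline{v}^{s+1}_{\cdot,\cdot}$. That is the content of the paper's Lemma \ref{lemma:beta_form}: a specific linear combination $\sum_{k\ge i}\overline{v}^{s+1}_{\overline{i},k-i}T_k$ yields exactly $\sum_{m}\overline{\beta}^{s+1}_{i,m}a^{k_0-2-s}_{i+m}$, with the high-index coefficients vanishing because of an alternating-sum identity (Lemma \ref{lemma:bin_identity}). Only after this cross-$k$ cancellation does the Cauchy--Schwarz step (passing from $\sum_k w_k T_k^2$ to a weight times $(\sum_k \overline{v}_k T_k)^2$) make sense, and the hypothesis $n_{k_0;k_0-1}\ge n_{k_0-1-s;k_0-2-s}$ is used precisely to bound those $\overline{v}$-dependent weight ratios.

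Second, even restricted to $m\le(s+1)(d-d_0)$, the identity $\gamma^{s+1}_{k,m}=\overline{\beta}^{s+1}_{k,m}$ is false. Take $d_0=d-1$, $s=0$, $k=1$, $m=1$: your formula gives $\gamma^{1}_{1,1}=\binom{1}{0}n_{d+1;d}\overline{\beta}^0_{1,0}=n_{d+1;d}$, while the paper's definition gives $\overline{\beta}^1_{1,1}=D(1,1)\overline{\beta}^1_1=\tfrac{1}{d}\, n_{d+1;d}$. These differ by a factor $d$. The discrepancy is not a mere constant that can be swept into $c_2$: the ratio between $\gamma^{s+1}_{k,m}$ and $\overline{\beta}^{s+1}_{k,m}$ varies with $m$, which changes the \emph{relative} magnitudes of the components in the quadratic form, and these relative magnitudes are exactly what the downstream estimates (Lemma \ref{lemma:coef_est_1}) exploit. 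The $D(i,j)$ prefactors in the definition of $\overline{\beta}^s_{i,j}$ encode the $i$-dependence needed for the cross-$k$ identity of Lemma \ref{lemma:beta_form} to close; they are not reproducible from a single substitution.

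In short, the missing idea is the cross-index linear combination of the $T_k$'s — and with it, Lemma \ref{lemma:beta_form} and the $\overline{v}$-coefficients. Your induction-on-$s$ plan for the combinatorics points in a reasonable direction, but it must be run across the $k$-index, not within a single $T_k$.
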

Note that in the above lemma the hypothesis involves only quadratic forms with `shared coefficients' $\{a^{\cdot}_{\ell}\}_{\ell\in[1;d_0+1]}$, while the conclusion involves the ones with both `shared coefficients' $\{a^{\cdot}_{\ell}\}_{\ell\in[1;d_0+1]}$ and `nuisance coefficients' $\{a^{\cdot}_{\ell}\}_{\ell\in[d_0+2;d+1]}$.

Before the proof of Lemma \ref{lemma:cancellation}, we need one further result. For this, some extra notation is needed: 
\begin{align*}
\overline{v}^{s}_{i,j} &\equiv \frac{\overline{\odot}(i+d-d_0-j);s(d-d_0))}{j! \overline{\odot}(i+d-d_0; s(d-d_0))}(-1)^{j}n_{k_0-s;k_0-1-s}^j\\
&\qquad\qquad\times \prod_{m=1}^j \bigg(d-\big(i+(d-1-d_0)\big)-s(d-d_0)+m\bigg),\\
T_k &\equiv \sum_{\ell=0}^{s(d-d_0)} \overline{\beta}_{k,\ell}^s a_{k+\ell}^{k_0-1-s}.
\end{align*}

\begin{lemma}\label{lemma:beta_form}
Fix $d$, $d_0$, and $s$.  It holds for $i \in [1;(s+1)d_0-sd+1]$ that
\[
M\equiv\sum_{k= i}^{(s+1)d_0-sd+1} \overline{v}^{s+1}_{\overline{i},k-i}\cdot T_k = \sum_{k=0}^{(d-d_0)(s+1)}\overline{\beta}^{s+1}_{i,k}a^{k_0-2-s}_{i+k}.
\]
\end{lemma}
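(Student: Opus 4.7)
The plan is to expand $M$ as a linear combination of $\{a^{k_0-2-s}_q\}_q$ and then match, for each $m\in[0;(d-d_0)(s+1)]$, the coefficient of $a^{k_0-2-s}_{i+m}$ to $\overline{\beta}^{s+1}_{i,m}$ term by term. Substituting the definition of $T_k$ and applying Lemma \ref{lemma:para_coef} to rewrite $a^{k_0-1-s}_{k+\ell} = \sum_{q\geq k+\ell}\binom{q-1}{k+\ell-1} n_{k_0-1-s;k_0-2-s}^{q-k-\ell}\, a^{k_0-2-s}_q$ converts $M$ into the triple sum
$$M \;=\; \sum_{k=i}^{(s+1)d_0-sd+1}\overline{v}^{s+1}_{i,k-i}\sum_{\ell=0}^{s(d-d_0)} \overline{\beta}^s_{k,\ell}\sum_{q\geq k+\ell}\binom{q-1}{k+\ell-1}n_{k_0-1-s;k_0-2-s}^{q-k-\ell}\,a^{k_0-2-s}_q,$$
after which I would reindex via $q=i+m$, swap summation orders, and read off the coefficient of $a^{k_0-2-s}_{i+m}$.

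The key observation that justifies the specific shape of $\overline{v}^{s+1}$ is that, using the standard identity $(-1)^j\prod_{\mu=1}^j(x+\mu)/j! = \binom{-x-1}{j}$ applied to the product factor in the definition of $\overline{v}$, the weight $\overline{v}^{s+1}_{i,k-i}$ reduces (up to a $k$-independent prefactor) to a binomial coefficient in $k$. Summing such binomial weights against any polynomial in $k$ of degree strictly less than the summation length vanishes by Lemma \ref{lemma:bin_identity}. This is precisely what cancels all off-diagonal contributions and forces the coefficient of $a^{k_0-2-s}_{i+m}$ to collapse to a single-$\ell$ sum of the form
$$D(i,m)\sum_{\ell=0}^m\binom{(s+1)(d-d_0)-\ell}{m-\ell}\, n_{k_0-s-1;k_0-s-2}^{m-\ell}\,\overline{\beta}^s_\ell \;=\; D(i,m)\,\overline{\beta}^{s+1}_m \;=\; \overline{\beta}^{s+1}_{i,m},$$
where the first equality is exactly the recursion defining $\overline{\beta}^{s+1}_m$ from $\overline{\beta}^s$, and the factor $D(i,m)$ produced by the interaction between the rising factorials in $\overline{v}^{s+1}_{i,k-i}$ and the binomial $\binom{i+m-1}{k+\ell-1}$ converts $\overline{\beta}^{s+1}_m$ into $\overline{\beta}^{s+1}_{i,m}$.

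The main obstacle is this central combinatorial verification: the expression mixes rising factorials through $\overline{v}^{s+1}_{i,\cdot}$ and $D(i,\cdot)$, binomial coefficients from Lemma \ref{lemma:para_coef}, and polynomial powers of $n_{k_0-1-s;k_0-2-s}$, so aligning all indices after the reindexing is bookkeeping-heavy. I expect the cleanest route is to fix $m$, regard the coefficient of $a^{k_0-2-s}_{i+m}$ as a polynomial in $m-\ell$ (equivalently in the power of $n_{k_0-1-s;k_0-2-s}$), and verify it degree by degree, with Lemma \ref{lemma:bin_identity} supplying the vanishing of all terms that do not contribute to the target coefficient. Vanishing of contributions for $m>(d-d_0)(s+1)$ follows from the convention $\overline{\beta}^s_\ell=0$ for $\ell>s(d-d_0)$, which must be tracked carefully at the upper boundary of the $\ell$-summation; the boundary cases $m=0$ and $m=(d-d_0)(s+1)$ should be checked by direct computation at the end to confirm the identity degenerates correctly.
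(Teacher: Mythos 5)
Your plan matches the paper's proof at every structural step: expand each $T_k$ via its definition, apply Lemma \ref{lemma:para_coef} to convert $\{a^{k_0-1-s}_{k+\ell}\}$ into $\{a^{k_0-2-s}_q\}$, reindex, and match the coefficient of $a^{k_0-2-s}_{i+j}$ against $\overline{\beta}^{s+1}_{i,j}$ term by term. This is exactly what the paper does, writing that coefficient as $\sum_\ell n_{k_0-1-s;k_0-2-s}^{j-\ell}\overline{\beta}^s_\ell \cdot A_\ell$ for a suitable $A_\ell$ and then evaluating $A_\ell$.

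However, your description of how the $k$-sum gets evaluated rests on a misreading of what Lemma \ref{lemma:bin_identity} can deliver. That lemma asserts $\sum_{k=0}^n\binom{n}{k}(-1)^k P(k)=0$ only when $\deg P<n$; it only ever gives zero, never a value. In the paper's proof it is invoked solely for the regime $j>(s+1)(d-d_0)$, where the falling-factorial polynomial $\underline{\odot}(d-i-k+1-\ell;(s+1)(d-d_0)-\ell)$ in the $k$-sum has degree strictly less than $j-\ell$, so $A_\ell=0$ and the coefficient of $a^{k_0-2-s}_{i+j}$ vanishes. For the target range $j\leq(s+1)(d-d_0)$ the degree is $\geq j-\ell$, the lemma is silent, and the $k$-sum does \emph{not} vanish: the paper evaluates it by iterating Pascal's rule $\binom{n}{k}=\binom{n-1}{k-1}+\binom{n-1}{k}$ to reduce $\sum_{k=0}^{j-\ell}\binom{j-\ell}{k}(-1)^k\underline{\odot}(d-i-k+1-\ell;(s+1)(d-d_0)-\ell)$ all the way down to $\underline{\odot}((s+1)(d-d_0)-\ell;j-\ell)\,\underline{\odot}(d-i+1-j;(s+1)(d-d_0)-j)$, at which point $A_\ell=D(i,j)\binom{(s+1)(d-d_0)-\ell}{j-\ell}$ falls out and the recursion \eqref{eq:beta_right} appears. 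This evaluation --- computing the $(j-\ell)$-th finite difference of a falling factorial of degree at least $j-\ell$ --- is the substantive content of the lemma, and your proposal as written has no mechanism to produce it: attributing it to Lemma \ref{lemma:bin_identity}, as you do when you say the lemma ``cancels all off-diagonal contributions and forces the coefficient to collapse,'' conflates the vanishing claim with the positive identity.

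A smaller point: the first index on $\overline{v}^{s+1}$ is $\overline{i}=(s+2)d_0-(s+1)d+2-i$, not $i$; that shift is exactly what makes the rising-factorial prefactors in $\overline{v}^{s+1}_{\overline{i},k-i}$ line up against the factors coming from $\overline{\beta}^s_{k,\ell}$ and Lemma \ref{lemma:para_coef}, and carrying through your computation with $i$ in place of $\overline{i}$ would break the algebra.
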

\begin{proof}
In order to prove the desired result, we need to show the following two claims:
\begin{itemize}
\item The coefficient of $a_{i+j}^{k_0-2-s}$ in $M$ equals $0$ for $(s+1)(d-d_0)+1\leq j\leq d-i+1$;
\item The coefficient of $a_{i+j}^{k_0-2-s}$ in $M$ equals $\overline{\beta}^{s+1}_{i,j}$ for $0\leq j\leq (s+1)(d-d_0)$.
\end{itemize}
Let
\begin{align*}
i_0&\equiv i_0(d,d_0,s,i)\equiv (s+1)d_0 - sd + 1- i,\\
\overline{i}&\equiv \overline{i}(d,d_0,s,i) \equiv (s+2)d_0-(s+1)d+2-i = i_0-(d-1-d_0),\\
 \Delta n &\equiv n_{k_0-1-s;k_0-2-s}.
\end{align*}
By definition of $M$ and Lemma \ref{lemma:para_coef}, we have 
\begin{align*}
&\co{M;a_{i+j}^{k_0-2-s}} = \sum_{k=i}^{(s+1)d_0-s d+1}\overline{v}^{s+1}_{\overline{i}, k-i}\co{T_k;a^{k_0-2-s}_{i+j}}\\
&= \sum_{k=i}^{(s+1)d_0-sd+1} \frac{\overline{\odot}(\overline{i}+d-d_0-(k-i));(s+1)(d-d_0))(-1)^{k-i}(\Delta n)^{k-i}}{(k-i)! \overline{\odot}(\overline{i}+d-d_0;(s+1)(d-d_0))}\\
&\qquad\qquad \times \prod_{m=1}^{k-i}(d-i_0-(s+1)(d-d_0)+m)\cdot \mathrm{Coef}\bigg[\sum_{\ell=0}^{s(d-d_0)}\overline{\beta}^s_{k,\ell}a^{k_0-1-s}_{k+\ell}; a^{k_0-2-s}_{i+j}\bigg]\\
&= \sum_{k=0}^{i_0} \frac{\overline{\odot}(i_0+1-k;(s+1)(d-d_0))(-1)^k(\Delta n)^k}{k!\overline{\odot}(i_0+1;(s+1)(d-d_0))}\overline{\odot}(i;k)\\
&\qquad\qquad\times \bigg(\sum_{\ell=0}^{s(d-d_0)}\overline{\beta}^s_{i+k,\ell}\binom{i+j-1}{i+k+\ell-1}(\Delta n)^{j-k-\ell}\bigg)\\
&\equiv \sum_{\ell=0}^{s(d-d_0)}(\Delta n)^{j-\ell}\overline{\beta}^s_\ell \cdot A_\ell, 
\end{align*}
where
\begin{align*}
A_\ell\equiv  &\sum_{k=0}^{i_0 }(-1)^k \frac{\overline{\odot}(i_0+1-k;(s+1)(d-d_0))}{k!\overline{\odot}(i_0+1;(s+1)(d-d_0))}\overline{\odot}(i;k)\\
&\qquad\qquad \times {i+j-1\choose i+k+\ell-1}\frac{\overline{\odot}(i+k;\ell)}{\underline{\odot}(d+1-i-k;\ell)},
\end{align*}
and we used $\overline{\beta}_{i+k,\ell}^s = D(i+k,\ell)\overline{\beta}_\ell^s = \frac{\overline{\odot}(i+k;\ell)}{\underline{\odot}(d+1-i-k;\ell)}\overline{\beta}_\ell^s$, with $\overline{\beta}_\ell^s$ defined in (\ref{eq:beta_right}). Let $C(i,j,\ell) \equiv {i+j-1\choose i+\ell-1}\cdot\overline{\odot}(i;\ell)$. Then $C(i,j,\ell) \binom{j-\ell}{k} = \overline{\odot}(i;k)\binom{i+j-1}{i+k+\ell-1}\overline{\odot}(i+k;\ell)/k!$. So $A_\ell$ equals
\begin{align*}
&\quad C(i,j,\ell)\sum_{k=0}^{i_0 }{j-\ell\choose k}\frac{\overline{\odot}(i_0+1-k;(s+1)(d-d_0))}{\overline{\odot}(i_0+1;(s+1)(d-d_0))}(-1)^k\frac{1}{\underline{\odot}(d-i-k+1;\ell)}\\
&=C(i,j,\ell)\sum_{k=0}^{i_0+(s+1)(d-d_0)}{j-\ell\choose k}\frac{\overline{\odot}(i_0+1-k;(s+1)(d-d_0))}{\overline{\odot}(i_0+1;(s+1)(d-d_0))}(-1)^k\frac{1}{\underline{\odot}(d-i-k+1;\ell)}\\
&=C(i,j,\ell)\sum_{k=0}^{j-\ell}{j-\ell\choose k}\frac{\overline{\odot}(i_0+1-k;(s+1)(d-d_0))}{\overline{\odot}(i_0+1;(s+1)(d-d_0))}(-1)^k\frac{1}{\underline{\odot}(d-i-k+1;\ell)}\\
&= \frac{C(i,j,\ell)}{\overline{\odot}(i_0+1;(s+1)(d-d_0))}\sum_{k=0}^{j-\ell}{j-\ell\choose k}(-1)^k\underline{\odot}(d-i-k+1-\ell;(s+1)(d-d_0)-\ell),
\end{align*}
where the first identity follows from the fact that $\overline{\odot}(i_0+1-k;(s+1)(d-d_0)) = 0$ for any $i_0+1\leq k\leq i_0+(s+1)(d-d_0)$, the second identity follows from the fact that $i_0 + (s+1)(d-d_0) = d-i+1\geq j\geq j-\ell$, the third identity follows from the fact that $\ell \leq s(d-d_0)< (s+1)(d-d_0)$.

For the first claim, as $\underline{\odot}(d-i-k+1-\ell;(s+1)(d-d_0)-\ell)$ is a polynomial of degree at most $(s+1)(d-d_0) - \ell < j-\ell$, Lemma \ref{lemma:bin_identity} entails that $A_\ell = 0$ for all $0\leq \ell\leq s(d-d_0)$, thus proving the first claim. We now prove the second claim under the condition $j\leq (s+1)(d-d_0)$. By definition of the $\{\overline{\beta}^\cdot_{\cdot,\cdot}\}$ sequence, we have
\begin{align*}
\overline{\beta}^{s+1}_{i,j} &= D(i,j)\overline{\beta}^{s+1}_{j} = D(i,j)\bigg\{\sum_{\ell=0}^j \binom{(s+1)(d-d_0)-\ell}{j-\ell}(\Delta n)^{j-\ell}\overline{\beta}^s_\ell\bigg\}.
\end{align*}
Therefore, to prove the claim, it suffices to match the coefficients of $\overline{\beta}^s_{\ell}$ for $0\leq \ell\leq s(d-d_0)$, as $\overline{\beta}_{\ell}^s=0$ for $\ell >s(d-d_0)$ from the definition of $\overline{\beta}_{\cdot}^{s}$, and $A_\ell =0$ for $\ell\geq j$. In other words, we only need to show $A_\ell = D(i,j)\binom{(s+1)(d-d_0)-\ell}{j-\ell}$. By using iteratively the identity ${n\choose k} = \binom{n}{k-1} + \binom{n-1}{k-1}$, one has
\begin{align*}
&\sum_{k=0}^{j-\ell}{j-\ell\choose k}(-1)^k\underline{\odot}(d-i-k+1-\ell;(s+1)(d-d_0)-\ell)\\
&= \underline{\odot}((s+1)(d-d_0)-\ell;1)\\
&\qquad\qquad \times \sum_{k=0}^{j-\ell-1}{j-\ell-1\choose k}(-1)^k\underline{\odot}(d-i-k-\ell;(s+1)(d-d_0)-1-\ell)\\
&\qquad\qquad \ldots\\
&= \underline{\odot}((s+1)(d-d_0)-\ell;j-\ell-1)\\
&\qquad\qquad\times \sum_{k=0}^1{1\choose k}(-1)^k\underline{\odot}(d-i-k+2-j;(s+1)(d-d_0)+1-j)\\
&= \underline{\odot}((s+1)(d-d_0)-\ell;j-\ell)\underline{\odot}(d-i+1-j;(s+1)(d-d_0)-j).
\end{align*}
Lastly, by direct calculation, we have
\begin{align*}
A_\ell &= \frac{C(i,j,\ell)}{\overline{\odot}(i_0+1;(s+1)(d-d_0))}\cdot\underline{\odot}((s+1)(d-d_0)-\ell;j-\ell)\\
&\qquad\qquad\times\underline{\odot}(d-i+1-j;(s+1)(d-d_0)-j)\\
&= D(i,j)\binom{(s+1)(d-d_0)-\ell}{j-\ell}.
\end{align*}
The proof is complete.
\end{proof}

\begin{proof}[Proof of Lemma \ref{lemma:cancellation}]
Define for $i\in[1; (s+1)d_0-sd +1]$
\begin{align*}
M_i \equiv \sum_{k = i}^{ (s+1)d_0-sd+1 } \frac{(n-n_{k_0-1})^{2k-1}}{n^{2(k-1)}}T_k^2.
\end{align*}
Inequality \eqref{eq:cancel_before} entails that $1\gtrsim c\sum_{i=1}^{ (s+1)d_0-sd+1 } M_i$ for some $c = c(d)$.
We have for $i \in [1;(s+1)d_0-sd+1]$,
\begin{align*}
M_i &= \frac{(n-n_{k_0-1})^{2i-1}}{n^{2(i-1)}}T_i^2 + \sum_{k= i+1}^{ (s+1)d_0-sd+1 } \frac{(n-n_{k_0-1})^{2k-1}}{n^{2(k-1)}}\frac{\parr*{\overline{v}^{s+1}_{\overline{i},k-i}\cdot T_k}^2}{(\overline{v}^{s+1}_{\overline{i},k-i})^2}\\
&\geq \bigg(\frac{(n - n_{k_0-1})^{2i-1}}{n^{2(i-1)}}\wedge \bigwedge_{k=i+1}^{ (s+1)d_0-sd+1 } \frac{(n-n_{k_0-1})^{2k-1}}{n^{2(k-1)}(\overline{v}^{s+1}_{\overline{i},k-i})^2}\bigg)\Big(T_i^2 + \sum_{k= i+1}^{(s+1)d_0-sd+1 } (\overline{v}^{s+1}_{\overline{i},k-i}\cdot T_k)^2\Big)\\
&\gtrsim \bigg(\frac{(n - n_{k_0-1})^{2i-1}}{n^{2(i-1)}}\wedge \bigwedge_{k=i+1}^{ (s+1)d_0-sd+1 } \frac{(n - n_{k_0-1})^{2k-1}}{n^{2(k-1)}(\overline{v}^{s+1}_{\overline{i},k-i})^2}\bigg)\Big(T_i + \sum_{k= i+1}^{(s+1)d_0-sd+1 } \overline{v}^{s+1}_{\overline{i},k-i}\cdot T_k\Big)^2\\
&= \bigg(\frac{(n - n_{k_0-1})^{2i-1}}{n^{2(i-1)}}\wedge \bigwedge_{k=i+1}^{(s+1)d_0-sd+1 } \frac{(n - n_{k_0-1})^{2k-1}}{n^{2(k-1)}(\overline{v}^{s+1}_{\overline{i},k-i})^2}\bigg)\bigg(\sum_{k=0}^{(d-d_0)(s+1)}\overline{\beta}^{s+1}_{i,k}a^{k_0-2-s}_{i+k}\bigg)^2\\
&\gtrsim \frac{(n - n_{k_0-1})^{2i-1}}{n^{2(i-1)}}\cdot\bigg(\sum_{k=0}^{(d-d_0)(s+1)}\overline{\beta}^{s+1}_{i,k}a^{k_0-2-s}_{i+k}\bigg)^2.
\end{align*}
Here, the second identity follows from Lemma \ref{lemma:beta_form}, and the last inequality follows, by definition of $\{\overline{v}^\cdot_{\cdot,\cdot}\}$ and the condition $n_{k_0;k_0-1}\geq n_{k_0-1-s;k_0-2-s}$, from the calculation:
\begin{align*}
&\frac{(n - n_{k_0-1})^{2i-1}}{n^{2(i-1)}}\wedge \bigwedge_{k=i+1}^{(s+1)d_0-sd+1 } \frac{(n - n_{k_0-1})^{2k-1}}{n^{2(k-1)}(\overline{v}^{s+1}_{\overline{i},k-i})^2}\\
&\asymp \bigwedge_{k=i}^{(s+1)d_0-sd+1 } \frac{(n - n_{k_0-1})^{2k-1}(n_{k_0-1-s} - n_{k_0-2-s})^{-2(k-i)}}{n^{2(k-1)}} \asymp \frac{(n - n_{k_0-1})^{2i-1}}{n^{2(i-1)}}.
\end{align*}
Putting together the lower bounds for $M_i$, $i\in[1;(s+1)d_0-sd+1]$ yields the result.
\end{proof}

\begin{lemma}\label{lemma:property_beta}
Fix any $1\leq s\leq \floor{(d_0+1)/(d-d_0)}$ and $1\leq i\leq sd_0-(s-1)d+1$. For any $0\leq j_1 \leq j_2 \leq s(d-d_0)$, define the following two quantities:
\begin{align*}
&\underline{S}(j_1)\equiv \underline{S}(j_1;d,d_0,s)\equiv \prod_{\ell=1}^{\floor{j_1/(d-d_0)}}n_{k_0-\ell;k_0-1-s}^{d-d_0}\times  n_{k_0-1-\floor{j_1/(d-d_0)};k_0-1-s}^{\Mod(j_1;d-d_0)},\\
&\overline{S}(j_2)\equiv \overline{S}(j_2;d,d_0,s)\equiv \prod_{\ell= -\floor{-j_2/(d-d_0)}+1}^s n_{k_0-\ell;k_0-1-s}^{d-d_0}  \times n_{k_0-(-\floor{-j_2/(d-d_0)});k_0-1-s}^{\Mod(-j_2;d-d_0)}.
\end{align*}
Then, there exists some positive constant $c = c(d)$ such that 
\begin{align*}
\frac{\overline{\beta}^s_{i,j_2}}{\overline{\beta}^s_{i,j_1}}\geq c\frac{ \prod_{\ell=1}^s n_{k_0-\ell;k_0-1-s}^{d-d_0} }{\underline{S}(j_1)\overline{S}(j_2)}.
\end{align*}
When $j_1 = j_2$, the product on the right hand side is to be understood as $1$. 
\end{lemma}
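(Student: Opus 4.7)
The plan is to first reduce the lemma to the asymptotic equivalence $\overline{\beta}_j^s \asymp_d \underline{S}(j)$, and then to establish this equivalence via an explicit monomial expansion of $\overline{\beta}_j^s$ in terms of the individual piece lengths.

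To set up the reduction, I would first use the factorization $\overline{\beta}_{i,j}^s = D(i,j)\overline{\beta}_j^s$: the ratio $D(i,j_2)/D(i,j_1)$ is a quotient of rising and falling factorials of indices bounded by $d+1$, hence $\gtrsim_d 1$. Then a direct algebraic check, splitting $j = q(d-d_0)+r$ into the cases $r = 0$ and $r > 0$, verifies the telescoping identity
\begin{equation*}
\underline{S}(j)\cdot \overline{S}(j) = \prod_{\ell=1}^s n_{k_0-\ell;k_0-1-s}^{d-d_0},
\end{equation*}
so that the right-hand side of the lemma simplifies to $\underline{S}(j_2)/\underline{S}(j_1)$. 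The lemma will therefore follow from the two-sided estimate $\overline{\beta}_j^s \asymp_d \underline{S}(j)$ with constants depending only on $d$.

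To prove the two-sided estimate, I would unroll the recursion \eqref{eq:beta_right} layer by layer. Observing that at level $k$ the multiplicative factor $n_{k_0-k;k_0-1-k}$ is precisely the $(k{+}1)$-th piece length $h_{k+1} \equiv (n_{k_0-k}-n_{k_0-1-k})/n$ counted from the right, one obtains the closed form
\begin{equation*}
\overline{\beta}_j^s \;=\; \sum_{(e_1,\ldots,e_s) \in \mathcal{P}(j,s)} \bigg(\prod_{k=1}^s \binom{k(d-d_0) - (e_1+\cdots+e_{k-1})}{e_k}\bigg) h_2^{e_1} h_3^{e_2} \cdots h_{s+1}^{e_s},
\end{equation*}
where $\mathcal{P}(j,s)$ is the set of nonnegative $s$-tuples of total sum $j$ with partial sums bounded by $e_1+\cdots+e_k \leq k(d-d_0)$. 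For the upper bound $\overline{\beta}_j^s \lesssim_d \underline{S}(j)$, I would use $h_{\ell+1} \leq m_\ell$ together with the monotonicity $m_1 \geq \ldots \geq m_s$ (a direct consequence of knot ordering) and the observation that every tuple in $\mathcal{P}(j,s)$ is majorized by the front-loaded tuple $(d-d_0,\ldots,d-d_0,r,0,\ldots,0)$; a standard rearrangement argument then bounds each monomial by $\underline{S}(j)$, and the finite number of such monomials (depending on $d$ alone) absorbs the remaining multiplicative constant. For the lower bound $\overline{\beta}_j^s \gtrsim_d \underline{S}(j)$, I would expand $\underline{S}(j) = \prod_{\ell=1}^q m_\ell^{d-d_0} \cdot m_{q+1}^r$ into monomials in the $h_k$'s, verify that every resulting monomial satisfies the partition constraint defining $\mathcal{P}(j,s)$ (so that it also appears in $\overline{\beta}_j^s$), and then compare the multinomial-type coefficients in the two expansions.

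The main obstacle is the last step of the lower bound: verifying, coefficient by coefficient, that each monomial in the expansion of $\underline{S}(j)$ is dominated up to constants depending only on $d$ by the corresponding coefficient in $\overline{\beta}_j^s$. Because both coefficients are products of binomials whose parameters are all bounded in terms of $d+1$, this ultimately reduces to a finite combinatorial check. Nevertheless, carrying out this check cleanly requires book-keeping of the bijection between $\mathcal{P}(j,s)$ and the ways of distributing the exponents $\{d-d_0,\ldots,d-d_0,r\}$ across the variables $h_k$, which is where the bulk of the work lies.
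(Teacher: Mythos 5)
Your proposal is correct, and it takes a genuinely different route from the paper. The paper only writes out the case $d_0 = d-1$ (asserting that the general case is analogous), and it proves the claim by induction on $s$: at each step it re-expands $\overline{\beta}_{i,j}^s/\overline{\beta}_{i,j_1}^s$ using the defining recursion, invokes the inductive hypothesis, and cancels terms via a max/min inequality (Lemma~\ref{lemma:wedge}). In contrast, you unroll the recursion \eqref{eq:beta_right} into a closed sum over tuples $(e_1,\ldots,e_s)$ with partial-sum constraints, reduce the lemma to the two-sided equivalence $\overline{\beta}_j^s \asymp_d \underline{S}(j)$ via the telescoping identity $\underline{S}(j)\overline{S}(j) = \prod_{\ell=1}^s n_{k_0-\ell;k_0-1-s}^{d-d_0}$ (which does hold, splitting on whether $d-d_0$ divides $j$) and the bound $D(i,j_2)/D(i,j_1)\gtrsim_d 1$, and then prove each direction of the equivalence directly from the monomial form. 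What your approach buys is explicitness and uniformity in $d_0$: the closed-form expansion makes the structure of $\overline{\beta}_j^s$ visible and handles all $d_0$ at once, whereas the paper's inductive argument is cleaner per step but is only carried out in the smoothest case. One small remark: the ``bulk of the work'' you anticipate in the lower bound is actually quite light. Since every monomial of $\underline{S}(j)=\prod_{\ell\le q}(\sum_{p\ge\ell}h_{p+1})^{d-d_0}\cdot(\sum_{p\ge q+1}h_{p+1})^r$ automatically satisfies the partial-sum constraints $e_1+\cdots+e_K\le K(d-d_0)$, its coefficient in $\overline{\beta}_j^s$ is a product of binomials each $\ge 1$; and the coefficient of the same monomial in the expansion of $\underline{S}(j)$ is a (finite sum of products of) multinomial coefficients with all parameters at most $d+1$, hence $\le C(d)$. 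Nonnegativity of the $h_k$'s then gives $\underline{S}(j)\le C(d)\,\overline{\beta}_j^s$ with no further case-by-case matching. The upper bound via $h_{\ell+1}\le m_\ell$, monotonicity of $\{m_\ell\}$, and a rearrangement/Abel-summation step to compare against the front-loaded exponent tuple is also correct as stated.
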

\begin{proof}
We only prove the special case $d_0 = d-1$ (the proof for the general case is completely analogous). Then $k_0 = d+2$, and 
\begin{align*}
\underline{S}(j_1)= \prod_{\ell=1}^{j_1} n_{d+2-\ell;d+1-s},\quad \overline{S}(j_2)= \prod_{\ell=j_2+1}^s n_{d+2-\ell;d+1-s},
\end{align*}
so we only need to prove for $s\in[1;d]$, $i\in[1;d+1-s]$, and $0\leq j_1\leq j_2\leq s$,
\begin{align*}
\frac{\overline{\beta}^s_{i,j_2}}{\overline{\beta}^s_{i,j_1}}\geq c\prod_{k=j_1+1}^{j_2}n_{d+2-k; d+1-s}.
\end{align*}
We prove this by induction on $s$. 

First consider $s=1$. Then $\overline{\beta}_j^1 =  n_{d+1;d}^j$, and $\overline{\beta}_{i,j}^1 = D(i,j)\overline{\beta}_j^1 \asymp n_{d+1;d}^j$. The only non-trivial case is $j_1=0,j_2=1$, so the claim follows.

Suppose the claim holds up to $s - 1$. Fix any $1\leq j_1\leq j_2\leq s$. The claim clearly holds for $j_1 = j_2 = s$. If $j_2 = s$ and $j_1\leq s-1$, then it holds by the recursion formula of $\{\overline{\beta}^s_j\}_{j=0}^s$ in (\ref{eq:beta_right}) that $\overline{\beta}^s_{i,s}/\overline{\beta}^s_{i,j_1} \asymp n_{d+2-s;d+1-s}\overline{\beta}^s_{s-1}/\overline{\beta}^s_{j_1}$, and we can reduce to the following case with $1\leq j_1\leq j_2\leq s-1$. For this case, note that
\begin{align*}
&\prod_{k = j_1+1}^{j_2}n_{d+2-k; d+1-s} = \prod_{k = j_1+1}^{j_2}(n_{d+2-k; d+2-s} + n_{d+2-s;d+1-s})\\
&= \sum_{k=0}^{j_2-j_1}n_{d+2-s;d+1-s}^k\sum_{j_1+1\leq m_1\neq\ldots\neq m_{j_2-j_1-k }\leq j_2} n_{d+2-m_1;d+2-s}\ldots n_{d+2-m_k; d+2-s}\\
&\asymp \bigvee_{k=0}^{j_2-j_1} \bigg\{ n_{d+2-s;d+1-s}^k \prod_{m = 1}^{j_2-j_1-k}n_{d+2-{j_1+m};d+2-s }\bigg\}.
\end{align*}
Treating the above display as a polynomial of $n_{d+2-s;d+1-s}$, it suffices to match the corresponding coefficients of $n_{d+2-s;d+1-s}^k$ for $k\in[0; j_2-j_1]$ in $\overline{\beta}^s_{i,j_2}/\overline{\beta}^s_{i,j_1}$. To this end, we have 
\begin{align*}
\frac{\overline{\beta}^s_{i,j_2}}{\overline{\beta}^s_{i,j_1}} &\asymp  \frac{\bigvee_{\ell=0}^{j_2} n_{d+2-s;d+1-s}^{j_2-\ell} \overline{\beta}_\ell^{s-1} }{ \bigvee_{\ell=0}^{j_1} n_{d+2-s;d+1-s}^{j_1} \overline{\beta}_{\ell}^{s-1} } \asymp \bigvee_{k=0}^{j_2-j_1}  \frac{\bigvee_{\ell=0}^{j_1} n_{d+2-s;d+1-s}^{j_1+k-\ell} \overline{\beta}_{j_2-j_1-k+\ell}^{s-1} }{ \bigvee_{\ell=0}^{j_1} n_{d+2-s;d+1-s}^{j_1-\ell} \overline{\beta}_\ell^{s-1} }  \\
& \geq \bigvee_{k=0}^{j_2-j_1} \bigg\{ n_{d+2-s;d+1-s}^k \bigwedge_{\ell=0}^{j_1}  \frac{\overline{\beta}_{j_2-j_1-k+\ell}^{s-1}}{\overline{\beta}_\ell^{s-1}} \bigg\} \quad (\textrm{by Lemma \ref{lemma:wedge}})\\
&\gtrsim \bigvee_{k=0}^{j_2-j_1} \bigg\{ n_{d+2-s;d+1-s}^k \bigwedge_{\ell=0}^{j_1}  \prod_{m=\ell+1}^{j_2-j_1-k+\ell} n_{d+2-m;d+2-s}  \bigg\} \quad (\textrm{by induction})\\
& = \bigvee_{k=0}^{j_2-j_1} \bigg\{ n_{d+2-s;d+1-s}^k  \prod_{m=j_1+1}^{j_2-k} n_{d+2-m;d+2-s}  \bigg\}\quad (\textrm{minimum at }\ell=j_1),
\end{align*}
matching the calculation in the previous display, completing the proof.
\end{proof}


\bibliographystyle{alpha}
\bibliography{mybib}

\newcommand{\etalchar}[1]{$^{#1}$}
\begin{thebibliography}{FKLW08}

\bibitem[AL89]{auger1989algorithms}
Ivan~E. Auger and Charles~E. Lawrence.
\newblock Algorithms for the optimal identification of segment neighborhoods.
\newblock {\em Bull. Math. Biol.}, 51(1):39--54, 1989.

\bibitem[BBM99]{barron1999risk}
Andrew Barron, Lucien Birg{\'e}, and Pascal Massart.
\newblock Risk bounds for model selection via penalization.
\newblock {\em Probab. Theory Related Fields}, 113(3):301--413, 1999.

\bibitem[BCF19]{baranowski2019narrowest}
Rafal Baranowski, Yining Chen, and Piotr Fryzlewicz.
\newblock Narrowest-over-threshold detection of multiple change points and
  change-point-like features.
\newblock {\em J. R. Stat. Soc. Ser. B. Stat. Methodol.}, 81(3):649--672, 2019.

\bibitem[Bel18]{bellec2018sharp}
Pierre~C. Bellec.
\newblock Sharp oracle inequalities for {L}east {S}quares estimators in shape
  restricted regression.
\newblock {\em Ann. Statist.}, 46(2):745--780, 2018.

\bibitem[BKL{\etalchar{+}}09]{boysen2009consistencies}
Leif Boysen, Angela Kempe, Volkmar Liebscher, Axel Munk, and Olaf Wittich.
\newblock Consistencies and rates of convergence of jump-penalized least
  squares estimators.
\newblock {\em Ann. Statist.}, 37(1):157--183, 2009.

\bibitem[BM93]{birge1993rates}
Lucien Birg{\'e} and Pascal Massart.
\newblock Rates of convergence for minimum contrast estimators.
\newblock {\em Probab. Theory Related Fields}, 97(1-2):113--150, 1993.

\bibitem[BM01]{birge2001gaussian}
Lucien Birg{\'e} and Pascal Massart.
\newblock Gaussian model selection.
\newblock {\em J. Eur. Math. Soc. (JEMS)}, 3(3):203--268, 2001.

\bibitem[BW07]{balabdaoui2007estimation}
Fadoua Balabdaoui and Jon~A. Wellner.
\newblock Estimation of a {$k$}-monotone density: limit distribution theory and
  the spline connection.
\newblock {\em Ann. Statist.}, 35(6):2536--2564, 2007.

\bibitem[CDS01]{chen2001atomic}
Scott~Shaobing Chen, David~L. Donoho, and Michael~A. Saunders.
\newblock Atomic decomposition by basis pursuit.
\newblock {\em SIAM Rev.}, 43(1):129--159, 2001.
\newblock Reprinted from SIAM J. Sci. Comput. {{\bf{2}}0} (1998), no. 1, 33--61
  (electronic).

\bibitem[CGS15]{chatterjee2015risk}
Sabyasachi Chatterjee, Adityanand Guntuboyina, and Bodhisattva Sen.
\newblock On risk bounds in isotonic and other shape restricted regression
  problems.
\newblock {\em Ann. Statist.}, 43(4):1774--1800, 2015.

\bibitem[dB78]{de1978practical}
Carl de~Boor.
\newblock {\em A practical guide to splines}, volume~27 of {\em Applied
  Mathematical Sciences}.
\newblock Springer-Verlag, New York-Berlin, 1978.

\bibitem[DHL17]{dalalyan2017prediction}
Arnak~S. Dalalyan, Mohamed Hebiri, and Johannes Lederer.
\newblock On the prediction performance of the {L}asso.
\newblock {\em Bernoulli}, 23(1):552--581, 2017.

\bibitem[DJ94]{donoho1994minimax}
David~L. Donoho and Iain~M. Johnstone.
\newblock Minimax risk over {$l_p$}-balls for {$l_q$}-error.
\newblock {\em Probab. Theory Related Fields}, 99(2):277--303, 1994.

\bibitem[DK01]{davis2001local}
P.~L. Davies and A.~Kovac.
\newblock Local extremes, runs, strings and multiresolution.
\newblock {\em Ann. Statist.}, 29(1):1--65, 2001.
\newblock With discussion and rejoinder by the authors.

\bibitem[dlPG99]{de2012decoupling}
V{\'{\i}}ctor~H. de~la Pe{\~n}a and Evarist Gin{\'e}.
\newblock {\em Decoupling}.
\newblock Probability and its Applications (New York). Springer-Verlag, New
  York, 1999.
\newblock From dependence to independence, Randomly stopped processes.
  $U$-statistics and processes. Martingales and beyond.

\bibitem[FG18]{fan2018approximate}
Zhou Fan and Leying Guan.
\newblock Approximate {$\ell_0$}-penalized estimation of piecewise-constant
  signals on graphs.
\newblock {\em Ann. Statist.}, 46(6B):3217--3245, 2018.

\bibitem[FKLW08]{friedrich2008complexity}
F.~Friedrich, A.~Kempe, V.~Liebscher, and G.~Winkler.
\newblock Complexity penalized {$M$}-estimation: fast computation.
\newblock {\em J. Comput. Graph. Statist.}, 17(1):201--224, 2008.

\bibitem[FML19]{fearnhead2019detecting}
Paul Fearnhead, Robert Maidstone, and Adam Letchford.
\newblock Detecting changes in slope with an {$L_0$} penalty.
\newblock {\em J. Comput. Graph. Statist.}, 28(2):265--275, 2019.

\bibitem[GHZ20]{gao2017minimax}
Chao Gao, Fang Han, and Cun-Hui Zhang.
\newblock On estimation of isotonic piecewise constant signals.
\newblock {\em Ann. Statist. (to appear). Available at arXiv:1705.06386},
  2020+.

\bibitem[GLCS20]{guntuboyina2020adaptive}
Adityanand Guntuboyina, Donovan Lieu, Sabyasachi Chatterjee, and Bodhisattva
  Sen.
\newblock Adaptive risk bounds in univariate total variation denoising and
  trend filtering.
\newblock {\em Ann. Statist.}, 48(1):205--229, 2020.

\bibitem[GS94]{green1993nonparametric}
P.~J. Green and B.~W. Silverman.
\newblock {\em Nonparametric regression and generalized linear models},
  volume~58 of {\em Monographs on Statistics and Applied Probability}.
\newblock Chapman \& Hall, London, 1994.
\newblock A roughness penalty approach.

\bibitem[GS15]{guntuboyina2013global}
Adityanand Guntuboyina and Bodhisattva Sen.
\newblock Global risk bounds and adaptation in univariate convex regression.
\newblock {\em Probab. Theory Related Fields}, 163(1-2):379--411, 2015.

\bibitem[HLL10]{harchaoui2010multiple}
Z.~Harchaoui and C.~L\'{e}vy-Leduc.
\newblock Multiple change-point estimation with a total variation penalty.
\newblock {\em J. Amer. Statist. Assoc.}, 105(492):1480--1493, 2010.

\bibitem[HR16]{hutter2016optimal}
Jan-Christian H{\"u}tter and Philippe Rigollet.
\newblock Optimal rates for total variation denoising.
\newblock In {\em Conference on Learning Theory}, pages 1115--1146, 2016.

\bibitem[JSB{\etalchar{+}}05]{jackson2005algorithm}
Brad Jackson, Jeffrey~D Scargle, David Barnes, Sundararajan Arabhi, Alina Alt,
  Peter Gioumousis, Elyus Gwin, Paungkaew Sangtrakulcharoen, Linda Tan, and
  Tun~Tao Tsai.
\newblock An algorithm for optimal partitioning of data on an interval.
\newblock {\em IEEE Signal Processing Letters}, 12(2):105--108, 2005.

\bibitem[JW18]{jewell2018exact}
Sean Jewell and Daniela Witten.
\newblock Exact spike train inference via {$\ell_0$} optimization.
\newblock {\em Ann. Appl. Stat.}, 12(4):2457--2482, 2018.

\bibitem[KFE12]{killick2012optimal}
R.~Killick, P.~Fearnhead, and I.~A. Eckley.
\newblock Optimal detection of changepoints with a linear computational cost.
\newblock {\em J. Amer. Statist. Assoc.}, 107(500):1590--1598, 2012.

\bibitem[KKBG09]{kim2009ell_1}
Seung-Jean Kim, Kwangmoo Koh, Stephen Boyd, and Dimitry Gorinevsky.
\newblock {$l_1$} trend filtering.
\newblock {\em SIAM Rev.}, 51(2):339--360, 2009.

\bibitem[Koh99]{kohler1999nonparametric}
Michael Kohler.
\newblock Nonparametric estimation of piecewise smooth regression functions.
\newblock {\em Statist. Probab. Lett.}, 43(1):49--55, 1999.

\bibitem[Mas07]{massart2007concentration}
Pascal Massart.
\newblock {\em Concentration inequalities and model selection}, volume 1896 of
  {\em Lecture Notes in Mathematics}.
\newblock Springer, Berlin, 2007.
\newblock Lectures from the 33rd Summer School on Probability Theory held in
  Saint-Flour, July 6--23, 2003, With a foreword by Jean Picard.

\bibitem[MHRF17]{maidstone2017optimal}
Robert Maidstone, Toby Hocking, Guillem Rigaill, and Paul Fearnhead.
\newblock On optimal multiple changepoint algorithms for large data.
\newblock {\em Stat. Comput.}, 27(2):519--533, 2017.

\bibitem[MvdG97]{mammen1997locally}
Enno Mammen and Sara van~de Geer.
\newblock Locally adaptive regression splines.
\newblock {\em Ann. Statist.}, 25(1):387--413, 1997.

\bibitem[Nat95]{natarajan1995sparse}
B.~K. Natarajan.
\newblock Sparse approximate solutions to linear systems.
\newblock {\em SIAM J. Comput.}, 24(2):227--234, 1995.

\bibitem[Rin09]{rinaldo2009properties}
Alessandro Rinaldo.
\newblock Properties and refinements of the fused lasso.
\newblock {\em Ann. Statist.}, 37(5B):2922--2952, 2009.

\bibitem[Roc97]{rockafellar1997convex}
R.~Tyrrell Rockafellar.
\newblock {\em Convex {A}nalysis}.
\newblock Princeton Landmarks in Mathematics. Princeton University Press,
  Princeton, NJ, 1997.
\newblock Reprint of the 1970 original, Princeton Paperbacks.

\bibitem[ROF92]{rudin1992nonlinear}
Leonid~I. Rudin, Stanley Osher, and Emad Fatemi.
\newblock Nonlinear total variation based noise removal algorithms.
\newblock volume~60, pages 259--268. 1992.
\newblock Experimental mathematics: computational issues in nonlinear science
  (Los Alamos, NM, 1991).

\bibitem[RWY11]{raskutti2011minimax}
Garvesh Raskutti, Martin~J. Wainwright, and Bin Yu.
\newblock Minimax rates of estimation for high-dimensional linear regression
  over {$\ell_q$}-balls.
\newblock {\em IEEE Trans. Inform. Theory}, 57(10):6976--6994, 2011.

\bibitem[SDN06]{steidl2006splines}
Gabriele Steidl, Stephan Didas, and Julia Neumann.
\newblock Splines in higher order tv regularization.
\newblock {\em International Journal of Computer Vision}, 70(3):241--255, 2006.

\bibitem[Tib14]{tibshirani2014adaptive}
Ryan~J. Tibshirani.
\newblock Adaptive piecewise polynomial estimation via trend filtering.
\newblock {\em Ann. Statist.}, 42(1):285--323, 2014.

\bibitem[TSR{\etalchar{+}}05]{tibshirani2005sparsity}
Robert Tibshirani, Michael Saunders, Saharon Rosset, Ji~Zhu, and Keith Knight.
\newblock Sparsity and smoothness via the fused lasso.
\newblock {\em J. R. Stat. Soc. Ser. B Stat. Methodol.}, 67(1):91--108, 2005.

\bibitem[Tsy09]{tsybakov2008introduction}
Alexandre~B. Tsybakov.
\newblock {\em Introduction to {N}onparametric {E}stimation}.
\newblock Springer Series in Statistics. Springer, New York, 2009.
\newblock Revised and extended from the 2004 French original, Translated by
  Vladimir Zaiats.

\bibitem[vdG18]{van2018tight}
Sara van~de Geer.
\newblock On tight bounds for the {L}asso.
\newblock {\em J. Mach. Learn. Res.}, 19:Paper No. 46, 48, 2018.

\bibitem[vL99]{van1999introduction}
J.~H. van Lint.
\newblock {\em Introduction to coding theory}, volume~86 of {\em Graduate Texts
  in Mathematics}.
\newblock Springer-Verlag, Berlin, third edition, 1999.

\bibitem[Wah90]{wahba1990spline}
Grace Wahba.
\newblock {\em Spline models for observational data}, volume~59 of {\em
  CBMS-NSF Regional Conference Series in Applied Mathematics}.
\newblock Society for Industrial and Applied Mathematics (SIAM), Philadelphia,
  PA, 1990.

\bibitem[WL02]{winkler2002smoothers}
G.~Winkler and V.~Liebscher.
\newblock Smoothers for discontinuous signals.
\newblock volume~14, pages 203--222. 2002.
\newblock Statistical models and methods for discontinuous phenomena (Oslo,
  1998).

\bibitem[WST14]{wang2014falling}
Yu-Xiang Wang, Alex Smola, and Ryan Tibshirani.
\newblock The falling factorial basis and its statistical applications.
\newblock In {\em International Conference on Machine Learning}, pages
  730--738, 2014.

\bibitem[YZ10]{ye2010rate}
Fei Ye and Cun-Hui Zhang.
\newblock Rate minimaxity of the {L}asso and {D}antzig selector for the
  {$\ell_q$} loss in {$\ell_r$} balls.
\newblock {\em J. Mach. Learn. Res.}, 11:3519--3540, 2010.

\bibitem[Zha02]{zhang2002risk}
Cun-Hui Zhang.
\newblock Risk bounds in isotonic regression.
\newblock {\em Ann. Statist.}, 30(2):528--555, 2002.

\end{thebibliography}

\end{document}